\newcommand{\Sp}{\mathbb{S}}
\newcommand{\VMO}{\text{VMO}}
\newcommand{\Sl}{\R / L\Z}
\newcommand{\Sc}{\R / \Z}
\newcommand{\diffsn}[1]{\left|s_{n,#1+1}-s_{n,#1}\right|}
\newcommand{\sn}[1]{s_{n,#1}}
\newcommand{\gsn}[1]{\gamma\left(s_{n,#1}\right)}
\newcommand{\gstrichsn}[1]{\gamma'\left(s_{n,#1}\right)}
\newcommand{\distg}[2]{\dist\left(l\left(#2\right),\gamma\left(#1\right)\right)}
\newcommand{\diffgamma}[2]{\left|\gamma\left(#1\right)-\gamma\left(#2\right)\right|}
\newcommand{\E}{\mathcal{E}}
\newcommand{\beq}[1][ ]{\stackrel{\mathmakebox[\widthof{(1.1)}]{#1}}=}
\newcommand{\bleq}[1][ ]{\stackrel{\mathmakebox[\widthof{(1.1)}]{#1}}\leq}
\newcommand{\bgeq}[1][ ]{\stackrel{\mathmakebox[\widthof{(1.1)}]{#1}}\geq}
\newcommand{\ble}[1][ ]{\stackrel{\mathmakebox[\widthof{(1.1)}]{#1}}<}
\newcommand{\wg}{\omega_{\gamma'}}
\newcommand{\rope}{\mathcal{R}}
\numberwithin{equation}{section}
\newtheorem{theorem}{Theorem}[section]
\newtheorem{proposition}[theorem]{Proposition}
\newtheorem{lemma}[theorem]{Lemma}
\newtheorem{corollary}[theorem]{Corollary}
\newtheorem{definition}[theorem]{Definition}
\newtheorem{remark}[theorem]{Remark}
\newtheorem*{theorem*}{Theorem}
\DeclareMathOperator{\supp}{supp}
\DeclareMathOperator{\dist}{dist}
\newcommand{\B}{\mathcal{B}}
\newcommand{\RL}{\mathcal{R}}
\newcommand{\BL}{\mathcal{B}}
\newcommand{\EL}{\mathcal{E}}
\newcommand{\KL}{\mathcal{K}}
\newcommand\Id{{{\rm Id}}}
 \renewcommand\S{{\mathbb S}}
\newcommand{\omitted}[1]{}
\newcommand{\Fo}{\,\,\,\text{for }\,\,}
\newcommand{\Foa}{\,\,\,\text{for all }\,\,}
\newcommand{\AND}{\,\,\,\text{and }\,\,}
\newcommand{\heikodetail}[1]{}
\newcommand*\patchAmsMathEnvironmentForLineno[1]{%
\expandafter\let\csname old#1\expandafter\endcsname\csname #1\endcsname
\expandafter\let\csname oldend#1\expandafter\endcsname\csname end#1\endcsname
\renewenvironment{#1}%
{\linenomath\csname old#1\endcsname}%
{\csname oldend#1\endcsname\endlinenomath}}%
\newcommand*\patchBothAmsMathEnvironmentsForLineno[1]{%
\patchAmsMathEnvironmentForLineno{#1}%
\patchAmsMathEnvironmentForLineno{#1*}}%
\renewcommand{\d}{\ensuremath{\,\mathrm{d}}}
\newcommand{\eps}{\ensuremath{\varepsilon}}
\newcommand{\F}[1][\eps]{F}%\ensuremath{F_{#1}}}
\newcommand{\g}{\ensuremath{\gamma}}
\newcommand{\N}{\ensuremath{\mathbb{N}}}
\newcommand{\R}{\ensuremath{\mathbb{R}}}
\renewcommand{\rho}{\ensuremath{\varrho}}
\newcommand{\TP}{\mathrm{TP}}
\newcommand{\Z}{\ensuremath{\mathbb{Z}}}
\title[Gamma-limit of discrete tangent-point energies]{Tangent-point 
energies and ropelength as Gamma-limit of discrete 
tangent-point energies on biarc curves}
\author{Anna Lagemann}
\address[A.~Lagemann]{
\newline%
RWTH Aachen University,\newline%
Institut f\"ur Mathematik,\newline%
Templergraben 55,
52062 Aachen,
Germany}
\email{lagemann@eddy.rwth-aachen.de}%
\author{Heiko von der Mosel}
\address[H.~von~der~Mosel]{
\newline%
RWTH Aachen University,\newline%
Institut f\"ur Mathematik,\newline%
Templergraben 55,
52062 Aachen,
Germany}
\email{heiko@instmath.rwth-aachen.de}
\keywords{Ropelength,  tangent-point energy,  discretization,
biarcs, Gamma-convergence}
\date{\today}
\DeclareRobustCommand{\SkipTocEntry}[5]{}
\begin{document}

%\maketitle

\begin{abstract}
Using interpolation with biarc curves we prove $\Gamma$-convergence of discretized tangent-point energies to the continuous tangent-point energies in the
  $C^1$-topology, as well as
  to the ropelength functional. As a consequence discrete almost minimizing
  biarc curves converge to ropelength minimizers, and to minimizers of the continuous
  tangent-point energies. In addition, taking point-tangent data
  from a given $C^{1,1}$-curve $\g$, 
  we establish convergence of the discrete
  energies evaluated on biarc curves interpolating these data,
  to the continuous  
  tangent-point energy of $\g$, together with an explicit convergence
  rate.
\end{abstract}

\maketitle

%\setcounter{tocdepth}1
%\tableofcontents

\section{Introduction}\label{sec:intro}
The ropelength\footnote{This name is coined after the
mathematical question, how long a thick rope has to be in order
to tie it into a knot.}
of a closed arclength parametrized
curve $\g:\R/L\Z\to\R^3$ is defined as the 
quotient of its length and thickness,
\begin{equation}\label{eq:ropelength}
\RL(\g):=\frac{\mathscr{L}(\g)}{\triangle[\g]}=\frac{L}{\triangle[\g]}.
\end{equation}
Here,  for variational considerations
the thickness $\triangle[\g]$ is  most conveniently expressed
following Gonzalez and Maddocks \cite{gonzalez-maddocks_1999} -- without
any regularity assumptions on the curve $\g$ -- as
\begin{equation}\label{eq:thickness}
\triangle[\g]:=\inf_{s\not=t\not=\tau\not=s}R(\g(s),\g(t),\g(\tau)),
\end{equation}
where $R(x,y,z)$ denotes the circumcircle radius of the three 
points $x,y,z\in\R^3$. 
Motivated by numerous applications in the Natural Sciences 
ropelength is used in numerical computations (see
\cite{carlen-etal_2005,ashton-etal_2011,cantarella-etal_2012b,klotz-maldonado_2021} and the references therein)
to mathematically model long and
slender objects such as strings or macromolecules that
do not self-intersect. 
In fact, it was proved rigorously in 
\cite{gonzalez-etal_2002,cantarella-etal_2002}  that a curve of 
finite ropelength is embedded and of class $C^{1,1}(\R/L\Z,\R^3)$,
which means that its curvature exists and is bounded a.e. on $\R/L\Z$. 
Moreover, a curve $\g$ with positive thickness $\triangle[\g]>0$
is surrounded by an embedded
tube with radius equal to $\triangle[\g]$ as shown in 
\cite[Lemma 3]{gonzalez-etal_2002}, which justifies the use of
the non-smooth quantity
$\triangle[\cdot]$ as steric excluded volume constraint.

The minimization 
over all triples of curve points 
to evaluate thickness in \eqref{eq:thickness} 
is costly, which lead to the idea
to replace minimization by integration; see 
\cite[p. 4773]{gonzalez-maddocks_1999}. One such integral
energy is the \emph{tangent-point energy}
\begin{equation}\label{eq:tan-point}
\TP_q(\g)
:=\iint_{(\R/L\Z)^2}\frac1{r^q_\textnormal{tp}(\g(s),\g(t))}\d s \d t,\quad
q\ge 2,
\end{equation}
where the circumcircle radius is now replaced by the
\emph{tangent-point radius} 
\begin{equation}\label{eq:tp-radius}
r_\textnormal{tp}(\g(s),\g(t))=
\frac{|\g(s)-\g(t)|^2}{2\dist(\g(s)+\R\g'(s),\g(t))},
\end{equation} 
i.e.,
the radius of the unique circle through the points $\g(s)$ and $\g(t)$
that is in addition tangent to the curve $\g$ at $\g(s)$.
Also this energy implies 
self-avoidance and has regularizing properties. It was
shown in \cite{strzelecki-vdm_2012} that if $\TP_q(\g)$ 
is finite for some
$q>2$, then
$\g$ is embedded and of class $C^{1,1-\frac2{q}}(\R/L\Z,\R^3)$. Later,
Blatt \cite{blatt_2013b} improved this regularity to the optimal 
fractional Sobolev\footnote{For the definition 
see Appendix \ref{proof_convolution_convergence}; a condensed
selection of
pertinent results regarding periodic fractional Sobolev
spaces can be found, e.g.,  in \cite[Appendix A]{knappmann-etal_2022}.}
regularity $W^{2-\frac1{q},q}(\R/L\Z,\R^3)$,
 which
actually characterizes curves of finite $\TP_q$-energy.
The knowledge of the exact energy space was then used to establish
continuous differentiability of the tangent-point energy 
\cite[Remark
3.1]{blatt-reiter_2015a}, \cite{wings_2018}, 
and to find $\TP_q$-critical knots by means of Palais's
symmetric criticality principle \cite{gilsbach_2018}. Very recently,
long-time existence for a suitably regularized gradient flow for
$\TP_q$ was shown via a minimizing movement scheme \cite{matt-etal_2022}.

But the tangent-point energy was also used in numerical simulations.
Bartels et al added a desingularized
variant of the $\TP_q$-energy in 
\cite{bartels-etal_2018,bartels-reiter_2021} as a self-avoidance term
to  the bending energy to find elastic knots. The impressive simulations
of Crane et al. in \cite{yu-etal_2021b} use the $\TP_q$-energy as well
to avoid self-intersections, a higher dimensional tangent-point
energy allows for computations on self-avoiding surfaces; see
\cite{yu-etal_2021a}.

In the present paper we address the mathematical question of
variational convergence of suitably discretized tangent-point energies
towards the continuous $\TP_q$-energy, as well as towards
ropelength. To account for the tangential information encoded in
the tangent-point radius in \eqref{eq:tan-point} 
on the discrete level we use \emph{biarcs}, i.e., pairs of circular arcs
as in \cite{smutny_2004,gonzalez-etal_2002b,carlen-etal_2005}, 
which on the one hand,
can interpolate
\emph{point-tangent data}
\begin{equation}\label{eq:point-tangent-data}
\big(\,\g(s_i),\g'(s_i)\,\big)\,\in\R^3\times\S^2\quad\Fo
i=1,\ldots,n
\end{equation}
of a given arclength parametrized
curve $\g\in C^1(\R/L\Z,\R^3)$.
Every \emph{biarc curve}
$\beta$ consisting of $n$ consecutive biarcs is therefore a
$C^{1,1}$-interpolant of the curve $\g$. On the other hand,
every biarc curve produces point-tangent data
\begin{equation}\label{eq:biarc-point-tangent-data}
(q_i,t_i)\in\R^3\times\S^2\quad\Fo i=1,\ldots,n,
\end{equation}
on its own, namely the
points $q_i$ and unit-tangents $t_i$ at every
junction of  two consecutive biarcs.
In order to avoid
degeneracies  we restrict
to those biarc curves $\beta$ whose biarcs have lengths $\lambda_i$
that are controlled in terms of the curve's length $\mathscr{L}(\g)$
by means of the inequality
\begin{equation}\label{eq:biarc-lengths}
\frac{\mathscr{L}(\g)}{2n}\le\lambda_i\le\frac{2\mathscr{L}(\g)}{n}\quad
\Fo i=0,\ldots,n-1.
\end{equation}
Let $\BL_n$ be the class of biarc curves $\beta$ satisfying
\eqref{eq:biarc-lengths}.
Accordingly,
we define in a parameter-invariant fashion
the \emph{discrete tangent-point energy} $\EL_q^n$ 
for $n\in\N$ and $q\in [2,\infty)$
on closed $C^1$-curves $\g$ as
\begin{align}\label{eq:discrete-tan-point}
\E_q^n(\gamma):=\begin{cases}
\displaystyle \sum_{i=0}^{n-1} \sum_{j=0,j\neq i}^{n-1} 
\left(\frac{2\;\dist(l(q_j),q_i)}{|q_i-q_j|^2}\right)^q \lambda_i \lambda_j, & \text{if }\gamma \in \mathcal{B}_n\\
\infty, & \, \text{otherwise}
\end{cases}
\end{align}
with  the straight lines
$l(q_i):=q_i + \R t_i $ for $i=0, ..., n-1$.
Notice that both $\TP_q$ and $\E_q^n$ are invariant
under reparametrization of the curves, and
they have the same scaling behaviour,
\begin{equation}\label{scalings}
\TP_q(d\g)=d^{2-q}\TP_q(\g)\AND 
\E_q^n(d\g)=d^{2-q}\E_q^n(\g)\quad\Foa d>0.
\end{equation}
We restrict to \underline{i}njective $C^1$-curves 
that are parametrized by 
\underline{a}rclength, denoted as the subset $C^1_\textnormal{ia}$ to
state our main results. 
\begin{theorem}[$\Gamma$-convergence to tangent-point energy]
\label{gamma_convergence}
For $q>2$ and $L>0$
the discrete tangent-point energies $\EL^n_q$ $\Gamma$-converge
to the tangent-point energy $\TP_q$ on the space
$C^1_\textnormal{ia}(\R/L\Z,\R^3)$
with respect to the $\|\cdot\|_{C^1}$-norm as $n\to\infty$, i.e., 
\begin{align}\label{eq:gamma-convergence-tp}
\E_q^n \stackrel{\Gamma}{\underset{n \to \infty}{\longrightarrow}} 
\TP_q \quad \text{on } 
\left(C_\textnormal{ia}^1\left(\R/L\Z, \R^3 \right), \|\cdot \|_{C^1}\right).
\end{align}
\end{theorem}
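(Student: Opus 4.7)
The plan is to verify the two standard conditions for $\Gamma$-convergence separately: the liminf inequality
\[
\liminf_{n\to\infty}\E_q^n(\g_n)\geq\TP_q(\g)\quad\text{whenever }\g_n\to\g\text{ in }C^1,
\]
and the existence of a recovery sequence $\g_n\to\g$ in $C^1$ with $\limsup_{n\to\infty}\E_q^n(\g_n)\leq\TP_q(\g)$. The central observation is that for $\beta\in\BL_n$ with junction point--tangent data $(q_i,t_i)$ and biarc lengths $\lambda_i$, the discrete density $2\dist(l(q_j),q_i)/|q_i-q_j|^2$ equals $1/r_\textnormal{tp}(q_j,q_i)$ in the sense of \eqref{eq:tp-radius}, so
\[
\E_q^n(\beta)=\sum_{i\neq j}\frac{\lambda_i\lambda_j}{r_\textnormal{tp}(q_j,q_i)^q}
\]
is a Riemann sum for $\TP_q$ on the partition of $\R/L\Z\times\R/L\Z$ into cells of area $\lambda_i\lambda_j$. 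Both proofs will exploit this identification.

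\textbf{Recovery sequence.} If $\TP_q(\g)=\infty$ any $C^1$-close sequence works, so assume $\TP_q(\g)<\infty$. By Blatt's characterization we have $\g\in W^{2-1/q,q}(\R/L\Z,\R^3)\hookrightarrow C^{1,\alpha}$ for some $\alpha>0$, hence in particular a uniform curvature bound a.e. I would take the biarc interpolant $\beta_n$ of the equispaced point--tangent data $(\g(s_{n,i}),\g'(s_{n,i}))$ with $s_{n,i}=iL/n$. Quantitative biarc-interpolation estimates in the spirit of \cite{smutny_2004} should yield $\beta_n\to\g$ in $C^1$ together with uniform bounds on $\|\kappa_{\beta_n}\|_{L^\infty}$ and on the tangent-point radii of $\beta_n$ inherited from those of $\g$. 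Since $(s,t)\mapsto 1/r_\textnormal{tp}(\g(s),\g(t))^q$ is integrable and continuous off the diagonal, dominated convergence forces the Riemann sum $\E_q^n(\beta_n)$ to converge to $\TP_q(\g)$, with the integrable majorant near the diagonal supplied by the uniform curvature bound.

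\textbf{Liminf inequality.} Assume $\g_n\to\g$ in $C^1$ with $\liminf_n\E_q^n(\g_n)<\infty$. Along a subsequence $\g_n\in\BL_n$, with junction data $(q_i^n,t_i^n)$ at parameters $s_i^n$ and lengths $\lambda_i^n\in[L_n/(2n),2L_n/n]$, where $L_n=\mathscr{L}(\g_n)\to L$ by $C^1$-convergence. Define the step density
\[
\rho_n(s,t):=\frac{\one_{\{i_n(s)\neq j_n(t)\}}}{r_\textnormal{tp}(q_{j_n(t)}^n,q_{i_n(s)}^n)^q},
\]
where $i_n(s),j_n(t)$ index the cells containing $s$ and $t$, so that $\E_q^n(\g_n)=\iintrz\rho_n\,\d s\,\d t$ (after rescaling). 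The cell sizes are $\mathcal O(1/n)$, and $C^1$-convergence yields $q_{i_n(s)}^n\to\g(s)$ and $t_{i_n(s)}^n\to\g'(s)$ pointwise for every $s$; hence $\rho_n(s,t)\to r_\textnormal{tp}(\g(s),\g(t))^{-q}$ almost everywhere on $(\R/L\Z)^2$. Fatou's lemma then delivers $\liminf_n\E_q^n(\g_n)\geq\TP_q(\g)$.

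\textbf{Main obstacle.} The principal difficulty lies in the near-diagonal regime. In the liminf step nonnegativity lets Fatou absorb the singular part painlessly; but the recovery step really does need quantitative control on the contribution of index pairs with $|i-j|$ small, where the tangent-point density of $\beta_n$ must be shown not to overshoot that of $\g$ in the limit. This in turn requires estimates of the form $r_\textnormal{tp}(q_j,q_i)\gtrsim 1/\sup\|\kappa_{\beta_n}\|_{L^\infty}$ for close-by junctions, coming from quantitative $C^1$-interpolation error bounds for biarcs on $C^{1,\alpha}$-data and from the non-degeneracy constraint \eqref{eq:biarc-lengths}. A secondary issue is the pointwise identification $t_{i_n(s)}^n\to\g'(s)$: since the tangent at a junction of two biarcs of $\beta_n$ need not coincide with $\g_n'(s_{i}^n)$ for parametrizations other than arclength, the $C^1$-convergence $\g_n\to\g$ must be coupled with the length control on the $\lambda_i^n$ to propagate from $\g_n$ to the junction tangents, which is exactly where \eqref{eq:biarc-lengths} enters.
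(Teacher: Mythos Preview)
Your liminf argument is essentially the paper's: pass to a subsequence with $\g_{n_k}\in\BL_{n_k}$, build the piecewise constant density on the cells, observe pointwise convergence off the diagonal via $C^1$-convergence and the $\mathcal O(1/n)$ cell size, and apply Fatou. That part is fine.

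The limsup argument has a genuine gap. You assert that $\g\in W^{2-1/q,q}\hookrightarrow C^{1,\alpha}$ gives ``in particular a uniform curvature bound a.e.'' It does not: Morrey's embedding only yields $\g\in C^{1,1-2/q}$, and for finite $q>2$ this is strictly weaker than $C^{1,1}$, so $\|\g''\|_{L^\infty}$ need not be finite. Every subsequent step of your recovery construction leans on this nonexistent bound: the near-diagonal majorant you want for dominated convergence, the estimate $r_\textnormal{tp}(q_j,q_i)\gtrsim 1/\|\kappa_{\beta_n}\|_{L^\infty}$, and even the biarc-interpolation error bounds from \cite{smutny_2004}, which in the form you need (Lemma~\ref{Konv_max_bruch_lamda}, Theorem~\ref{C1_Konvergenz}, and the length estimate \cite[Lemma 5.8]{smutny_2004}) all require $C^{1,1}$ data. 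Likewise the quantitative convergence $\tilde{\E}_q^n(\beta_n)\to\TP_q(\g)$ of Theorem~\ref{konv_ordnung_energien} uses $K=\|\g''\|_{L^\infty}$ throughout. So directly interpolating $\g$ does not work.

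The paper's remedy is to insert an intermediate smoothing layer: mollify $\g$ to $\tilde\g_k\in C^\infty_\textnormal{ia}$, rescaled and reparametrized to length $L$ and arclength, and show $\tilde\g_k\to\g$ in $W^{2-1/q,q}$ (Theorem~\ref{konv_faltung}). Since each $\tilde\g_k$ is $C^{1,1}$, Theorem~\ref{konv_ordnung_energien} applies and gives biarc curves $\beta_n^k\in\BL_n$ with $\beta_n^k\to\tilde\g_k$ in $C^1$ and $\E_q^n(\beta_n^k)\to\TP_q(\tilde\g_k)$. Continuity of $\TP_q$ on $W^{2-1/q,q}_\textnormal{ia}$ then yields $\TP_q(\tilde\g_k)\to\TP_q(\g)$, and a diagonal extraction (the abstract Lemma~\ref{limsup_approx}) produces the recovery sequence for $\g$. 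The missing idea in your proposal is precisely this mollification-plus-diagonal step, which is what bridges the regularity gap between $W^{2-1/q,q}$ and the $C^{1,1}$ hypothesis needed for the quantitative biarc estimates.
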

As an immediate consequence we infer the convergence
of almost minimizers in a given knot class $\KL$
of the discrete energies $\EL^n_q$ to 
a minimizer of the continuous tangent-point energy $\TP_q$
in the same knot class $\KL$.
\begin{corollary}[Convergence of discrete almost minimizers]
\label{cor:convergence-mini-tp}
Let $q>2$, $L>0$,  and $\mathcal{K}$ be a tame knot class and 
$b_n \in \mathcal{C}^*:=C_\textnormal{ia}^1\left(\R/L\Z, \R^3 \right)\cap\mathcal{K}
$ with
\begin{align*}
\big|\inf_{\mathcal{C}^*} \E_q^n - \E_q^n\left(b_n\right) \big| 
\to 0 \quad \text{and} \quad \| b_n - 
\gamma\|_{C^1} \to 0 \,\,\text{ as } n \to \infty.
\end{align*}
Then $\gamma$ is a minimizer of $\TP_q$ in $\mathcal{C}^*$
and $\lim_{n \to \infty} \E_q^n(b_n)=
\TP_q(\gamma)$.
Furthermore, it holds that $\gamma \in W^{2-\frac{1}{q},q}
(\R/L\Z,\R^3)$.
\end{corollary}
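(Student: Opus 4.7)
The plan is to run the standard $\Gamma$-convergence argument: combine the liminf inequality applied to $b_n$ with a recovery sequence for an arbitrary competitor. First, I would fix any $\tilde\gamma\in\mathcal{C}^*$ and invoke Theorem \ref{gamma_convergence} to produce a recovery sequence $(\tilde\gamma_n)$ with $\tilde\gamma_n\to\tilde\gamma$ in $C^1$ and
\[
\limsup_{n\to\infty}\E_q^n(\tilde\gamma_n)\le\TP_q(\tilde\gamma).
\]
Before using this sequence to test against $b_n$ I have to verify $\tilde\gamma_n\in\mathcal{C}^*$. Finite $\TP_q$-energy forces embeddedness of $\tilde\gamma$ with a positive-radius tube \cite{strzelecki-vdm_2012,blatt_2013b}, so $C^1$-closeness places $\tilde\gamma_n$ inside this tube and transversally to its normal disks for large $n$, yielding an ambient isotopy between $\tilde\gamma_n$ and $\tilde\gamma$; hence $\tilde\gamma_n\in\mathcal{K}$ eventually. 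The almost-minimality hypothesis then gives
\[
\E_q^n(b_n)\le\inf_{\mathcal{C}^*}\E_q^n+o(1)\le\E_q^n(\tilde\gamma_n)+o(1).
\]

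Next I would apply the liminf part of Theorem \ref{gamma_convergence} to the $C^1$-convergent sequence $b_n\to\gamma$, producing $\TP_q(\gamma)\le\liminf_{n\to\infty}\E_q^n(b_n)$. Chaining the two estimates,
\[
\TP_q(\gamma)\le\liminf_{n\to\infty}\E_q^n(b_n)\le\limsup_{n\to\infty}\E_q^n(\tilde\gamma_n)\le\TP_q(\tilde\gamma),
\]
so in particular $\TP_q(\gamma)<\infty$. Applying the same tube/isotopy argument now to $\gamma$, using $b_n\in\mathcal{K}$ and $b_n\to\gamma$ in $C^0$, yields $\gamma\in\mathcal{K}$ and hence $\gamma\in\mathcal{C}^*$. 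The displayed chain then shows that $\gamma$ minimizes $\TP_q$ over $\mathcal{C}^*$. Specialising to $\tilde\gamma=\gamma$ pinches $\lim_{n\to\infty}\E_q^n(b_n)=\TP_q(\gamma)$. The regularity statement $\gamma\in W^{2-1/q,q}(\R/L\Z,\R^3)$ then follows directly from Blatt's characterization \cite{blatt_2013b} of the $\TP_q$-energy space for $q>2$.

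The only non-routine step is the topological bookkeeping: ensuring that both the recovery curves $\tilde\gamma_n$ and the candidate minimizer $\gamma$ actually lie in the knot class $\mathcal{K}$. Both reductions rest on one and the same mechanism, namely propagating $C^1$-closeness into ambient isotopy via the embedded tube around a curve of positive thickness. This is where the assumption $q>2$ enters, through finiteness of $\TP_q$ forcing positive thickness, together with the tameness of $\mathcal{K}$; once it is in place, the remaining inequalities are the standard $\Gamma$-convergence manipulations.
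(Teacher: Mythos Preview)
Your argument is correct and follows the same route as the paper: the paper simply packages the chain of inequalities into the abstract Lemma~\ref{konv_minimierer} applied with $X=Y=\mathcal{C}^*$, and handles the knot-class bookkeeping by citing \cite{reiter_2005} for $C^1$-stability of knot types rather than sketching the tube/isotopy argument. One small terminological remark: finite $\TP_q$ for $q>2$ yields embeddedness and $C^{1,1-2/q}$-regularity, not positive thickness $\triangle[\gamma]>0$ in the paper's sense (that would require $C^{1,1}$); what you actually need and use is just a tubular neighbourhood of an embedded $C^1$-curve, which is available regardless.
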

Moreover, the discrete tangent-point energies can also be used to
approximate the non-smooth ropelength functional $\rope$ in the 
sense of $\Gamma$-convergence. 
\begin{theorem}[$\Gamma$-convergence to ropelength]
\label{gamma_conv_rope}
It holds that  $L^{\frac{n-2}{n}}\left(\E_n^n\right)^{\frac{1}{n}}  
\stackrel{\Gamma}{\underset{n \to \infty}{\longrightarrow}} \rope$ on 
$(C_\textnormal{ia}^{1,1}(\Sl,\R^3),\|\cdot\|_{C^1} )$.
\end{theorem}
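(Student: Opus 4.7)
The strategy is to reduce the $L^n$-behaviour of $\E^n_n$ to the smallest discrete tangent-point radius
\begin{equation*}
R_n(\g):=\min_{i\neq j}\frac{|q_i-q_j|^2}{2\dist(l(q_j),q_i)}
\end{equation*}
among the point-tangent data of a biarc curve $\g\in\BL_n$. Extracting a single minimizing pair from the sum defining $\E^n_n(\g)$ and using \eqref{eq:biarc-lengths} gives $(\E^n_n(\g))^{1/n}\ge R_n(\g)^{-1}(L/(2n))^{2/n}$, while the universal pointwise bound $\rho_{ij}\le R_n(\g)^{-1}$ together with $\sum_{i\neq j}\lambda_i\lambda_j\le L^2$ yields the matching upper estimate $(\E^n_n(\g))^{1/n}\le R_n(\g)^{-1}L^{2/n}$. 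Multiplying by $L^{(n-2)/n}$ and exploiting $L^{(n-2)/n}\cdot L^{2/n}=L$ together with $(2n)^{-2/n}\to 1$, these collapse into the key pinching
\begin{equation*}
\frac{L}{R_n(\g)}\bigl(1+o(1)\bigr)\;\le\;L^{(n-2)/n}\bigl(\E^n_n(\g)\bigr)^{1/n}\;\le\;\frac{L}{R_n(\g)},
\end{equation*}
so the $\Gamma$-convergence reduces to one-sided control of $R_n(\g)$ against $\triangle[\g]$.

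\textbf{Liminf.} Let $\g_n\to\g$ in $C^1$; I may assume $\g_n\in\BL_n$ for all $n$, since otherwise $\E^n_n(\g_n)=\infty$ on a cofinite set of indices and the inequality is trivial there. For fixed $s\ne t\in\R/L\Z$, the junction partition of $\g_n$ has mesh at most $2L/n\to 0$ by \eqref{eq:biarc-lengths}, so there exist indices $i_n,j_n$ with $s_{i_n}^n\to s$ and $s_{j_n}^n\to t$; $C^1$-convergence then forces $q_{i_n}^n\to\g(s),\,q_{j_n}^n\to\g(t),\,t_{j_n}^n\to\g'(t)$, and continuity of the tangent-point functional in its arguments gives $\limsup_n R_n(\g_n)\le r_\textnormal{tp}(\g(s),\g(t))$. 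Taking the infimum over $s\ne t$ and applying the classical identity $\triangle[\g]=\inf_{s\ne t}r_\textnormal{tp}(\g(s),\g(t))$ for injective $C^{1,1}$-curves (see e.g.~\cite{gonzalez-maddocks_1999,gonzalez-etal_2002}), I obtain $\limsup_n R_n(\g_n)\le\triangle[\g]$, whence the lower pinch yields $\liminf_n L^{(n-2)/n}(\E^n_n(\g_n))^{1/n}\ge L/\triangle[\g]=\rope(\g)$.

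\textbf{Limsup.} As recovery sequence I take $\g_n\in\BL_n$ to be the biarc interpolant of $\g$ at the equidistant nodes $s_i=iL/n$, so that $\lambda_i\equiv L/n$ and the junction data coincide with $(\g(s_i),\g'(s_i))$. The biarc approximation estimates already set up in the proof of Theorem~\ref{gamma_convergence} ensure $\g_n\to\g$ in $C^1$. Because the junctions sit on $\g$ with matching tangents,
\begin{equation*}
R_n(\g_n)=\min_{i\ne j}r_\textnormal{tp}(\g(s_i),\g(s_j))\ge\inf_{s\ne t}r_\textnormal{tp}(\g(s),\g(t))=\triangle[\g],
\end{equation*}
and the upper pinch delivers $\limsup_n L^{(n-2)/n}(\E^n_n(\g_n))^{1/n}\le L/\triangle[\g]=\rope(\g)$.

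\textbf{Main obstacle.} The only non-elementary structural ingredient is the identity $\triangle[\g]=\inf_{s\ne t}r_\textnormal{tp}(\g(s),\g(t))$ for injective $C^{1,1}$-curves, which reduces the inherently three-point minimum defining thickness to a two-point minimum accessible to the discrete energy. Everything else amounts to the $L^n\to L^\infty$-collapse of the normalising prefactors $L^{2/n},(2n)^{-2/n}\to 1$ and to continuity of $r_\textnormal{tp}$ under $C^1$-convergence of the biarc data, both of which are straightforward.
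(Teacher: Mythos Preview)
Your approach is sound and notably more direct than the paper's. Whereas the paper routes the liminf through the previously established $\Gamma$-convergences $\E^n_k\stackrel{\Gamma}{\to}\TP_k$ (Lemma~\ref{gamma_conv_discrete_cont}) and $(\TP_k)^{1/k}\stackrel{\Gamma}{\to}\rope$ (Lemma~\ref{gamma_conv_cont_tpe_ropelength}), linked by the discrete H\"older inequality of Lemma~\ref{Hoelder_discrete_energy}, you obtain both inequalities in one stroke from the $L^n\to L^\infty$ pinching $L/R_n\cdot(2n)^{-2/n}\le L^{(n-2)/n}(\E^n_n)^{1/n}\le L/R_n$ together with the identity $\triangle[\g]=\inf_{s\ne t}r_{\textnormal{tp}}(\g(s),\g(t))$. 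This bypasses Theorem~\ref{gamma_convergence} entirely for the liminf and avoids the double limit $k\to\infty$, $n\to\infty$ in the limsup. The paper's limsup in fact arrives at essentially your upper pinch (see \eqref{limsup_ineq1}), but only after a detour through Lemma~\ref{Hoelder_discrete_energy}.

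There is one genuine slip in your limsup, however. The biarc interpolant of $\g$ at the equidistant nodes $s_i=iL/n$ does \emph{not} have biarc lengths $\lambda_i\equiv L/n$, nor total length $L$; in general $\mathscr{L}(\tilde\beta_n)=:L_n\ne L$ (cf.\ Theorem~\ref{Laengenkonvergenz}). Since the ambient space is $C^{1,1}_{\textnormal{ia}}(\R/L\Z,\R^3)$, your recovery sequence must be rescaled to length $L$ (and reparametrised by arclength), exactly as in Steps~3--4 of the proof of Theorem~\ref{limsup_proof}. After rescaling by $L/L_n$ the junctions sit at $\tfrac{L}{L_n}\g(s_i)$ rather than on $\g$, so your key inequality becomes
\[
R_n(\beta_n)=\frac{L}{L_n}\min_{i\ne j}r_{\textnormal{tp}}\big(\g(s_j),\g(s_i)\big)\ge\frac{L}{L_n}\,\triangle[\g],
\]
whence the upper pinch gives $L^{(n-2)/n}(\E^n_n(\beta_n))^{1/n}\le L/R_n(\beta_n)\le L_n/\triangle[\g]\to\rope(\g)$ by Theorem~\ref{Laengenkonvergenz}. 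This is a harmless correction, and indeed the factor $L_n$ appears in exactly the same way in the paper's estimate \eqref{limsup_ineq1}; but as written your assertion ``$\lambda_i\equiv L/n$ and the junction data coincide with $(\g(s_i),\g'(s_i))$'' is false for the curve that actually lies in the space, and the fix should be made explicit.
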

Also here we can state the convergence of almost minimizers to
ropelength minimizing curves in a prescribed knot class, which could
be of computational relevance for the minimization of ropelength.
\begin{corollary}[Discrete almost minimizers approximate ropelength minimizers]
\label{conv_mini_ropelength}
Let $\mathcal{K}$ be a tame knot class and 
$b_n \in \mathcal{C}^{**}:=
C_\textnormal{ia}^{1,1}\left(\Sl, \R^3 \right)\cap\mathcal{K}
$ with
\begin{align*}
\big|\inf_{ \mathcal{C}^{**}} 
\E_n^n - \E_n^n\left(b_n\right) \big| \to 0 \quad \text{and} \quad 
\| b_n - \gamma\|_{C^1} \to 0 \text{ as } n \to \infty.
\end{align*}
Then $\gamma$ is a minimizer of $\rope$ in 
$\mathcal{C}^{**}$ and 
$\lim_{n \to \infty} \E_n^n(b_n)=\rope(\gamma)$.
\end{corollary}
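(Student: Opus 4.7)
The plan is to deduce this result from Theorem~\ref{gamma_conv_rope} by the classical principle that, for a $\Gamma$-convergent sequence of functionals, any almost-minimizing sequence whose limit is prescribed converges in value to the $\Gamma$-limit at that point, and the limit is itself a minimizer. Both $(b_n)$ and $\gamma$ are given by hypothesis, so no compactness argument is needed, and the reasoning closely mirrors that of Corollary~\ref{cor:convergence-mini-tp}.

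First I would record the $\Gamma$-liminf half of Theorem~\ref{gamma_conv_rope}: since $b_n\to\gamma$ in $C^1$ and $\gamma\in\mathcal{C}^{**}$,
\begin{equation*}
\rope(\gamma)\le \liminf_{n\to\infty} L^{\frac{n-2}{n}}\bigl(\E_n^n(b_n)\bigr)^{1/n}.
\end{equation*}
For the matching upper bound I fix an arbitrary competitor $\tilde\gamma\in\mathcal{C}^{**}$ and invoke the $\Gamma$-limsup half of Theorem~\ref{gamma_conv_rope} to produce a recovery sequence $\tilde b_n\to\tilde\gamma$ in $C^1$ with $\limsup_n L^{(n-2)/n}(\E_n^n(\tilde b_n))^{1/n}\le \rope(\tilde\gamma)$. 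Recovery sequences in this paper are built from biarcs, so $\tilde b_n\in C^{1,1}_{\textnormal{ia}}$ is automatic, and $C^1$-proximity to the embedding $\tilde\gamma\in\mathcal{K}$, combined with tameness of $\mathcal{K}$, forces $\tilde b_n\in\mathcal{K}$ eventually; thus $\tilde b_n\in\mathcal{C}^{**}$ is admissible as a test function for $\inf_{\mathcal{C}^{**}}\E_n^n$.

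Combining the almost-minimality assumption $\E_n^n(b_n)\le \E_n^n(\tilde b_n)+\eta_n$ with $\eta_n\to 0$, the factorisation
\begin{equation*}
L^{\frac{n-2}{n}}\bigl(\E_n^n(b_n)\bigr)^{1/n}\le L^{\frac{n-2}{n}}\bigl(\E_n^n(\tilde b_n)\bigr)^{1/n}\bigl(1+\eta_n/\E_n^n(\tilde b_n)\bigr)^{1/n},
\end{equation*}
and the observation that the multiplicative correction tends to $1$ (since $(\E_n^n(\tilde b_n))^{1/n}\to \rope(\tilde\gamma)/L>0$ pins down the exponential behaviour of $\E_n^n(\tilde b_n)$, against which the vanishing additive error $\eta_n$ is absorbed after taking $n$-th roots), I conclude
\begin{equation*}
\limsup_{n\to\infty} L^{\frac{n-2}{n}}\bigl(\E_n^n(b_n)\bigr)^{1/n}\le \rope(\tilde\gamma).
\end{equation*}
Together with the liminf inequality this gives $\rope(\gamma)\le\rope(\tilde\gamma)$ for every $\tilde\gamma\in\mathcal{C}^{**}$, so $\gamma$ minimizes ropelength in $\mathcal{C}^{**}$; the choice $\tilde\gamma=\gamma$ then upgrades the liminf/limsup sandwich to a genuine limit equal to $\rope(\gamma)$.

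The one step that truly needs care is the admissibility of the recovery sequence, i.e.\ verifying that the biarc interpolants $\tilde b_n$ remain injective and stay in the prescribed knot class; this is the only non-routine ingredient and is handled by the standard stability of ambient isotopy classes of tame knots under small $C^0$-perturbations of embeddings, exactly as in Corollary~\ref{cor:convergence-mini-tp}. The bookkeeping with $n$-th roots above is then straightforward once one notes that $\rope(\tilde\gamma)>0$ prevents degeneracies in the denominator $\E_n^n(\tilde b_n)$.
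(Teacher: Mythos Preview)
Your argument is essentially the paper's own: the paper simply invokes Lemma~\ref{konv_minimierer} with $X=Y=\mathcal{C}^{**}$ (using the knot-class stability under $C^1$-convergence and that $\rope(\gamma)<\infty$ forces $\gamma\in C^{1,1}_{\textnormal{ia}}$), whereas you unpack that abstract lemma by hand via the liminf inequality and a recovery sequence for a generic competitor. The only extra content in your write-up is the $n$-th-root bookkeeping translating almost-minimality of $\E_n^n$ into almost-minimality of $L^{(n-2)/n}(\E_n^n)^{1/n}$; the paper does not carry this out either, and your heuristic that $(\E_n^n(\tilde b_n))^{1/n}\to\rope(\tilde\gamma)/L>0$ controls the correction is not quite complete when $\rope(\tilde\gamma)<L$ (then $\E_n^n(\tilde b_n)\to 0$ and the additive $\eta_n$ need not be negligible after taking roots), so one should read the hypothesis as being about the $\Gamma$-convergent functionals $L^{(n-2)/n}(\E_n^n)^{1/n}$ directly, which is what the paper's one-line proof implicitly assumes.
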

To the best of our knowledge, the only known contributions on 
variational convergence of discrete energies  to continuous
knot energies are the $\Gamma$-convergence
results of Scholtes.
In \cite{scholtes_2014b} he proves $\Gamma$-convergence of a discrete
polygonal
variant of the M\"obius energy to the classic M\"obius energy introduced by
O'Hara \cite{ohara_1991a}.
This result was strengthened later by
Blatt \cite{blatt_2019b}. In \cite{scholtes_2014a,scholtes_2018}
he proved the  $\Gamma$-convergence of polygonal versions of 
ropelength and of integral Menger curvature to ropelength and
to continuous integral Menger curvature, respectively.
It remains open at this point if stronger types of variational
convergence such as Hausdorff convergence of sets
of almost minimizers can be shown
for the non-local knot energies treated here, as was, e.g.,
established  in \cite{scholtes-etal_2022}
for the classic bending energy under clamped
boundary conditions.
It would be also interesting to set up
a numerical scheme for the discretized tangent-point energies 
$\E^n_q$ to
numerically approximate ropelength minimizers, in comparison to the 
simulated annealing computations in 
\cite{smutny_2004, carlen-etal_2005},
or to compute discrete (almost) minimizers of the tangent-point energy. 
The almost linear
energy convergence rate 
established in Theorem \ref{konv_ordnung_energien} in Section
\ref{approx_energies}
is identical with the one in \cite[Proposition 3.1]{scholtes_2014b}
for Scholtes' polygonal M\"obius energy, which exceeds
the $n^{-\frac14}$-convergence rate for the 
minimal distance approximation of the M\"obius energy by
 Rawdon and Simon \cite[Theorem 1]{rawdon-simon_2006}.

The present
paper is structured as follows. In Section \ref{biarcs} 
we provide the 
necessary background on biarcs --- mainly following Smutny's work \cite{smutny_2004}.
Section \ref{approx_energies} 
is devoted to the convergence of the discretized energies $\E^n_q$
including explicit convergence rates; see Theorem 
\ref{konv_ordnung_energien}.
In Section \ref{gamma_section} we treat $\Gamma$-convergence towards the continuous
tangent-point energies, as well as convergence of discrete almost 
minimizers, 
to prove 
Theorem \ref{gamma_convergence}
and Corollary \ref{cor:convergence-mini-tp}. 
Finally, in Section \ref{sec:ropelength} 
we prove $\Gamma$-convergence to the ropelength functional, 
Theorem \ref{gamma_conv_rope} and 
convergence of discrete almost minimizers to ropelength minimizers, 
Corollary
\ref{conv_mini_ropelength}. 
In Appendix \ref{proof_convolution_convergence} 
we establish convergence of
rescaled and reparametrized convolutions in fractional Sobolev spaces. 
Appendix \ref{app:B} contains some quantitative analysis
of general $C^1$-curves, and in Appendix \ref{app:C} we prove some 
auxiliary general
results in the context of $\Gamma$-convergence.

\section{Biarcs and Biarc curves}
\label{biarcs}
The discrete tangent-point energy defined in \eqref{eq:discrete-tan-point}
of the introduction is defined on biarc curves, which are space curves 
assembled from biarcs, i.e., from pairs of circular arcs. In this section we first present the basic definitions and a general existence result
due to Smutny \cite[Chapter 4]{smutny_2004}, before specializing to balanced proper biarc interpolations needed in our convergence proofs later on. 

\begin{definition}[Point-tangent pairs and biarcs]
\label{def:tp-pairs-biarcs}
Let $\mathcal{T} :=\R^3 \times \Sp^2$ be the set of \emph{point-tangent data} $[q,t]$,
where $\Sp^2$ is the unit sphere in $\R^3$. 

{\rm (i)}\,
A \emph{point-tangent pair} is a pair of tuples of the form 
$\left(\left[q_0,t_0\right],\left[q_1,t_1\right]\right) \in \mathcal{T} \times \mathcal{T}$
with $q_0 \neq q_1$.

{\rm (ii)}\,
A \emph{biarc} $(a,\bar{a})$ is a pair of circular arcs in $\R^3$ that 
are continuously joined with continuous tangents and that
interpolate a point-tangent pair 
$\left(\left[q_0,t_0\right],\left[q_1,t_1\right]\right) 
\in \mathcal{T} \times \mathcal{T}$. The common end point $m$ of the two 
circular arcs $a$ and $\bar{a}$ is called \emph{matching point}. 
The interpolation is meant with orientation, 
such that $t_0$ points to the interior of the arc $a$ and 
$-t_1$ points to the interior of the arc $\bar{a}$; see Figure 
\ref{example_biarcs}.
\end{definition}

\begin{figure}[h]
	\centering
	\begin{minipage}[b]{0.25\textwidth}
		\includegraphics[width=\textwidth]{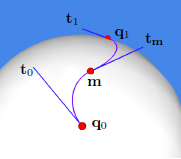}
	\end{minipage}
	\quad
	\quad
	\begin{minipage}[b]{0.25\textwidth}
		\includegraphics[width=\textwidth]{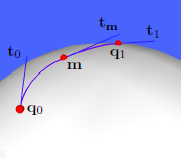}
	\end{minipage}
	\quad
	\quad
	\begin{minipage}[b]{0.25\textwidth}
		\includegraphics[width=\textwidth]{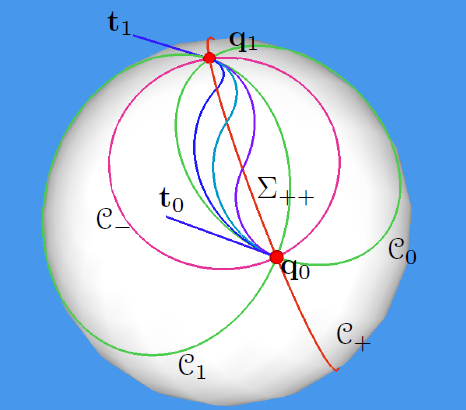}
	\end{minipage}
	\caption{\label{example_biarcs} 
	{\bf Left and middle:}  Examples of biarcs, $t_m$ is the 
	tangent at the common matching point $m$. {\bf Right:} 
	The circles $C_0,$ $C_1,$ $C_+$, and $C_-$ of Definition
\ref{def:C1C+C-}. Images taken from \cite[Figures 4.1 \& 4.3]{smutny_2004}
	by courtesy of Jana Smutny.}
\end{figure}

For two points $q_0, \; q_1 \in \R^3$ we set
$
d:=q_1 - q_0 $ and $e:=\frac{q_1-q_0}{|q_1-q_0|}=\frac{d}{|d|},$ and 
define
\begin{align}
\label{rot_matrix}
R(e):= 2 \; e \otimes e - \text{Id}=2e e^T-\Id,
\end{align}
which is a symmetric, proper rotation matrix representing
the reflection of $t$ at the unit vector $e$. Moreover,
for a point-tangent pair 
$\left(\left[q_0,t_0\right],\left[q_1,t_1\right]\right) 
\in \mathcal{T} \times \mathcal{T}$ we set
\begin{align}\label{eq:t0*t1*}
t_0^*:=R(e)t_0 \quad \text{and} \quad t_1^*:=R(e)t_1.
\end{align}

\begin{definition}\label{def:C1C+C-}
Let $\left(\left[q_0,t_0\right],
\left[q_1,t_1\right]\right) \in \mathcal{T} \times \mathcal{T}$
be a point-tangent pair.

{\rm (i)}\,
Let $\mathcal{C}_0$ be the circle through $q_0$ and $q_1$ with tangent 
$t_0$ at $q_0$ and let $\mathcal{C}_1$ be the circle through both 
points with tangent $t_1$ at $q_1$. If $t_0 + t_1^* \neq 0$, we 
denote the circle through $q_0$ and $q_1$ with tangent $t_0 + t_1^*$ 
at $q_0$ by $\mathcal{C}_+$, if $t_0 - t_1^* \neq 0$, we denote the 
circle through both points with tangent $t_0 - t_1^*$ at $q_0$ 
by $\mathcal{C}_{-}$; see Figure \ref{example_biarcs} on the right.

{\rm (ii)}\,
A point-tangent pair $\left(\left[q_0,t_0\right],
\left[q_1,t_1\right]\right) \in \mathcal{T} \times \mathcal{T}$ is 
called \emph{cocircular}, if $\mathcal{C}_0=\mathcal{C}_1$ 
as point sets. A cocircular point-tangent pair is classified as 
\emph{compatible}, if the orientations of the two circles induced by the tangents agree, and \emph{incompatible} otherwise.
\end{definition}

\begin{remark}
For a point-tangent pair $\left(\left[q_0,t_0\right],\left[q_1,t_1\right]\right) \in \mathcal{T} \times \mathcal{T}$, the compatible cocircular case is equivalent to $t_0 - t_1^* =0$. In this case, the circle $\mathcal{C}_{-}$ is not defined. The incompatible cocircular case is equivalent to $t_0 + t_1^*=0$, thus the circle $\mathcal{C}_+$ is not defined. 
\end{remark}

The following central existence result 
of Smutny not only states that interpolating
biarcs always exist, but it also characterizes geometrically
the possible locations
of the corresponding matching points depending on the type
of the point-tangent pair. For the precise statement we denote
for an arbitrary circle 
$\mathcal{C}$ through $q_0$ and $q_1$ the punctured set 
$\mathcal{C}':=\mathcal{C} \setminus \{q_0,q_1\}$.

\begin{proposition}
\label{existence_biarcs}
\cite[Proposition 4.7]{smutny_2004} 
For a given point-tangent pair \linebreak $\left(\left[q_0,t_0\right],\left[q_1,t_1\right]\right) \in \mathcal{T} \times \mathcal{T}$, we denote by $\Sigma_+ \subset \R^3$ the set of matching points of all possible biarcs interpolating the point-tangent pair.
Then:
\begin{enumerate}
\item If $\left(\left[q_0,t_0\right],\left[q_1,t_1\right]\right)$ is not cocircular, then $\Sigma_+= \mathcal{C}_{+}^{'}$.
\item If $\left(\left[q_0,t_0\right],\left[q_1,t_1\right]\right)$ is cocircular, we distinguish between two cases:
	\begin{enumerate}
	\item[\rm (a)] 
	If the point-tangent pair is compatible, then 
$\Sigma_+= \mathcal{C}_{+}^{'}=\mathcal{C}_0^{'}=\mathcal{C}_1^{'}$.
	\item[\rm (b)] If the point-tangent pair is incompatible, then $\Sigma_+$ is the sphere passing through $q_0$ and $q_1$ perpendicular to the circle $\mathcal{C}_{-}$ without the points $q_0$ and $q_1$.
	\end{enumerate}
\item $\Sigma_+$ is a straight line passing through $q_0$ and $q_1$ without the two points if and only if $t_0=t_1$ and $\left\langle t_0, e \right\rangle \neq 0$.
\item $\Sigma_+$ is a plane through $q_0$ and $q_1$ without the two points if and only if $t_0=t_1$ and $\left\langle t_0, e \right\rangle =0$.
\end{enumerate}
\end{proposition}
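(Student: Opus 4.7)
The plan is to reduce the whole proposition to one algebraic ``matching equation'' in $m$ and then to analyse this equation in the four cases. Fix a candidate matching point $m\in\R^3\setminus\{q_0,q_1\}$ and abbreviate $e_0:=(m-q_0)/|m-q_0|$ and $e_1:=(q_1-m)/|q_1-m|$. A biarc with matching point $m$ exists precisely when the tangent at $m$ induced by the first arc (through $q_0, m$ with prescribed tangent $t_0$ at $q_0$) coincides with the tangent at $m$ induced by the second arc (through $m,q_1$ with prescribed tangent $t_1$ at $q_1$). By the chord-tangent symmetry of a circle, expressed through the reflection \eqref{rot_matrix}, the first tangent equals $R(e_0)\,t_0$ and the second equals $R(e_1)\,t_1$, so the master equation is
\[
R(e_0)\,t_0 \;=\; R(e_1)\,t_1.
\]
The proof then consists in solving this single equation under the various geometric hypotheses.

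For the non-cocircular case (i), I would use that, by construction, the tangent $t_0+t_1^*$ of $\mathcal{C}_+$ at $q_0$ is the angle bisector between $t_0$ and $t_1^*$. From this bisection property and the chord-tangent form of the inscribed-angle theorem, one checks that $m$ lies on $\mathcal{C}_+'$ if and only if $R(e_0)$ maps $t_0$ onto the same unit vector onto which $R(e_1)$ maps $t_1$, i.e.\ if and only if $m$ satisfies the master equation. The compatible cocircular subcase (ii)(a) is immediate: compatibility forces $\mathcal{C}_0=\mathcal{C}_1=\mathcal{C}_+$ as sets, and for every $m$ on this common circle the tangents induced at $m$ from either side trivially coincide.

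The incompatible cocircular subcase (ii)(b) is the geometric heart of the statement. Here $t_0+t_1^*=0$ so $\mathcal{C}_+$ is not defined, while $\mathcal{C}_-$ carries the tangent $t_0-t_1^*=2t_0$ at $q_0$ and lies in some affine plane $P\subset\R^3$ containing $q_0,q_1$. Decomposing $m=p+h$ with $p\in P$ and $h\in P^\perp$ and inserting $t_1^*=-t_0$ into the master equation, one finds that the $P$-component is automatically satisfied, while the normal component collapses to a single scalar identity that can be rearranged to the form $|m-c|^2=r^2$ with an explicit centre $c\in P$ on the perpendicular bisector of $[q_0,q_1]$ and radius $r=|q_0-c|$. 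A direct orthogonality check at $q_0$ or $q_1$ then identifies this sphere as the unique sphere through $q_0$ and $q_1$ that meets $\mathcal{C}_-$ at right angles.

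The remaining cases (iii) and (iv) drop out by specialisation to $t_0=t_1$: then $t_1^*=R(e)t_0$, so $t_0+t_1^* = 2\langle t_0,e\rangle\, e$ and $t_0-t_1^* = 2\bigl(t_0-\langle t_0,e\rangle e\bigr)$. If $\langle t_0,e\rangle\neq 0$, the data are non-cocircular, and $\mathcal{C}_+$ — having tangent direction $e$ at $q_0$ and passing through $q_1$ — degenerates to the affine line through $q_0$ and $q_1$, yielding (iii). If instead $\langle t_0,e\rangle=0$, the data are incompatibly cocircular, $\mathcal{C}_-$ is the circle with diameter $[q_0,q_1]$ in $\mathrm{span}(t_0,e)$, and the perpendicular sphere of (ii)(b) degenerates into the affine plane through $q_0,q_1$ orthogonal to $t_0\times e$, yielding (iv). The main obstacle I expect is the explicit sphere identification in (ii)(b): extracting the correct centre and radius from the master equation, and verifying the ``perpendicular to $\mathcal{C}_-$'' condition, is the only step that is more than elementary reflection algebra combined with the inscribed-angle theorem.
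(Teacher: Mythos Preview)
The paper does not supply its own proof of this proposition; it is quoted from Smutny's thesis as \cite[Proposition 4.7]{smutny_2004} and used as a black box throughout Section~\ref{biarcs}. So there is no in-paper argument to compare your attempt against.

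On its own merits, your reduction to the matching equation $R(e_0)\,t_0=R(e_1)\,t_1$ is the correct starting point and is in fact how Smutny herself proceeds: the tangent at $m$ along an arc through $q$ with prescribed tangent $t$ at $q$ is exactly $R\big((m-q)/|m-q|\big)\,t$, by the chord--tangent form of the inscribed-angle theorem. Your treatment of (iii) and (iv) by specialising to $t_0=t_1$ is correct, including the identification of $\mathcal{C}_-$ as the circle with diameter $[q_0,q_1]$ when $\langle t_0,e\rangle=0$.

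The one place where the sketch is thin is (ii)(b). Your claim that after writing $m=p+h$ with $p\in P$, $h\in P^\perp$ ``the $P$-component is automatically satisfied, while the normal component collapses to a single scalar identity'' is not quite right as stated: the unit vectors $e_0,e_1$ themselves mix $p$ and $h$, so all three scalar components of $R(e_0)t_0-R(e_1)t_1$ depend on both the in-plane and out-of-plane parts of $m$. What actually happens is that the vector equation, after clearing denominators $|m-q_0|\,|m-q_1|$, becomes a system of quadratic equations in $m$ whose compatibility (three equations for a two-parameter solution set) relies on the constraint $t_1^*=-t_0$. Showing that this system is equivalent to a single sphere equation, and then identifying that sphere as the one perpendicular to $\mathcal{C}_-$ through $q_0,q_1$, does work but requires more bookkeeping than your outline suggests. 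This is precisely the ``main obstacle'' you anticipate, and it is a genuine one; the rest of your plan is sound.
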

A particularly powerful interpolation is possible if the
location of the matching point $m\in\Sigma_+$
of the biarc is roughly ``in between'' the points $q_0$ and $q_1$. 
The following definition states this precisely for the relevant
cases (i), (ii)(a), and (iii) of Proposition~\ref{existence_biarcs}.

\begin{definition}[Desired matching point
location $\Sigma_{++}$ and proper biarcs]\label{sigma++}
{\rm (i)}\,
Let  $\left(\left[q_0,t_0\right],\left[q_1,t_1\right]\right) \in \mathcal{T} \times \mathcal{T}$ be a point-tangent pair that is not incompatible cocircular. Then we denote by $\Sigma_{++} \subset \Sigma_+$ the subarc of $\Sigma_+$ from $q_0$ to $q_1$ with the orientation induced by the tangent $t_0 + t_1^*$ (see Figure \ref{sigma_plus_plus}).

{\rm (ii)}\,
A point-tangent pair $\left(\left[q_0,t_0\right],\left[q_1,t_1\right]\right) \in \mathcal{T} \times \mathcal{T}$ is called \emph{proper} if
$
\left\langle q_1 - q_0, t_0 \right\rangle >0$ and $\left\langle q_1 - q_0, t_1 \right\rangle >0.
$

{\rm (iii)}\,
A biarc is called \emph{proper} if it interpolates a proper point-tangent pair with a matching point $m \in \Sigma_{++}$.

{\rm (iv)}\,
Let $\gamma \in C^1_{\textnormal{ia}}\left(\Sl,\R^3\right)$.
We call a biarc \emph{$\gamma$-interpolating} and \emph{balanced} if it interpolates a point-tangent pair
$
\left(\left[\gamma(s),\gamma'(s)\right],\left[\gamma(s+h),\gamma'(s+h)\right]\right),
$
such that the matching point $m_h \in \Sigma_{++}^h$ satisfies $|m_h-\gamma(s)|=|\gamma(s+h)-m_h|$
for $h>0$ and $s \in \R$, where we indicate the dependence of matching point
and location by the index $h$.
\end{definition}

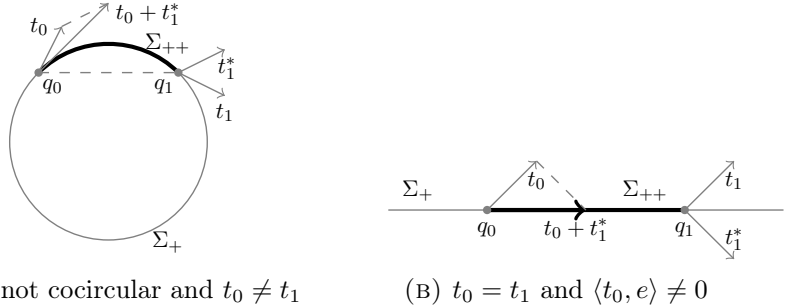
\begin{figure}[h]
\centering
\begin{subfigure}[t]{0.33\textwidth}
\centering
	\scalebox{1.3}{
	  \begin{tikzpicture}
		\draw[gray] (0cm,0cm) circle [radius=1cm];
		\draw[very thick] (0.7071,0.7071) arc (45:135:1);
		\filldraw[gray] (45:1cm) circle(1pt);
		\filldraw[gray] (135:1cm) circle(1pt);
		\draw (45:0.8cm) node[scale=0.6] {$q_1$};
		\draw (135:0.8cm) node[scale=0.6] {$q_0$};
		\draw[dashed,gray] (135:1cm) -- (45:1cm);
		\draw[->,gray] (45:1cm) -- (1.1785,0.4714);
		\draw (1.1785,0.3) node[scale=0.6] {$t_1$};
		\draw[->,gray] (135:1cm) -- (-0.4714,1.1785);
		\draw (-0.7,1.1785) node[scale=0.6] {$t_0$};
		\draw[->,gray] (45:1cm) -- (1.1785,0.9428);
		\draw (1.2,0.75) node[scale=0.6] {$t_1^*$};
		\draw[->,gray] (135:1cm) -- (0, 1.4142);
		\draw (0.4,1.3) node[scale=0.6] {$t_0+t_1^*$};
		\draw[dashed,gray] (-0.4714,1.1785) -- (0,1.4142);
		\draw (0.6,1) node[scale=0.6] {$\Sigma_{++}$};
		\draw (0.6,-1.02) node[scale=0.6] {$\Sigma_{+}$};
 	 \end{tikzpicture} 
 	 }
 	 \caption{not cocircular and $t_0\neq t_1$}
\end{subfigure}
	\quad
	\quad
\begin{subfigure}[t]{0.33\textwidth}
\centering
	\scalebox{1.3}{
	  \begin{tikzpicture}
	  \draw[very thick, ->] (0,0) -- (1,0);
	  \draw[very thick] (0,0) -- (2,0);
		\filldraw[gray] (0,0) circle(1pt);
		\filldraw[gray] (2,0) circle(1pt);
		\draw[-,gray] (-1,0) -- (0,0);
		\draw[-,gray] (2,0) -- (3,0);
		\draw (-0.7,0.2) node[scale=0.6] {$\Sigma_{+}$};
		\draw (1.6,0.2) node[scale=0.6] {$\Sigma_{++}$};
		\draw (0,-0.2) node[scale=0.6] {$q_0$};
		\draw (2,-0.2) node[scale=0.6] {$q_1$};
		\draw[->,gray] (0,0) -- (0.5,0.5);
		\draw (0.5,0.3) node[scale=0.6] {$t_0$};
		\draw[->,gray] (2,0) -- (2.5,0.5);
		\draw (2.5,0.3) node[scale=0.6] {$t_1$};
		\draw[->,gray] (2,0) -- (2.5,-0.5);
		\draw (2.5,-0.3) node[scale=0.6] {$t_1^*$};
		\draw[dashed,gray] (0.5,0.5) -- (1,0);
		\draw (0.9,-0.2) node[scale=0.6] {$t_0 + t_1^*$};
 	 \end{tikzpicture}
 	 } 
	\caption{$t_0 = t_1$ and $\langle t_0, e \rangle \neq 0$}
\end{subfigure}	
\caption{The set $\Sigma_{++}$ (boldface) contained in $\Sigma_+$}
\label{sigma_plus_plus}
\end{figure}
 Item (iv) of Definition \ref{sigma++} requires that the matching point
$m_h$ bisects the segment connecting $\g(s)$ and $\g(s+h)$. That this
is indeed possible for sufficiently small $h$ is the content of
the following result. Note that here and throughout the paper
we use the
\emph{periodic norm} 
\begin{equation}\label{eq:periodic-norm}
|s-t|_{\Sl}:=\min_{k \in \Z} |s+Lk-t|
\end{equation}
to measure distances in the periodic domain $\R/L\Z$. Moreover, 
for a continuous function $f$ on $[0,L]$ we
denote by $\omega_f:[0,L]\to [0,\infty)$ its \emph{modulus of 
continuity}, 
which satisfies $\omega_f(0)=0$
and which can be chosen to 
be concave and non-decreasing.

\begin{lemma}[Existence of $\g$-interpolating proper biarc]
\label{existenz_biarc_kurve}
Let $\gamma \in C^1_\textnormal{ia}\left(\Sl,\R^3\right)$  and
$h\in (0,\frac{L}2]$ such that $\wg(h)<\frac{1}{2}$
Then there exists a proper $\gamma$-inter\-polating balanced biarc 
interpolating the point-tangent pair
$
\left(\left[\gamma(s),\gamma'(s)\right],\left[\gamma(s+h),
\gamma'(s+h)\right]\right)$ for all $s\in\R.
$
\begin{proof} 
First notice that \eqref{bruch_s} and \eqref{bruch_s+h}
of Lemma \ref{absch_sk} together with $\wg(h)<\frac{1}{2}$ and
the injectivity of $\g$
imply that 
\begin{equation}
\label{groesser0}
\left\langle \gamma'(s), \gamma(s+h) - \gamma(s) \right\rangle >0\quad
\AND\quad
\left\langle \gamma'(s+h), \gamma(s+h) - \gamma(s) \right\rangle >0.
\end{equation}
Thus, the point-tangent pair is proper according to Definition
\ref{sigma++} (ii).

If the point-tangent pair is not incompatible cocircular and 
$\gamma'(s)\neq \gamma'(s+h)$ holds, it follows from 
Proposition \ref{existence_biarcs} (i) and (ii) (a) that 
$\Sigma_+^h$ is the circle $\mathcal{C}_{+}^{'}$.
Hence, $\Sigma_{++}^h$ is a circular arc between $\gamma(s)$ and $\gamma(s+h)$. Thus, the matching point $m_h$ can be chosen in $\Sigma_{++}^h$ such that $\left|m_h - \gamma(s)\right|=\left| \gamma(s+h)- m_h\right|$
holds.

If $\gamma'(s)=\gamma'(s+h)$ holds, $\Sigma_{+}^h$ is a straight line 
as a consequence of Proposition \ref{existence_biarcs} (iii), 
since we obtain $\left\langle \gamma'(s), e_h \right\rangle>0$
by dividing \eqref{groesser0} through $|\gamma(s+h)-\gamma(s)|$,
thus excluding case (iv) of Proposition \ref{existence_biarcs}.
Moreover, with  $\gamma'(s)=\gamma'(s+h)$ we infer for the
unit vector $e_h:=\g(s+h)-\g(s)/|\g(s+h)-\g(s)|$  by means 
of \eqref{rot_matrix} and \eqref{eq:t0*t1*}
\begin{align*}
\gamma'(s)+ \left(\gamma'(s+h) \right)^*\,\,\,\beq[\eqref{rot_matrix},\eqref{eq:t0*t1*}]\,\,\gamma'(s) + 2 \left(e_h \otimes e_h\right) \gamma'(s+h) - \gamma'(s+h)
= 2 \left\langle \gamma'(s), e_h \right\rangle e_h.
\end{align*}
Thus, the vector $\gamma'(s)+ \left(\gamma'(s+h) \right)^*$ is a 
positive multiple  of the vector $e_h$. 
In particular, $\gamma'(s)+ \left(\gamma'(s+h) \right)^*$ 
has the same orientation as $e_h$. According to Definition \ref{sigma++}, $\Sigma_{++}^h$ is in this case the line segment between $\gamma(s)$ and $\gamma(s+h)$. Therefore, the matching point $m_h$ in $\Sigma_{++}^h$ can be also chosen such that $\left| m_h-\gamma(s)\right| = \left| \gamma(s+h)-m_h\right|$.

So, we are finished with the proof once we have shown that the smallness
condition on $h$ excludes
case (ii) (a) of Proposition \ref{existence_biarcs}.
Indeed, suppose that the point-tangent pair was incompatible cocircular.
Then 
\begin{align}
\label{tangenten_gleichheit}
\gamma'(s) + \left(\gamma'(s+h) \right)^*=0,
\end{align}
and using \eqref{eq:t0*t1*}
we can write 
\begin{align*}
\left(\gamma'(s+h) \right)^* = 2 e_h \otimes e_h \gamma'(s+h) - \gamma'(s+h)
= 2 \left\langle e_h, \gamma'(s+h) \right\rangle e_h  - \gamma'(s+h).
\end{align*}
This representation inserted in \eqref{tangenten_gleichheit} 
leads to
$
\gamma'(s+h)-\gamma'(s) =  2 \left\langle e_h, \gamma'(s+h) \right\rangle e_h
$
and hence,
\begin{align}
\label{gleichheit}
\textstyle
\left| \gamma'(s+h) - \gamma'(s) \right|=2 \left| \left\langle e_h, \gamma'(s+h) \right\rangle \right| \underbrace{\left| e_h \right|}_{=1}= 2 \frac{\left\langle \gamma(s+h) - \gamma(s),\gamma'(s+h) \right\rangle}{\left|\gamma(s+h) - \gamma(s) \right|}.
\end{align}
By virtue of inequality  \eqref{bruch_s+h} in 
Lemma \ref{absch_sk} we conclude
\begin{align*}
\textstyle
2\left(1-\wg(h)\right) \bleq[\eqref{bruch_s+h}] 2 \frac{\left\langle \gamma(s+h) - \gamma(s),\gamma'(s+h) \right\rangle}{\left|\gamma(s+h) - \gamma(s) \right|}\beq[\eqref{gleichheit}] \left| \gamma'(s+h) - \gamma'(s) \right| \leq \wg(h),
\end{align*}
which is equivalent to $\wg(h)\geq \frac{2}{3}$
contradicting our assumption on $h$. 
\end{proof}
\end{lemma}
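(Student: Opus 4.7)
My plan is to verify, in order, the three requirements for a proper $\g$-interpolating balanced biarc: (a) the point-tangent pair $([\g(s),\g'(s)],[\g(s+h),\g'(s+h)])$ is proper in the sense of Definition \ref{sigma++}(ii); (b) it is not incompatible cocircular, so that $\Sigma_{++}^h$ is well-defined via Definition \ref{sigma++}(i); (c) there exists a matching point $m_h\in\Sigma_{++}^h$ equidistant from $\g(s)$ and $\g(s+h)$.

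For (a), I would invoke Lemma \ref{absch_sk}, which provides two-sided estimates of the inner products $\langle \g'(s),\g(s+h)-\g(s)\rangle$ and $\langle \g'(s+h),\g(s+h)-\g(s)\rangle$ in terms of $\wg(h)$. Since $\wg(h)<\frac12$, both inner products are bounded below by $(1-\wg(h))|\g(s+h)-\g(s)|$, which is strictly positive because $\g$ is injective and $h\in(0,L/2]$. Properness then follows immediately from Definition \ref{sigma++}(ii).

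For (b), I argue by contradiction: if the pair were incompatible cocircular, i.e.\ $\g'(s)+(\g'(s+h))^*=0$, then unfolding the definitions in \eqref{rot_matrix} and \eqref{eq:t0*t1*} yields $\g'(s+h)-\g'(s)=2\langle e_h,\g'(s+h)\rangle e_h$, where $e_h=(\g(s+h)-\g(s))/|\g(s+h)-\g(s)|$. Taking absolute values on both sides, the right-hand side equals $2\langle\g'(s+h),\g(s+h)-\g(s)\rangle/|\g(s+h)-\g(s)|$, which Lemma \ref{absch_sk} bounds from below by $2(1-\wg(h))$, while the left-hand side is bounded above by $\wg(h)$ by definition of the modulus of continuity. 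Combining these forces $\wg(h)\ge\frac23$, contradicting the hypothesis $\wg(h)<\frac12$.

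For (c), having ruled out incompatible cocircularity, Proposition \ref{existence_biarcs} leaves three subcases. In the non-cocircular case, $\Sigma_{++}^h$ is the oriented open arc of $\mathcal{C}_+$ from $\g(s)$ to $\g(s+h)$; in the compatible cocircular case, $\Sigma_{++}^h$ is again an open arc joining the two points. If instead $\g'(s)=\g'(s+h)$, then step (a) gives $\langle\g'(s),e_h\rangle>0$, which excludes case (iv) of Proposition \ref{existence_biarcs}, and a direct expansion shows $\g'(s)+(\g'(s+h))^*=2\langle\g'(s),e_h\rangle e_h$ points along $e_h$, so $\Sigma_{++}^h$ is the open segment from $\g(s)$ to $\g(s+h)$. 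In every subcase $\Sigma_{++}^h$ is a connected curve joining the two endpoints, so the continuous map $p\mapsto|p-\g(s)|-|\g(s+h)-p|$ changes sign along it, and the intermediate value theorem yields the desired balanced matching point $m_h$.

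The main obstacle is step (b): the sharp quantitative exclusion of the incompatible cocircular geometry. This is the only place where the hypothesis $\wg(h)<\frac12$ enters essentially, and it rests on matching the geometric lower bound from Lemma \ref{absch_sk} against the modulus-of-continuity upper bound on $|\g'(s+h)-\g'(s)|$. The other two steps are comparatively soft, reducing to intermediate-value and positivity arguments once the case analysis of Proposition \ref{existence_biarcs} is in place.
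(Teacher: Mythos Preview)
Your proposal is correct and follows essentially the same approach as the paper: properness via Lemma~\ref{absch_sk}, exclusion of the incompatible cocircular case by the quantitative contradiction $2(1-\wg(h))\le\wg(h)$, and then the case analysis from Proposition~\ref{existence_biarcs} to locate a balanced matching point in $\Sigma_{++}^h$. The only cosmetic differences are that you carry out the exclusion step before the case analysis rather than after, and that you make the intermediate-value argument for the balanced matching point explicit where the paper simply asserts such a point can be chosen.
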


Glueing together finitely many interpolating biarcs in a $C^1$-fashion
produces 
biarc curves precisely defined as follows.
\begin{definition}\cite[cf. Definition 6.1]{smutny_2004}
\label{biarc_Kurve}
{\rm (i)}\,
A closed \emph{biarc curve} $\beta : J \to \R^3$ is a closed curve 
assembled from biarcs in a $C^1$-fashion where the biarcs 
interpolate a sequence $\left(\left[q_i,t_i\right]\right)_{i \in I}$ 
of point-tangent tuples. 
$J$ is a compact interval, $I \subset \N$ bounded, and the first and 
last point-tangent tuple  coincide. The set of such biarc curves
is denoted by $\tilde{\B}_n$ where $n$ is the number of
indices contained in $I$.

{\rm (ii)}\,
We call a closed biarc-curve \emph{proper} if every biarc of the curve is proper.

{\rm (iii)}\,
A biarc-curve is 
\emph{$\gamma$-interpolating} for a given curve
$\g\in C^1_\textnormal{ia}(\R/L\Z,\R^3)$,
and \emph{balanced} if every biarc 
of the curve is $\gamma$-interpolating and balanced.

\end{definition}
Notice that the set $\B_n$ of closed biarc curves satisfying
\eqref{eq:biarc-lengths} introduced in the introduction
is a strict subset of $\tilde{\B}_n$.

Under suitable control of partitions of the periodic domain
we can prove the existence of proper, $\g$-interpolating, and
balanced biarc curves in Lemma \ref{existenz_biarc_kurve_folge} below. 
\begin{definition}
\label{def_part}
Let $c_1,c_2 >0 $. A sequence $\left(\mathcal{M}_n\right)_{n \in \N}$  of partitions of $\Sl$ with \linebreak $\mathcal{M}_n:=\left\{s_{n,0}, \dots, s_{n,n}\right\}$ and $0=\sn{0} < \sn{1} < \dots < \sn{n-1}<\sn{n}=L$ is called
\emph{$(c_1-c_2)$-distributed} if and only if for
\begin{align}
h_n:=\max_{k=0, \dots, n-1} \diffsn{k} \quad \text{and} \quad \tilde{h}_n:=\min_{k=0, \dots, n-1} \diffsn{k}\label{part}
\end{align}
one has
\begin{align}
\label{prop_part}
\textstyle
\frac{c_1}{n}\leq \tilde{h}_n \leq h_n \leq \frac{c_2}{n} \quad \text{and} \quad h_n \leq \frac{L}{2}\quad\Foa n\in\N.
\end{align}
\end{definition}

\begin{lemma}
\label{existenz_biarc_kurve_folge}
Let $\gamma \in C^1_\textnormal{ia}\left(\Sl,\R^3\right)$  $c_1,c_2 >0$ 
and $\left(\mathcal{M}_n\right)_{n \in \N}$ a sequence of 
$(c_1-c_2)$-distributed partitions. Then there is some
$N\in\N$ such that for all $n\ge N$ there exists a proper 
$\gamma$-interpolating and balanced biarc-curve $\beta_n$ interpolating 
the point-tangent pairs
\begin{align*}
\big(\left(\left[\gamma\left(\sn{i}\right),\gamma'\left(\sn{i}\right)\right],\left[\gamma\left(\sn{i+1}\right),\gamma'\left(\sn{i+1}\right)\right]\right)\big)_{i=0, \dots, n-1}.
\end{align*}
\begin{proof}
By means of the defining inequality \eqref{prop_part} for the 
$(c_1-c_2)$-distributed sequence $\left(\mathcal{M}_n\right)_{n \in \N}$
we have
 $|\sn{i+1}-\sn{i}|_{\Sl}=|\sn{i+1}-\sn{i}|$ 
for all $n\in\N$ and $i=0, \dots, n-1$
(see \eqref{eq:periodic-norm}), and we can choose $N\in\N$ so large
that 
the inequalities $\wg\left( \frac{c_2}{N}\right) < \frac{1}{2}$ 
and $\frac{c_2}{N} \leq \frac{L}{2}$ hold.  Then, in particular,
\begin{align*}
\textstyle
\wg\left(\diffsn{i} \right)\leq \wg(h_n)\bleq[\eqref{prop_part}] \wg\left( \frac{c_2}{N}\right)< \frac{1}{2}\quad\Foa n\ge N.
\end{align*}
As a consequence of Lemma \ref{existenz_biarc_kurve}, there exists for 
all $n\ge N$ and $i=0, \dots, n-1$ 
a proper $\gamma$-interpolating and balanced biarc interpolating the 
point-tangent pair 
$\left(\left[\gamma\left(\sn{i}\right),\gamma'
\left(\sn{i}\right)\right],\left[\gamma\left(\sn{i+1}\right),\gamma'\left(\sn{i+1}\right)\right]\right)$.
Now, we assemble for $i=0, \dots, n-1$ these $n$ biarcs as in Definition \ref{biarc_Kurve} and obtain a biarc-curve with the required properties.
\end{proof}
\end{lemma}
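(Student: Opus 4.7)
The plan is to view Lemma \ref{existenz_biarc_kurve_folge} as a simple discretization of Lemma \ref{existenz_biarc_kurve}: construct one proper balanced interpolating biarc on each subinterval $[\sn{i},\sn{i+1}]$ and then glue. The $(c_1-c_2)$-distribution of the partitions supplies a uniform smallness of the mesh, and uniform continuity of $\gamma'$ on the compact domain $\Sl$ turns this into the hypothesis $\wg(h)<\frac{1}{2}$ needed in Lemma \ref{existenz_biarc_kurve}.

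First I would choose $N$. Since $\gamma'\in C(\Sl,\S^2)$ is uniformly continuous, its concave modulus of continuity satisfies $\wg(r)\to 0$ as $r\to 0^+$. Hence there is $N\in\N$ with $\wg(c_2/N)<\tfrac{1}{2}$ and $c_2/N\le L/2$. For every $n\ge N$ the bound $h_n\le c_2/n\le c_2/N$ from \eqref{prop_part} together with monotonicity of $\wg$ gives $\wg(\diffsn{i})<\tfrac12$ and $\diffsn{i}\le L/2$ for all $i=0,\dots,n-1$. The second bound also guarantees $|\sn{i+1}-\sn{i}|_{\Sl}=|\sn{i+1}-\sn{i}|$, so that Lemma \ref{existenz_biarc_kurve} is applicable on each subinterval.

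Next, for $n\ge N$ and $i=0,\dots,n-1$, I would set $s=\sn{i}$, $h=\diffsn{i}$ and invoke Lemma \ref{existenz_biarc_kurve} to obtain a proper $\gamma$-interpolating balanced biarc $(a_i,\bar a_i)$ that interpolates the pair $([\gsn{i},\gstrichsn{i}],[\gsn{i+1},\gstrichsn{i+1}])$ with matching point equidistant from its two endpoints.

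Finally, I would concatenate these $n$ biarcs. By construction each biarc terminates at $\gsn{i+1}$ with tangent $\gstrichsn{i+1}$, which is exactly the initial data of the following biarc; the closing condition $\gsn{n}=\gsn{0}$ and $\gstrichsn{n}=\gstrichsn{0}$ is provided by the $L$-periodicity of $\gamma$. The resulting closed curve $\beta_n$ is therefore $C^1$, interpolates the prescribed point-tangent sequence in the sense of Definition \ref{biarc_Kurve}, and inherits properness as well as balancedness from each of its constituent biarcs. I do not anticipate a genuine obstacle here: all the geometric work is already encapsulated in Lemma \ref{existenz_biarc_kurve}, and the role of the $(c_1-c_2)$-distribution is solely to produce a single $N$ that works simultaneously for every subinterval through the uniform estimate $\wg(\diffsn{i})\le\wg(c_2/N)<\tfrac12$.
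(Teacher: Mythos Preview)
Your proposal is correct and follows essentially the same route as the paper: choose $N$ so that $\wg(c_2/N)<\tfrac12$ and $c_2/N\le L/2$, apply Lemma~\ref{existenz_biarc_kurve} on each subinterval, and assemble the resulting biarcs into a closed $C^1$ curve as in Definition~\ref{biarc_Kurve}. The only cosmetic difference is that the paper reads $h_n\le L/2$ directly off the definition of $(c_1{-}c_2)$-distributed partitions (cf.~\eqref{prop_part}) rather than deriving it from the choice of $N$.
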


From now on, whenever we write $\beta_n$
for a given curve $\g\in C^1_\textnormal{ia}(\R/L\Z,\R^3)$, 
we mean a proper $\gamma$-interpolating and balanced biarc-curve obtained in Lemma \ref{existenz_biarc_kurve_folge}. By $\lambda_{n,i}$ we denote the length of the $i$-th biarc of the curve $\beta_n$.
In general, the elements $\beta_n$ do not have the same length 
as the interpolated curve $\gamma$. However, Smutny showed in 
\cite{smutny_2004} that under certain assumptions 
the sequence of the lengths 
$\left(\mathscr{L}\left(\beta_n\right)\right)_{n \in \N}$ of $\beta_n$ 
converges towards the length $\mathscr{L}(\g)$ of $\gamma$. 
The following lemma is an essential ingredient for that proof.
\begin{lemma}
\label{Konv_max_bruch_lamda}
Let $\gamma \in C^{1,1}_\textnormal{ia}(\Sl,\R^3)$ and 
$\left(\beta_n\right)_{n \in \N}$ be a sequence of proper 
$\gamma$-inter\-polating and balanced biarc curves as in Lemma 
\ref{existenz_biarc_kurve_folge}. Then
\begin{align*}
\textstyle\max_{i=0, \dots, n-1} \Big|
\frac{\lambda_{n,i}}{\diffsn{i}}-1\Big| \to 0 
\quad\text{ as } n \to \infty.
\end{align*}
\begin{proof}
We identify the periodic domain $\R/L\Z$ with $[0,L]$
and check that $(c_1-c_2)$-distributed partitions
of $\R/L\Z$ satisfy Smutny's requirements in
\cite[Notation 6.2, 6.3]{smutny_2004} apart from nestedness of the
mesh. The latter, however, is not necessary in her proof; whence
we can apply 
\cite[Lemma 6.8]{smutny_2004} to conclude.
\end{proof}
\end{lemma}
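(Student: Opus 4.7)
The approach is to write each biarc length $\lambda_{n,i}$ as the sum of two circular arc lengths and to compare it with $\diffsn{i}$, exploiting the balance condition together with the $C^{1,1}$-regularity of $\g$. Fix $n$ large enough that Lemma~\ref{existenz_biarc_kurve_folge} applies, set $s:=\sn{i}$ and $h:=\diffsn{i}\in[c_1/n,c_2/n]$, and denote by $m_{n,i}$ the matching point of the $i$-th biarc and by $r_{n,i}:=|m_{n,i}-\gsn{i}|=|\gsn{i+1}-m_{n,i}|$ the common chord length granted by balancedness. Let $\alpha_{n,i}^{(1)}$ be the angle between $m_{n,i}-\gsn{i}$ and $\gstrichsn{i}$, and $\alpha_{n,i}^{(2)}$ the angle between $\gsn{i+1}-m_{n,i}$ and $\gstrichsn{i+1}$. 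Then the elementary chord-tangent formula for a circular arc interpolating prescribed point-tangent data gives
\begin{equation*}
\lambda_{n,i}=\left(\frac{\alpha_{n,i}^{(1)}}{\sin\alpha_{n,i}^{(1)}}+\frac{\alpha_{n,i}^{(2)}}{\sin\alpha_{n,i}^{(2)}}\right)r_{n,i}.
\end{equation*}

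The first step is to bound the angles uniformly by $\wg(h_n)$. Since $\gstrichsn{i}$ is the unit tangent at $\gsn{i}$ and $m_{n,i}-\gsn{i}$ points into the first arc, $\alpha_{n,i}^{(1)}$ equals the angle between $\gstrichsn{i}$ and a chord direction on a circle whose second tangent direction is (by continuity at the matching point) a common unit vector $t_m$ shared with the second arc; by standard biarc geometry $\alpha_{n,i}^{(1)}+\alpha_{n,i}^{(2)}$ is bounded by the turning angle $\angle(\gstrichsn{i},\gstrichsn{i+1})\leq|\gstrichsn{i+1}-\gstrichsn{i}|\leq\wg(h)$, so that both $\alpha_{n,i}^{(j)}\to 0$ uniformly in $i$ as $n\to\infty$, with rate controlled by $\wg(c_2/n)$. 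Consequently $\frac{\alpha}{\sin\alpha}=1+O(\alpha^{2})$ yields
\begin{equation*}
\max_{i}\left|\tfrac{\alpha_{n,i}^{(1)}}{\sin\alpha_{n,i}^{(1)}}+\tfrac{\alpha_{n,i}^{(2)}}{\sin\alpha_{n,i}^{(2)}}-2\right|\to 0.
\end{equation*}

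The second step is to control $2r_{n,i}/h$ uniformly. Because the point-tangent pair is not incompatible cocircular (by smallness of $\wg(h_n)$, as in Lemma~\ref{existenz_biarc_kurve}), the matching point lies on the arc $\Sigma_{++}^{h}$ connecting $\gsn{i}$ and $\gsn{i+1}$. The chord $|\gsn{i+1}-\gsn{i}|$ satisfies $|\gsn{i+1}-\gsn{i}|=h(1+O(\wg(h)))$ from arclength parametrization together with Lemma~\ref{absch_sk}. Balancedness combined with the triangle inequality gives $2r_{n,i}\geq|\gsn{i+1}-\gsn{i}|$ and, using that $\Sigma_{++}^{h}$ stays within distance $O(h\wg(h))$ of the chord segment (which follows by bounding the circumradius of $\mathcal C_+$ from below in terms of the tangent deviation $\wg(h)$), also $2r_{n,i}\leq|\gsn{i+1}-\gsn{i}|+O(h\wg(h))$. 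Thus $\max_i|2r_{n,i}/h-1|\to 0$.

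Combining the two steps yields $\max_i|\lambda_{n,i}/\diffsn{i}-1|\to 0$. The main obstacle is the geometric estimate in the second step: one needs a uniform bound showing that $\Sigma_{++}^{h}$ collapses onto the chord as $h\to 0$, which must be extracted from the definition of $\mathcal{C}_+$ in terms of $t_0+t_1^*$ and the fact that $|t_0-t_1^*|=O(\wg(h))$ under $C^{1,1}$-regularity. Once this is in hand, everything else reduces to Taylor-type expansions of $\frac{\alpha}{\sin\alpha}$ and standard $C^{1,1}$-estimates already encoded in Lemma~\ref{absch_sk}; alternatively, as the authors indicate, one may verify the hypotheses of \cite[Lemma~6.8]{smutny_2004} for $(c_1-c_2)$-distributed meshes and invoke that result directly.
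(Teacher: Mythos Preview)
Your approach is genuinely different from the paper's. The authors do not attempt a direct estimate at all: they simply observe that a $(c_1\!-\!c_2)$-distributed partition satisfies the mesh hypotheses of \cite[Notation~6.2, 6.3]{smutny_2004} (nestedness being unnecessary) and then invoke \cite[Lemma~6.8]{smutny_2004}. Your final sentence reproduces exactly this route; everything preceding it is an attempt at a self-contained argument.

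That self-contained argument, however, has a real gap in Step~1. The claim that $\alpha_{n,i}^{(1)}+\alpha_{n,i}^{(2)}$ is bounded by the turning angle $\angle\!\big(\gstrichsn{i},\gstrichsn{i+1}\big)$ is false for proper biarcs in general. Take $t_0=t_1$ with $\langle t_0,e\rangle\in(0,1)$: then $\Sigma_{++}$ is the chord segment, the balanced matching point is the midpoint, and both $\alpha^{(1)}$ and $\alpha^{(2)}$ equal the angle between $t_0$ and $e$, which is strictly positive, while the turning angle vanishes. (Incidentally, for unit vectors one has $|t_0-t_1|=2\sin\!\big(\tfrac12\angle(t_0,t_1)\big)\le\angle(t_0,t_1)$, so the inequality you wrote is in the wrong direction, though this is harmless asymptotically.) What \emph{does} hold, and what would rescue the step, is a bound of the form $\alpha^{(j)}\le\angle(t_0,e)+\angle(t_1,e)+\text{(deviation of $m-q_0$ from $e$)}$, and the first two angles are controlled by $\wg(h)$ via Lemma~\ref{absch_sk}. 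But establishing the third term---that the chord direction from $q_0$ to the balanced matching point on $\Sigma_{++}$ is uniformly close to $e$---is precisely the geometric collapse of $\mathcal C_+$ onto the chord that you flag as the ``main obstacle'' in Step~2. So both steps ultimately hinge on the same unproven estimate about $\Sigma_{++}$, and until that is supplied (or extracted from Smutny's analysis of $\mathcal C_+$), the direct argument is incomplete.
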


Now we show that the lengths $\mathscr{L}(\beta_n)$
of 
proper $\gamma$-interpolating and balanced biarc curves $\beta_n$
converge towards the length $\mathscr{L}(\g)$ of $\g$.

\begin{theorem}
\label{Laengenkonvergenz}
Let $\gamma \in C^{1,1}_\textnormal{ia}\left(\Sl,\R^3\right)$ 
and $\left(\beta_n\right)_{n \in \N}$ be a sequence of proper 
$\gamma$-interpolating and balanced biarc curves.  Then
$
\big|\frac{\mathscr{L}\left(\beta_n\right)}{\mathscr{L}(\gamma)}
-1\big| \to 0$  as  $n \to \infty.
$
\begin{proof}
This follows straight from \cite[Corollary 6.9]{smutny_2004}; 
under the same preconditions as we verified in the proof of Lemma 
\ref{Konv_max_bruch_lamda}.
\end{proof}
\end{theorem}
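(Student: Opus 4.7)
The plan is to reduce Theorem~\ref{Laengenkonvergenz} to the uniform ratio convergence already established in Lemma~\ref{Konv_max_bruch_lamda}. Since $\gamma$ is arclength parametrized, we have $\mathscr{L}(\gamma)=L=\sum_{i=0}^{n-1}(s_{n,i+1}-s_{n,i})$, while by construction $\mathscr{L}(\beta_n)=\sum_{i=0}^{n-1}\lambda_{n,i}$. Both lengths therefore decompose into sums over the same partition, so the theorem amounts to showing that these two sums differ by a vanishing relative error.

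First I would form the difference
\begin{align*}
\mathscr{L}(\beta_n)-\mathscr{L}(\gamma)\;=\;\sum_{i=0}^{n-1}\bigl(\lambda_{n,i}-(s_{n,i+1}-s_{n,i})\bigr),
\end{align*}
then apply the triangle inequality and factor $(s_{n,i+1}-s_{n,i})$ out of each summand, thereby expressing the absolute value of the difference as a weighted average of the deviations of the ratios $\lambda_{n,i}/(s_{n,i+1}-s_{n,i})$ from $1$. Estimating each such deviation by its maximum and dividing by $L$ yields
\begin{align*}
\left|\frac{\mathscr{L}(\beta_n)}{\mathscr{L}(\gamma)}-1\right|\;\leq\;\max_{i=0,\dots,n-1}\left|\frac{\lambda_{n,i}}{s_{n,i+1}-s_{n,i}}-1\right|.
\end{align*}

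The conclusion then follows immediately from Lemma~\ref{Konv_max_bruch_lamda}, which asserts that this maximum tends to zero as $n\to\infty$ under precisely the hypotheses at hand, namely $\gamma\in C^{1,1}_\textnormal{ia}(\Sl,\R^3)$ together with the implicit $(c_1-c_2)$-distributedness of the partitions underlying $\beta_n$. There is no genuine obstacle in this step: the real analytic content, that is, the comparison between the length of an individual biarc and the arclength of the subarc of $\gamma$ it interpolates, has already been packaged into Lemma~\ref{Konv_max_bruch_lamda} and, further upstream, into \cite[Lemma~6.8]{smutny_2004}. The only point worth double-checking is that the convention introduced after Lemma~\ref{existenz_biarc_kurve_folge} guarantees that the partitions $\mathcal{M}_n$ used to construct $\beta_n$ are indeed $(c_1-c_2)$-distributed, so that Lemma~\ref{Konv_max_bruch_lamda} is applicable in the present setting.
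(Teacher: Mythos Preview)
Your argument is correct. The paper's proof simply invokes \cite[Corollary~6.9]{smutny_2004} as a black box, whereas you instead reduce the claim to the already-stated Lemma~\ref{Konv_max_bruch_lamda} via the elementary inequality
\[
\left|\frac{\mathscr{L}(\beta_n)}{\mathscr{L}(\gamma)}-1\right|
\;\le\;
\max_{i=0,\dots,n-1}\left|\frac{\lambda_{n,i}}{s_{n,i+1}-s_{n,i}}-1\right|.
\]
In substance the two proofs coincide, since Smutny's Corollary~6.9 is itself an immediate consequence of her Lemma~6.8 (which is exactly what Lemma~\ref{Konv_max_bruch_lamda} wraps). Your version has the modest advantage of being self-contained within the paper rather than pointing to a second external result; the paper's version has the advantage of brevity.
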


In order to address convergence of biarc curves $\beta_n$
to the
interpolated curve $\g$ we need to reparametrize $\beta_n$ for all 
$n \in \N$ such that those reparametrizations 
are defined on $\Sl$ like $\gamma$ is. 
An explicit reparametrization function which maps the arclength parameters of $\gamma$ at the supporting points of the mesh to the arclength parameters of $\beta_n$ is constructed in \cite[Appendix A]{smutny_2004}.
With that we can show the 
$C^1$-convergence of a reparametrized sequence of biarc curves to the 
interpolated curve 
$\gamma$.

\begin{theorem}
\label{C1_Konvergenz}
Let $\gamma \in C^{1,1}_\textnormal{ia}
\left(\Sl,\R^3\right)$, and let $\left(\beta_n\right)_{n \in \N}$ be a 
sequence of proper $\gamma$-interpolating and balanced biarc curves  
parametrized by arclength. Then for $B_n:=\beta_n \circ \varphi_n$ 
with $\varphi_n$ as constructed in \cite[Appendix A]{smutny_2004} 
one has
$
\|\gamma-B_n\|_{C^1} \to 0$ as $n \to \infty.$
\begin{proof}
We want to apply \cite[Theorem 6.13]{smutny_2004}, where Smutny showed 
$C^1$-convergence under certain assumptions. Additionally to the 
hypotheses checked before in the proof of
Lemma \ref{Konv_max_bruch_lamda}, we need to show that the so called 
biarc parameters $\Lambda_{n,i}$ of the $i$-th biarc of the biarc 
curve $\beta_n$, representable as (cf.
\cite[Lemma 4.13]{smutny_2004})
\begin{align*}
\textstyle
\Lambda_{n,i}=\frac{\langle \gamma'(\sn{i}),\gamma(\sn{i+1})-\gamma(\sn{i})\rangle |m_{n,i}-\gamma(\sn{i})|^2}{\langle \gamma'(\sn{i}), m_{n,i} - \gamma'(\sn{i}) \rangle |\gamma(\sn{i+1})-\gamma(\sn{i})|^2},
\end{align*}
where the $m_{n,i}$ are  the matching points of the $i$-th biarc,
are uniformly bounded from below and from above.
In other words, we have to prove that 
there exist two constants $\Lambda_{\min},\Lambda_{\max}$ such that
\begin{align*}
0 < \Lambda_{\min} \leq \Lambda_{n,i} \leq \Lambda_{\max} <1
\quad\Foa n \in \N,\,i=0,\dots, n-1.
\end{align*}
Using the fact, that the biarc curves are balanced, i.e.
$
|m_{n,i}-\gamma(\sn{i})|=|m_{n,i}-\gamma(\sn{i+1})|,
$
and that $\gamma$ is parametrized by arclength, we can then estimate
by means of \eqref{bruch_s} in Lemma \ref{absch_sk} in the appendix
\begin{align*}
\Lambda_{n,i}&\bgeq \textstyle
\frac{\langle \gamma'(\sn{i}),\gamma(\sn{i+1})-\gamma(\sn{i})\rangle|m_{n,i}-\gamma(\sn{i})|^2}{|m_{n,i}-\gamma(\sn{i})\|\gamma(\sn{i+1})-\gamma(\sn{i})|^2}
\beq\textstyle
\frac{\langle \gamma'(\sn{i}),\gamma(\sn{i+1})-\gamma(\sn{i})\rangle}{|\gamma(\sn{i+1})-\gamma(\sn{i})|}\frac{|m_{n,i}-\gamma(\sn{i})|}{|\gamma(\sn{i+1})-\gamma(\sn{i})|}\\
&\bgeq \textstyle
\frac{\langle \gamma'(\sn{i}),\gamma(\sn{i+1})-\gamma(\sn{i})\rangle}{|\gamma(\sn{i+1})-\gamma(\sn{i})|}\underbrace{\textstyle
\frac{|m_{n,i}-\gamma(\sn{i})|}{|m_{n,i}-\gamma(\sn{i+1})|+|m_{n,i}-\gamma(\sn{i})|}}_{=\frac{1}{2}}\\
&\overset{\eqref{bruch_s}}{\ge}\textstyle
\frac{1}{2}(1-\wg(|s_{n,i+1}-s_{n,i}|))
\ge\frac{1}{2}(1-\wg(h_{n})).
\end{align*}
Hence, we can choose $n$ sufficiently large, such that $\frac{1}{2}(1-\wg(h_{n}))\geq \frac{1}{4}=:\Lambda_{\min}$. On the other hand, by \cite[Lemma 5.6]{smutny_2004} we have
\begin{align*}
\Lambda_{n,i}=1-\bar{\Lambda}_{n,i}+O(h_{n,i}^2)\quad
\Foa i=0,\ldots,n-1,\,\textnormal{as $n\to\infty$.}
\end{align*}
where the constant hidden in the $O(h_{n,i}^2)$-term
only depends on the curve $\g$ and
where $\bar{\Lambda}_{n,i}$ is given by
\begin{align*}
\textstyle
\bar{\Lambda}_{n,i}
=\frac{\langle \gamma'(\sn{i+1}),\gamma(\sn{i+1})-\gamma(\sn{i})\rangle |m_{n,i}-\gamma(\sn{i+1})|^2}{\langle \gamma'(\sn{i+1}), \gamma'(\sn{i+1})-m_{n,i} \rangle |\gamma(\sn{i+1})-\gamma(\sn{i})|^2}.
\end{align*}
As for $\Lambda_{n,i}$, we can estimate 
$\bar{\Lambda}_{n,i}\geq \frac{1}{2}(1-\wg(h_{n}))$, which yields
\begin{align*}
\Lambda_{n,i}&\textstyle
\leq 1-\frac{1}{2}(1-\wg(h_{n})) + O(h_n^2)
\leq\frac{1}{2}(1+\wg(h_n))+O(h_n^2)
\quad\textnormal{as $n\to\infty$.}
\end{align*}
Hence, again choosing $n$ sufficiently large, we obtain 
$\frac{1}{2}(1+\wg(h_n))+O(h_n^2)\leq \frac{3}{4}=:\Lambda_{\max}$ 
as the necessary uniform bound on the biarc parameters. 
Therefore, \cite[Theorem 6.13]{smutny_2004} is applicable and we obtain that 
$B_n \to \gamma$ in $C^1$ as $n \to \infty$.
\end{proof}
\end{theorem}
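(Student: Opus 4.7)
The plan is to reduce the statement to Smutny's general $C^1$-convergence result \cite[Theorem 6.13]{smutny_2004} for sequences of $\gamma$-interpolating biarc curves (reparametrized by the functions $\varphi_n$ from \cite[Appendix A]{smutny_2004}). That theorem produces $\|\gamma - B_n\|_{C^1}\to 0$ provided (a) the underlying partitions are appropriately controlled, and (b) the biarc parameters $\Lambda_{n,i}$ of each constituent biarc stay bounded away from both $0$ and $1$, uniformly in $n$ and $i$. Hypothesis (a) was already checked in the proof of Lemma \ref{Konv_max_bruch_lamda} when we verified that $(c_1\text{--}c_2)$-distributed partitions satisfy Smutny's mesh assumptions (nestedness is not used in her argument). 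So the task reduces to establishing (b).

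For the lower bound, I would start from the explicit formula
\begin{align*}
\Lambda_{n,i}=\frac{\langle \gamma'(\sn{i}),\gamma(\sn{i+1})-\gamma(\sn{i})\rangle |m_{n,i}-\gamma(\sn{i})|^2}{\langle \gamma'(\sn{i}), m_{n,i} - \gamma(\sn{i}) \rangle |\gamma(\sn{i+1})-\gamma(\sn{i})|^2}
\end{align*}
from \cite[Lemma 4.13]{smutny_2004}, replacing the denominator bracket by its modulus and cancelling one power of $|m_{n,i}-\gamma(\sn{i})|$. The balancing property $|m_{n,i}-\gamma(\sn{i})|=|m_{n,i}-\gamma(\sn{i+1})|$ lets me rewrite the remaining ratio $|m_{n,i}-\gamma(\sn{i})|/|\gamma(\sn{i+1})-\gamma(\sn{i})|$ as exactly $\frac12$ via a triangle inequality. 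Then inequality \eqref{bruch_s} of Lemma \ref{absch_sk} and the fact that $\gamma$ is arclength-parametrized give
$
\Lambda_{n,i}\ge \tfrac12(1-\wg(h_n))
$
which exceeds $\tfrac14=:\Lambda_{\min}$ as soon as $n$ is so large that $\wg(h_n)\le \tfrac12$.

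For the upper bound I would exploit Smutny's asymptotic identity \cite[Lemma 5.6]{smutny_2004}, namely
\begin{align*}
\Lambda_{n,i} = 1 - \bar{\Lambda}_{n,i} + O(h_{n,i}^2),
\end{align*}
where $\bar{\Lambda}_{n,i}$ is the analogously defined quantity obtained by exchanging the roles of $\sn{i}$ and $\sn{i+1}$. The exact same argument as for the lower bound (balanced biarc + arclength + \eqref{bruch_s}) yields $\bar{\Lambda}_{n,i}\ge \tfrac12(1-\wg(h_n))$, and hence $\Lambda_{n,i}\le \tfrac12(1+\wg(h_n)) + O(h_n^2)$, which is bounded by $\tfrac34=:\Lambda_{\max}$ for $n$ large enough. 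With $0<\Lambda_{\min}\le\Lambda_{n,i}\le\Lambda_{\max}<1$ uniformly, Smutny's \cite[Theorem 6.13]{smutny_2004} applies and concludes the proof.

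The main obstacle I anticipate is ensuring that the constants implicit in Smutny's $O(h_{n,i}^2)$-term depend only on $\gamma$ (through its $C^{1,1}$-norm) and not on $n$ or $i$, so that both uniform bounds genuinely hold along the whole sequence $(\beta_n)$; this is where the $C^{1,1}$-regularity of $\gamma$ is essential, as it controls the geometric error between the biarc and the curve uniformly along the mesh. Once this uniform control is in hand, the remaining estimates are soft and only require $\wg(h_n)\to 0$.
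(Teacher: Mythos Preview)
Your proposal is correct and follows essentially the same route as the paper: reduce to \cite[Theorem 6.13]{smutny_2004}, verify the mesh hypotheses via Lemma \ref{Konv_max_bruch_lamda}, and then bound the biarc parameters using the explicit formula from \cite[Lemma 4.13]{smutny_2004} together with the balancing condition and \eqref{bruch_s} for the lower bound, and \cite[Lemma 5.6]{smutny_2004} with the symmetric estimate on $\bar\Lambda_{n,i}$ for the upper bound, arriving at the same constants $\Lambda_{\min}=\tfrac14$ and $\Lambda_{\max}=\tfrac34$. One small slip: the triangle inequality gives $|m_{n,i}-\gamma(\sn{i})|/|\gamma(\sn{i+1})-\gamma(\sn{i})|\ge\tfrac12$, not equality, but since you only need a lower bound this is harmless.
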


\section{Discrete energies on interpolating biarc curves converge
to the continuous $\TP_q$-energy}
\label{approx_energies}
For
the central convergence result of this section, Theorem
\ref{konv_ordnung_energien}, we work with discrete tangent-point
energies $\tilde{\E^n_q}$ with the larger effective domain $\tilde{\B}_n$
(see Definition \ref{biarc_Kurve} (i)),
instead of with 
$\E^n_q$ introduced in \eqref{eq:discrete-tan-point}  of
the introduction, whose effective domain $\B_n$ is defined
by the constraint 
\eqref{eq:biarc-lengths}. 
In other words,
\begin{align}\label{eq:larger-discrete-tan-point}
\tilde{\E}_q^n(\gamma):=\begin{cases}
\displaystyle \sum_{i=0}^{n-1} \sum_{j=0,j\neq i}^{n-1} 
\left(\frac{2\;\dist(l(q_j),q_i)}{|q_i-q_j|^2}\right)^q \lambda_i \lambda_j, & \text{if }\gamma \in \tilde{\mathcal{B}}_n\\
\infty, & \, \text{otherwise.}
\end{cases}
\end{align}
These discrete energies evaluated on a sequence 
$\left(\beta_n\right)_{n \in \N}$ of proper $\g$-interpolating and
balanced biarc curves converge with a certain rate
to the continuous $\TP_q$-energy of $\g$ if
$\g$ is sufficiently smooth.
Some of the ideas in the proof of the theorem are based on 
\cite[Proposition 3.1]{scholtes_2014b} by Scholtes.

\begin{theorem}
\label{konv_ordnung_energien}
Let $c_1,c_2>0$ and $\gamma \in C^{1,1}_\textnormal{ia}
\left(\Sl,\R^3\right)$, and  
$\left(\mathcal{M}_n\right)_{n \in \N}$ with 
$\mathcal{M}_n=\left\{\sn{0},\dots,\sn{n} \right\}$ be a 
$(c_1-c_2)$-distributed sequence of partitions of $\Sl$ (see
Definition \ref{def_part}). 
Let $\left(\beta_n\right)_{n \in \N}$, with $\beta_n \in \tilde{\mathcal{B}}_n$ for all $n \in \N$, be a sequence of proper $\gamma$-interpolating and balanced biarc curves interpolating the point-tangent
data 
$$\left(\left(\left[\gsn{i},\gstrichsn{i}\right],\left[\gsn{i+1},\gstrichsn{i+1}\right]\right)\right)_{i=0, \dots, n-1}.
$$
Then there exists an $N\in \N$ such that for every 
$\varepsilon>0$ there is a constant $C_\varepsilon>0$ such that
\begin{align*}
\big|\TP_q\left(\gamma\right)
-\tilde{\E}_q^n\left(\beta_n\right)\big|\leq \textstyle
\frac{C_\varepsilon}{n^{1-\varepsilon}}\quad\Foa n\ge N.
\end{align*}
\begin{proof}
Set $\Upsilon=4\frac{c_2}{c_1}$ and define for $i,j \in \{0, \dots, n\}$
the periodic index distance 
\begin{align*}
|i-j|_n:=\min \{|i-j|,n-|i-j|\}.
\end{align*}
We then decompose 
\begin{align}
\label{zerl_energien}
\begin{split}
\TP_q\left(\gamma\right)-\tilde{\E}_q^n\left(\beta_n\right)
=&\textstyle\sum_{i=0}^{n-1}\sum_{j, |i-j|_n\leq \Upsilon} \int_{\sn{j}}^{\sn{j+1}} \int_{\sn{i}}^{\sn{i+1}} \left(2\frac{\distg{s}{t}}{\diffgamma{s}{t}^2}\right)^q \d s \d t\\
&\textstyle
-\sum_{i=0}^{n-1}\sum_{j, 0<|i-j|_n\leq \Upsilon} \left(2\frac{\distg{\sn{i}}{\sn{j}}}{\diffgamma{\sn{i}}{\sn{j}}^2}\right)^q \lambda_{n,i}\lambda_{n,j}\\
&\textstyle
+2^q \sum_{i=0}^{n-1}\sum_{j, |i-j|_n> \Upsilon} \left(A_{i,j}+B_{i,j}+C_{i,j}\right),
\end{split}
\end{align}
with
\begin{align*}
A_{i,j}&\textstyle:=\int_{\sn{j}}^{\sn{j+1}} \int_{\sn{i}}^{\sn{i+1}}\frac{\distg{s}{t}^q-\distg{\sn{i}}{\sn{j}}^q}{\diffgamma{s}{t}^{2q}}\d s \d t, \\
B_{i,j}&\textstyle:=\int_{\sn{j}}^{\sn{j+1}} \int_{\sn{i}}^{\sn{i+1}} \distg{\sn{i}}{\sn{j}}^q\left( \frac{1}{\diffgamma{s}{t}^{2q}}-\frac{1}{\diffgamma{\sn{i}}{\sn{j}}^{2q}}\right) \d s \d t, \\
C_{i,j}&\textstyle:=\frac{\distg{\sn{i}}{\sn{j}}^q}{\diffgamma{\sn{i}}{\sn{j}}^{2q}}\big[\diffsn{i}\diffsn{j}-\lambda_{n,i}\lambda_{n,j} \big].
\end{align*}
\textbf{Step 1:} 
Since $\g$ is an injective $C^1$-curve it is
bilipschitz (see Lemma \ref{lem:bilipschitz}), i.e., 
there exists a constant $c_\gamma \in (0,\infty)$ such that 
\begin{align}
\label{bi_lipschitz}
|t-s|_{\Sl}\leq c_\gamma |\gamma(t)-\gamma(s)|\quad\Foa t, s \in \R.
\end{align}

\textbf{Step 2:} 
Now, we give an upper bound for $2\frac{\distg{s}{t}}{\diffgamma{s}{t}^2}$
for all $s,t \in \R$ with $s\neq t$.
Without loss of generality we assume $t<s$. 
Then there exists a number $k=k(s,t) \in \Z$ 
satisfying $|t-s|_{\Sl}=|kL+t-s|$. 
We use 
the periodicity of $\gamma$   
and $K:=\|\g''\|_{L^\infty}<\infty$ 
(since $\g\in C^{1,1}(\R/L\Z,\R^3)\simeq W^{1,\infty}(\R/L\Z,\R^3)$)
to estimate
\begin{align}
&\distg{s}{t}=\inf_{\mu \in \R}|\gamma(s)-\gamma(t)-\mu\gamma'(t)|
\notag\\
&\le|\g(s)-\gamma(kL+t)-(s-(kL+t))\gamma'(kL+t)|\notag\\
&\le \textstyle\int^s_{kL+t} \int^u_{kL+t} |\gamma''(v)| \d v \d u
\le K (kL+t-s)^2=K|t-s|_{\Sl}^2,\label{absch_dist_nach oben}
\end{align}
where we assumed without loss of generality that $kL+t<s$ for the
integrals.
Therefore,  by means of \eqref{bi_lipschitz}
\begin{align} 
\textstyle
\left(2\frac{\distg{s}{t}}{\diffgamma{s}{t}^2}\right)^q &\leq (2 c_\gamma^2 K)^q \label{absch_bruch}\quad\Foa s,t\in\R,\, s\not=t.
\end{align}
Define $C_{1}:= \left(2 c_\gamma^2 K\right)^q c_2 (2\Upsilon+1)L$. 
Applying the calculations above we can estimate the first term
on the right-hand side of  \eqref{zerl_energien} from above by
\begin{align}
\label{Term1}
\bleq[\eqref{absch_bruch}]& 
\textstyle
(2 c_\gamma^2 K)^q \sum_{i=0}^{n-1}\sum_{j, |i-j|_n\leq \Upsilon}\diffsn{i}\diffsn{j}\nonumber\\
\bleq[\eqref{prop_part}]& \textstyle
(2 c_\gamma^2 K)^q \frac{c_2}{n} \sum_{i=0}^{n-1} 
\diffsn{i}\underbrace{\textstyle
\sum_{j, |i-j|_n\leq \Upsilon} 1}_{\leq  2\Upsilon+1}\nonumber\\
\bleq& 
(2 c_\gamma^2 K)^q {\textstyle
\frac{c_2}{n}} (2\Upsilon+1) \underbrace{ 
\textstyle\sum_{i=0}^{n-1} \diffsn{i}}_{=L}
\beq
\frac{C_{1}}{n}.
\end{align}

\textbf{Step 3:} By Lemma \ref{Konv_max_bruch_lamda} we have
$
\max_{k=0, \dots, n-1} \frac{\lambda_{n,k}}{\diffsn{k}} \to 1$ as
$n \to \infty.
$
Hence, there exists a constant $c_\lambda \in (1,\infty)$ such that
\begin{align}
\lambda_{n,i} \leq c_\lambda\diffsn{i}\quad\Fo i=0, \dots, n-1.
\label{lambda}
\end{align}
Define $C_{2}:=c_\lambda^2 C_{1}$. Thus,
\begin{align}
\label{Term2}
&\textstyle
\sum_{i=0}^{n-1}\sum_{j, 0<|i-j|_n\leq \Upsilon} \left(2\frac{\distg{\sn{i}}{\sn{j}}}{\diffgamma{\sn{i}}{\sn{j}}^2}\right)^q \lambda_{n,i}\lambda_{n,j}\nonumber\\
\bleq[\eqref{lambda}]& \textstyle
c_\lambda^2 \sum_{i=0}^{n-1}\sum_{j, 0<|i-j|_n\leq \Upsilon} \left(2\frac{\distg{\sn{i}}{\sn{j}}}{\diffgamma{\sn{i}}{\sn{j}}^2}\right)^q \diffsn{i}\diffsn{j}\nonumber\\
\bleq[\eqref{absch_bruch}]& {\textstyle
c_\lambda^2 (2c_\gamma^2K)^q  \sum_{i=0}^{n-1}\sum_{j, 0<|i-j|_n\leq \Upsilon} \diffsn{i}\diffsn{j}}
\overset{\eqref{Term1}}{\le} \frac{C_{2}}{n},
\end{align}
which takes care of the second term on the right-hand side
of \eqref{zerl_energien}.

\textbf{Step 4:} The sequence $(\mathcal{M}_n)_n$ is assumed
to be 
$(c_1-c_2)$-distributed, so that in view of \eqref{prop_part}
\begin{align*}
\diffsn{k}=\diffsn{k}_{\Sl} \quad\Foa n\in\N\AND k=0, \dots, n-1.
\end{align*}
For $s \in [\sn{i},\sn{i+1})$ and $t \in [\sn{j},\sn{j+1})$ 
with $i\neq j$ we use $|s-t|_{\R/L\Z}\le |s-\sn{i}|_{\Sl}
+ |\sn{i}-\sn{j}|_{\Sl}+|\sn{j}-t|_{\Sl}$ to infer
the inequality
\begin{align*}
|t-s|_{\Sl}
&\textstyle
\bleq \left|\sn{i}-\sn{j}\right|_{\Sl}+2\max_{k=0, \dots, n-1} \diffsn{k}_{\Sl}\\
&\textstyle\bleq[\eqref{prop_part}] 
\left|\sn{i}-\sn{j}\right|_{\Sl} + 2\frac{c_2}{c_1} 
\underbrace{\textstyle\min_{k=0, \dots, n-1} 
\diffsn{k}_{\Sl}}_{\leq \left|s_{n,i}-s_{n,j}\right|_{\Sl}} \\
&\textstyle
\bleq \big(1+2\frac{c_2}{c_1}\big) \left|\sn{i}-\sn{j}\right|_{\Sl}.
\end{align*}
Now let $|i-j|_n>\Upsilon =4\frac{c_2}{c_1}$. In particular,
\begin{align}
\textstyle\label{ij}
2\frac{c_2}{c_1}<\frac{1}{2}|i-j|_n.
\end{align}
Then, similarly as before,
\begin{align*}
|t&-s|_{\Sl}
\textstyle\bgeq \left|\sn{i}-\sn{j}\right|_{\Sl} - 2\max_{k=0, \dots, n-1} \diffsn{k}_{\Sl}\\
&\bgeq[\eqref{prop_part}] |i-j|_n \min_{k=0, \dots, n-1} 
\diffsn{k}_{\Sl}-{\textstyle\frac{2c_2}{c_1}}\min_{k=0, \dots, n-1} \diffsn{k}_{\Sl}\\
&\textstyle\beq\big(|i-j|_n-\frac{2c_2}{c_1}\big)
\min_{k=0, \dots, n-1} \diffsn{k}_{\Sl}\\
&\textstyle
\overset{\eqref{ij}}{>} \frac{1}{2} |i-j|_n\min_{k=0, \dots, n-1} \diffsn{k}_{\Sl}
\textstyle\bgeq\frac{c_1}{2c_2}\left|\sn{i}-\sn{j}\right|_{\Sl}.
\end{align*}
In total, we conclude for $|i-j|_n>\Upsilon $ 
\begin{align}
\label{absch_ij}
\textstyle\frac{c_1}{2c_2}\left|\sn{i}-\sn{j}\right|_{\Sl} 
\leq |t-s|_{\Sl} \leq \big(1+2\frac{c_2}{c_1}\big)
\left|\sn{i}-\sn{j}\right|_{\Sl}.
\end{align}
for $s\in [\sn{i},\sn{i+1})$ and $t\in [\sn{j},\sn{j+1})$,
which we consider also in Steps 5 and 6. In addition, we assume
from now on that $|i-j|_n>\Upsilon $.

\textbf{Step 5:} In order to estimate $A_{i,j}$, we initially 
estimate for arbitrary $a,b\geq 0$
\begin{align}
\label{a_b_hoch_q}
|b^q-a^q|=\textstyle
\left|\int_a^b \frac{d}{dx}x^q \d x \right|=\left|\int_a^b qx^{q-1} \d x\right|\leq q |b-a|\max\{a,b\}^{q-1},
\end{align}
since the function $f : [0,\infty) \to [0,\infty), x \to x^{q-1}$ is 
non-decreasing for $q\geq 2$.
We abbreviate $d(\cdot,\cdot):=\dist(l(\cdot),\g(\cdot))$
and use estimate \eqref{a_b_hoch_q} to find for 
$s \in [\sn{i},\sn{i+1})$ and $t \in [\sn{j},\sn{j+1})$
\begin{align}
\label{dist_hoch_q}
|d^q(t,s)-d^q(\sn{j},\sn{i})|
\leq q \big|d(t,s)-d(\sn{j},\sn{i})\big|
(\max\left\{d(t,s),d(\sn{j},\sn{i})\right\})^{q-1}.
\end{align}
Furthermore, combining \eqref{absch_dist_nach oben}
with \eqref{absch_ij} yields
\begin{align}
\begin{split}
\label{dist_ij}
d(t,s)&\overset{\eqref{absch_dist_nach oben}}{\leq} K 
|t-s|_{\Sl}^2 \bleq[\eqref{absch_ij}] K 
\textstyle
\big(1+2\frac{c_2}{c_1}\big)^2 \left|\sn{i}-\sn{j}\right|_{\Sl}^2,\\
d(\sn{j},\sn{i})&\leq K \left|\sn{i}-\sn{j}\right|_{\Sl}^2\leq 
\textstyle
K\big(1+2\frac{c_2}{c_1}\big)^2 \left|\sn{i}-\sn{j}\right|_{\Sl}^2.
\end{split}
\end{align}
Hence,
\begin{align}
\label{maxdist}
(\max\left\{d(t,s),d(\sn{j},\sn{i})\right\})^{q-1} 
\leq \textstyle
K^{q-1} \big(1+2\frac{c_2}{c_1}\big)^{2q-2} 
\left|\sn{i}-\sn{j}\right|_{\Sl}^{2q-2}.
\end{align}
Moreover, we estimate again by virtue of \eqref{absch_ij} now
for $s:=\sn{i}$
\begin{align*}
\left|t-\sn{i}\right|_{\Sl}\bleq[\eqref{absch_ij}] \textstyle
\big(1+2\frac{c_2}{c_1}\big)\left|\sn{i}-\sn{j}\right|_{\Sl},
\end{align*}
and we use \eqref{prop_part} to find for $t\in [\sn{j},\sn{j+1})$
\begin{align*}
\left|t-\sn{j}\right|_{\Sl} &\bleq \max_{k=0, \dots, n-1}\diffsn{k}_{\Sl}
\bleq[\eqref{prop_part}]{\textstyle \frac{c_2}{c_1}}
\min_{k=0, \dots, n-1}\diffsn{k}_{\Sl}\\
&\bleq \textstyle
\big(1+2\frac{c_2}{c_1}\big)\left|\sn{i}-\sn{j}\right|_{\Sl}.
\end{align*}
Combining these last two estimates with \eqref{absch_ij} leads to
\begin{align}
\label{absch_max}
\begin{split}
&\max\left\{|t-s|_{\Sl},\left|t-\sn{i}\right|_{\Sl}\right\}+\left|t-\sn{j}\right|_{\Sl}+\left|t-\sn{i}\right|_{\Sl} \\
\leq& 3 
\textstyle
\big(1+2\frac{c_2}{c_1}\big)\left|\sn{i}-\sn{j}\right|_{\Sl}
\quad\Fo s\in [\sn{i},\sn{i+1}),\,t\in [\sn{j},\sn{j+1}).
\end{split}
\end{align}
For arbitrary  $\tau \in \R$ the mapping $P_{\gamma'(\tau)}:
\R^3\to\R\g'(\tau)$ defined as
\begin{align}
P_{\gamma'(\tau)} (v):=\langle v, \gamma'(\tau)\rangle\; \gamma'(\tau),
\quad\Fo v\in\R^3\label{proj}
\end{align}
is the orthogonal projection onto the subspace $\R\gamma'(\tau)$ 
since
$|\gamma'|=1$, and we have
\begin{align}
\left| P_{\gamma'(\tau)} (v) -v \right| &\leq |w-v| 
\quad \text{for all } w \in \R\gamma'(\tau),\,v\in\R^3. \label{proj_dist}
\end{align}
Moreover, we have for any $\tau,\sigma\in\R $
\begin{align}
\label{dist_proj}
d(\tau,\sigma)=\distg{\sigma}{\tau}
&=\left|P_{\gamma'(\tau)} (\gamma(\sigma)-\gamma(\tau))
-(\gamma(\sigma)-\gamma(\tau))\right|.
\end{align}
Furthermore, we calculate for $s\in [\sn{i},\sn{i+1})$ and
$t\in [\sn{j},\sn{j+1})$ using the linearity of the projection
\begin{align}
P_{\gamma'(t)} (\gamma(s) & -\gamma(t))-P_{\gstrichsn{j}} \left(\gsn{i}-\gsn{j}\right) \nonumber\\
\beq & P_{\gamma'(t)} \left(\gamma(s)-\gamma(\sn{i})\right)
+P_{\gamma'(t)}\left(\gamma(\sn{i})-\g(t)\right)\nonumber\\
&-P_{\gstrichsn{j}}\left(\g(t)-\g(\sn{j})\right)
-P_{\gstrichsn{j}}\left(\g(\sn{i})-\g(t)\right)\nonumber\\
\beq[\eqref{proj}] &  P_{\gamma'(t)} \left(\gamma(s)-\gsn{i}\right) - P_{\gstrichsn{j}} \left( \gamma(t)-\gsn{j}\right) \nonumber\\
& +\left\langle \gsn{i} - \gamma(t),\gamma'(t)-\gstrichsn{j} \right\rangle \; \gamma'(t) \nonumber\\
& + \left\langle\gsn{i} - \gamma(t), \gstrichsn{j} \right\rangle \;\left( \gamma'(t) - \gstrichsn{j}\right) \label{proj_st_ij}.
\end{align}
In conclusion, by \eqref{dist_proj} and the elementary inequality
$||a|-|b||\le |a-b|$, this yields for the
expression $|d(t,s)-d(\sn{j},\sn{i})|$
(for $s\in [\sn{i},\sn{i+1})$ and $t\in [\sn{j},\sn{j+1})$) the
upper bound
$$
\left|P_{\gamma'(t)} \left(\gamma(s)\!-\!\gamma(t)\right)-P_{\gstrichsn{j}} \left(\gsn{i}\!-\!\gsn{j}\right)-\left(\gamma(s)\!-\!\gsn{i}\right)+\left(\gamma(t)\!-\!\gsn{j}\right)\right|,
$$
which in turn by means of \eqref{proj_st_ij} and \eqref{proj_dist}
can be bounded from
above by
\begin{align*}
 \big|P_{\gamma'(t)} &\left(\gamma(s)\!-\!\gsn{i}\right)
 \!-\!\left(\gamma(s)\!-\!\gsn{i}\right)\big|\!+\!\left|P_{\gstrichsn{j}} 
 \left( \gamma(t)\!-\!\gsn{j}\right)\!-\!\left(\gamma(t)\!-\!\gsn{j}\right)\right|\\
+\big|\big\langle \g&(\sn{i})\!-\!\gamma(t),\gamma'(t)
\!-\!\gstrichsn{j}\big\rangle\;\gamma'(t)\big|
+\left|\left\langle \gsn{i}\!-\!\gamma(t),\gstrichsn{j}\right\rangle \;(\gamma'(t)\!-\!\gstrichsn{j})\right|  \\
\bleq[\eqref{proj_dist}]& \left|\left(\gamma(s)-\gsn{i}\right)-(s-\sn{i})\gamma'(t)\right| + \left|\left(\gamma(t)-\gsn{j}\right)-(t-\sn{j})\gstrichsn{j}\right|\\
&+2\left|\gsn{i}-\gamma(t)\right|\left|\gstrichsn{j}-\gamma'(t)\right|.
\end{align*}
The last summand  is bounded by $2K|t-\sn{i}|_{\Sl}|t-\sn{j}|_{\Sl}$ since
$K=\|\g''\|_{L^\infty}$ and $1$ are  the Lipschitz constants of $\g'$ 
and $\g$, respectively.
The first summand on the right-hand side above
equals $|\int_{\sn{i}}^s\int_{t}^u \gamma''(v)\d v \d u |$ whereas the
second is bounded by
$\int_{\sn{j}}^t\int_{\sn{j}}^u |\gamma''(v)|\ d v \d u$, so that
we can summarize the estimate
\begin{align}
|d(t,s)&-d(\sn{j},\sn{i})|\bleq K |\sn{i}-s|_{\Sl} 
\max\left\{|s-t|_{\Sl},|t-\sn{i}|_{\Sl}\right\}\nonumber\\
&+ K|t-\sn{j}|_{\Sl}^2 
+2K|t-\sn{i}|_{\Sl}|t-\sn{j}|_{\Sl} \nonumber\\
\bleq& 2K \textstyle\max_{k=0, \dots, n-1} \diffsn{k} \nonumber\\
&\times\Big[\max\left\{|s-t|_{\Sl},|t-\sn{i}|_{\Sl}\right\} 
+ |t-\sn{j}|_{\Sl} + |t-\sn{i}|_{\Sl} \Big] \nonumber \\
\bleq[\eqref{absch_max}]& \textstyle
6 K\big(1+2\frac{c_2}{c_1}\big)
\left|\sn{i}-\sn{j}\right|_{\Sl} \max_{k=0, \dots, n-1} \diffsn{k}
.\label{absch_diff_dist}
\end{align}
Inserting \eqref{maxdist} and \eqref{absch_diff_dist} into 
\eqref{dist_hoch_q}
yields
\begin{equation}
\label{diff_g_final}
|d^q(t,s)-d^q(\sn{j},\sn{i})|\le {\textstyle
6qK^q \big(1+2\frac{c_2}{c_1}\big)^{2q-1}}\left|\sn{i}-\sn{j}\right|_{\Sl}^{2q-1}\max_{k=0, \dots, n-1} \diffsn{k}.
\end{equation}
In order to obtain an estimate for the denominator of $A_{i,j}$ we consider
\begin{align}
\label{nenner}
\textstyle
|\gamma(s)-\gamma(t)|^{2q}\bgeq[\eqref{bi_lipschitz}]
\frac{1}{c_\gamma^{2q}}|t-s|_{\Sl}^{2q} \bgeq[\eqref{absch_ij}] 
\big(\frac{c_1}{2c_2 c_\gamma}\big)^{2q}\left|\sn{i}-\sn{j}\right|_{\Sl}^{2q}.
\end{align}
Setting $C_A:= \frac{c_2^3}{c_1} 
6 q K^q (1+2\frac{c_2}{c_1})^{2q-1} (\frac{2c_2 c_\gamma}{c_1})^{2q}$
we obtain  from \eqref{diff_g_final} and \eqref{nenner}
\begin{align}
\label{aij}
|A_{i,j}|
&\textstyle\bleq[\eqref{diff_g_final},\eqref{nenner}] \quad \frac{c_1}{\left(c_2\right)^3} C_A \int_{\sn{j}}^{\sn{j+1}} \int_{\sn{i}}^{\sn{i+1}}\frac{\left|\sn{i}-\sn{j}\right|_{\Sl}^{2q-1}\underset{k=0, \dots, n-1}{\max}\diffsn{k}}{\left|\sn{i}-\sn{j}\right|_{\Sl}^{2q}}\d s \d t\nonumber\\
&\bleq\textstyle \frac{c_1}{\left(c_2\right)^3} C_A \left(\max_{k=0, \dots, n-1}\diffsn{k}\right)^3\frac{1}{\left|\sn{i}-\sn{j}\right|_{\Sl}} \nonumber\\
&\bleq[\eqref{prop_part}]\textstyle
c_1 C_A \frac{1}{n^3}\frac{1}{|i-j|_n\underset{k=0, \dots, n-1}{\min}\diffsn{k}}
\bleq[\eqref{prop_part}] C_A \frac{1}{n^2} \frac{1}{|i-j|_n}.
\end{align}

\textbf{Step 6:} To estimate $B_{i,j}$, we use \eqref{a_b_hoch_q}
and twice \eqref{absch_ij} leading to
\begin{align}
&\left|\diffgamma{\sn{i}}{\sn{j}}^{2q}-\diffgamma{s}{t}^{2q}\right| \nonumber\\
\beq&\big|\diffgamma{\sn{i}}{\sn{j}}^{q}+\diffgamma{s}{t}^{q}\big|\big|\diffgamma{\sn{i}}{\sn{j}}^{q}-\diffgamma{s}{t}^{q}\big|\nonumber\\
\bleq[\eqref{a_b_hoch_q}]& q\big(|\sn{i}-\sn{j}|_{\Sl}^q+|t-s|_{\Sl}^q\big)\big|\diffgamma{\sn{i}}{\sn{j}}-\diffgamma{s}{t}\big|\nonumber \\
&\times\max\big\{\diffgamma{s}{t},\diffgamma{\sn{i}}{\sn{j}}\big\}^{q-1} \nonumber\\
\bleq[\eqref{absch_ij}]& \textstyle
2q \big(1+2\frac{c_2}{c_1}\big)^q|\sn{i}-\sn{j}|_{\Sl}^q \big|\gsn{i}-\gamma(s)+\gamma(t)-\gsn{j}\big| \nonumber\\
&\times\max\big\{|t-s|_{\Sl},|\sn{i}-\sn{j}|_{\Sl}\big\}^{q-1} \nonumber\\
\bleq[\eqref{absch_ij}]& \textstyle
2q \big(1+2\frac{c_2}{c_1}\big)^{2q-1}|\sn{i}-\sn{j}|_{\Sl}^{2q-1}\big(\diffgamma{s}{\sn{i}}+\diffgamma{t}{\sn{j}}\big) \nonumber \\
\bleq& 2q \textstyle\big(1+2\frac{c_2}{c_1}\big)^{2q-1}
|\sn{i}-\sn{j}|_{\Sl}^{2q-1} \big(|s-\sn{i}|_{\Sl}+|t-\sn{j}|_{\Sl}\big) \nonumber\\
\bleq & 4q\textstyle
\big(1+2\frac{c_2}{c_1}\big)^{2q-1}|\sn{i}-\sn{j}|_{\Sl}^{2q-1} \max_{k=0, \dots, n-1}\diffsn{k}. \label{absch_diffg_hoch_q}
\end{align}
Thus, by \eqref{dist_ij}, \eqref{absch_diffg_hoch_q}, and \eqref{nenner},
\begin{align}
|B_{i,j}|&
\bleq[\eqref{dist_ij}] \textstyle
K^q |\sn{i}-\sn{j}|_{\Sl}^{2q} \int_{\sn{j}}^{\sn{j+1}} \int_{\sn{i}}^{\sn{i+1}} \left|\frac{\diffgamma{\sn{i}}{\sn{j}}^{2q}-\diffgamma{s}{t}^{2q}}{\diffgamma{\sn{i}}{\sn{j}}^{2q}\diffgamma{s}{t}^{2q}}\right| \d s \d t \nonumber\\
\bleq[\eqref{absch_diffg_hoch_q}]& \textstyle
4q K^q \big(1+2\frac{c_2}{c_1}\big)^{2q-1}|\sn{i}-\sn{j}|_{\Sl}^{4q-1}\max_{k=0, \dots, n-1}\diffsn{k} \nonumber\\
&\textstyle
\times \int_{\sn{j}}^{\sn{j+1}} \int_{\sn{i}}^{\sn{i+1}}  \frac{1}{\diffgamma{\sn{i}}{\sn{j}}^{2q}\diffgamma{s}{t}^{2q}} \d s \d t \nonumber\\
\bleq[\eqref{nenner}]& \textstyle\big(\frac{2c_2 c_\gamma}{c_1}
\big)^{2q}c_\gamma^{2q} 4q K^q \big(1+2\frac{c_2}{c_1}
\big)^{2q-1}|\sn{i}-\sn{j}|_{\Sl}^{4q-1} \max_{k=0, \dots, n-1}\diffsn{k} \nonumber\\
&\textstyle \times \int_{\sn{j}}^{\sn{j+1}} \int_{\sn{i}}^{\sn{i+1}} \frac{1}{|\sn{i}-\sn{j}|_{\Sl}^{4q}} \d s \d t\nonumber\\
\bleq&\textstyle\big(\frac{2c_2 c_\gamma}{c_1}\big)^{2q}c_\gamma^{2q} 4q K^q\big(1+2\frac{c_2}{c_1}\big)^{2q-1} \big(\max_{k=0, \dots, n-1}\diffsn{k}\big)^3 \frac{1}{|\sn{i}-\sn{j}|_{\Sl}}\nonumber\\
\bleq[\eqref{prop_part}]& \textstyle\big(c_2\big)^3 
\big(\frac{2c_2 c_\gamma}{c_1}\big)^{2q}c_\gamma^{2q} 4q K^q
\big(1+2\frac{c_2}{c_1}\big)^{2q-1} \frac{1}{n^3} \frac{1}{|i-j|_n\underset{k=0, \dots, n-1}{\min}\diffsn{k}} 
\bleq[\eqref{prop_part}]\frac{C_B}{n^2}\frac{1}{|i-j|_n} \label{bij}
\end{align}
with $C_B:=\frac{(c_2)^3}{c_1} \big(\frac{2c_2 c_\gamma}{c_1}
\big)^{2q}c_\gamma^{2q} 4q K^q\big(1+2\frac{c_2}{c_1}\big)^{2q-1}$.

\textbf{Step 7:} The expression $\sum_{k=1}^n \frac{1}{k}-\ln(n)$ converges for $n \to \infty$ to the Euler-Mascheroni constant;
see \cite[(20.8) on p. 245]{forster_2011}. 
Thus, there exists a constant $c_l\in (0,\infty)$ such that 
$\left|\sum_{k=1}^n \frac{1}{k}-\ln(n)\right|\leq c_l$ for all 
$n \in \N$. Furthermore,  $\lim_{n \to \infty}\frac{\ln(n)}{n^\varepsilon}=0$ holds for all $\varepsilon>0$. 
Hence, for every $\varepsilon>0$ there exists a constant 
$\tilde{c}_\varepsilon \in (0,\infty)$ with $|\frac{\ln(n)}{n^\varepsilon}|\leq \tilde{c}_\varepsilon$ for all $n \in \N$. Combining these results leads to
\begin{align}
\label{sumAB}
\textstyle\sum_{i=0}^{n-1} &\textstyle\sum_{j, |i-j|_n>\Upsilon} 
\left(\left|A_{i,j}\right| + \left|B_{i,j}\right|\right)
\bleq[\eqref{aij},\eqref{bij}] \; \; \; \frac{2\max\{C_A,C_B\}}{n^2}\sum_{i=0}^{n-1} \sum_{j, |i-j|_n>\Upsilon} \frac{1}{|i-j|_n} \nonumber\\
&\bleq \textstyle
\frac{4\max\{C_A,C_B\}}{n^2}\sum_{i=0}^{n-1} \sum_{k=1}^n \frac{1}{k}\nonumber\\
&\beq\textstyle\frac{4\max\{C_A,C_B\}}{n}\left(\sum_{k=1}^n \frac{1}{k}-\ln(n)\right)+\frac{4\max\{C_A,C_B\}}{n^{1-\varepsilon}}\frac{\ln(n)}{n^\varepsilon}
%\nonumber\\
%&
\bleq\frac{C_{AB}}{n^{1-\varepsilon}}
\end{align}
with $C_{AB}:=8 \max\{c_l,\tilde{c}_\varepsilon\}\max\{C_A,C_B\}$.

\textbf{Step 8:} Since $\gamma \in C^{1,1}$, by \cite[Lemma 5.8]{smutny_2004}
\begin{align*}
\lambda_{n,j}-|\sn{j+1}-\sn{j}|=O\big(\left|\sn{j+1}-\sn{j}\right|^3\big) \text{ as }  n \to \infty\,\Foa j=0, \dots, n-1.
\end{align*}
Accordingly, there exists an $N \in \N$ and a constant $c_{N}\in (0,\infty)$ such that
\begin{align}
\label{Ordnung_lambda}
{\textstyle
\big|\lambda_{n,j}-|\sn{j+1}-\sn{j}|\big|\leq c_{N} \left|\sn{j+1}-\sn{j}
\right|^3} \leq  c_{N} \big(\max_{k=0, \dots, n-1}\diffsn{k}\big)^3
\end{align}
holds for all $n \geq N$. 
By \eqref{absch_bruch} and \eqref{lambda}  we obtain from 
\eqref{Ordnung_lambda}
\begin{align}
\label{cij}
|C_{i,j}|\bleq&
\textstyle
\frac{\distg{\sn{i}}{\sn{j}}^q}{\diffgamma{\sn{i}}{\sn{j}}^{2q}}\big|\left|\sn{i+1}-\sn{i}\right|\left|\sn{j+1}-\sn{j}\right|-\lambda_{n,i}\lambda_{n,j}\big|\nonumber\\
\bleq[\eqref{absch_bruch}]& \left(c_\gamma^2 K\right)^q \big[ \left|\sn{i+1}-\sn{i}\right|\big|\left|\sn{j+1}-\sn{j}\right|-\lambda_{n,j}\big|+\lambda_{n,j}\big|\left|\sn{i+1}-\sn{i}\right|-\lambda_{n,i}\big|\,\big]\nonumber\\
\bleq[\eqref{lambda}]& \left(c_\gamma^2 K\right)^q c_\lambda \big[ \left|\sn{i+1}-\sn{i}\right|\big|\left|\sn{j+1}-\sn{j}\right|-\lambda_{n,j}\big| \nonumber \\
&+\left|\sn{j+1}-\sn{j}\right|\big|\left|\sn{i+1}-\sn{i}\right|-\lambda_{n,i}\big|\big]\nonumber\\
\bleq[\eqref{Ordnung_lambda}]&\left(c_\gamma^2 K\right)^q c_\lambda c_{N} \big(\max_{k=0, \dots, n-1}\diffsn{k}\big)^3\big[\left|\sn{i+1}-\sn{i}\right|+\left|\sn{j+1}-\sn{j}\right|\big]\nonumber\\
\bleq& 2 \left(c_\gamma^2 K\right)^q c_\lambda c_{N} \big(\max_{k=0, \dots, n-1}\diffsn{k}\big)^4 
\bleq[\eqref{prop_part}]\frac{C_C}{n^4}
\end{align}
with $C_C:=2 \left(c_2\right)^4 \left(c_\gamma^2 K\right)^q c_\lambda c_{N}$ for all $n \geq N$. We then conclude
\begin{align}
\label{sumC}
\textstyle
\sum_{i=0}^{n-1} \sum_{j, |i-j|_n>\Upsilon} |C_{i,j}|\bleq[\eqref{cij}] \frac{C_C}{n^4}\sum_{i=0}^{n-1} \sum_{j=0}^{n-1} 1 = \frac{C_C}{n^2}.
\end{align}

\textbf{Step 9:} Inserting 
\eqref{Term1}, \eqref{Term2}, \eqref{sumAB} and \eqref{sumC} into
\eqref{zerl_energien} yields
\begin{align*}
\big| \TP_q\left(\gamma\right)-\tilde{\E}_q^n\left(\beta_n\right)\big|
\leq& \textstyle
\frac{C_{1}}{n}+ \frac{C_{2}}{n}+2^q\big( \frac{C_{AB}}{n^{1-\varepsilon}} + \frac{C_C}{n^2}\big)
\leq \textstyle
\frac{C_\varepsilon}{n^{1-\varepsilon}}
\end{align*}
with $C_\varepsilon:=4\max\{C_{1},C_{2},2^q C_{AB}, 2^q C_C\}$, which gives the desired result.
\end{proof}
\end{theorem}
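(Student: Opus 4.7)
The plan is to write the difference $\TP_q(\gamma)-\tilde{\E}_q^n(\beta_n)$ as a double sum over pairs of subintervals $[\sn{i},\sn{i+1})\times[\sn{j},\sn{j+1})$, in which each contribution of the continuous energy is compared with the corresponding discrete summand. I would split this sum into a near-diagonal regime $|i-j|_n\le\Upsilon$ for a suitable constant $\Upsilon$ depending on $c_2/c_1$, and a far-diagonal regime $|i-j|_n>\Upsilon$. The near-diagonal regime collects only $O(n)$ many pairs, while the far-diagonal regime collects $O(n^2)$ pairs but features much smaller individual contributions.

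For the near-diagonal regime I would exploit that $\gamma\in C^{1,1}_{\textnormal{ia}}$ renders the tangent-point integrand $\bigl(2\dist(l(\gamma(t)),\gamma(s))/|\gamma(s)-\gamma(t)|^2\bigr)^q$ uniformly bounded: the numerator is of order $|s-t|_{\Sl}^2$ by a second-order Taylor expansion using $K:=\|\gamma''\|_{L^\infty}<\infty$, while the denominator is comparable to $|s-t|_{\Sl}^2$ from below by the bi-Lipschitz estimate $|s-t|_{\Sl}\le c_\gamma|\gamma(s)-\gamma(t)|$ available for any injective $C^1$-curve on a compact periodic domain. With this uniform bound both the continuous and the discrete near-diagonal contributions are $O(1/n)$, using Lemma \ref{Konv_max_bruch_lamda} to compare $\lambda_{n,i}$ with $|\sn{i+1}-\sn{i}|$.

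The far-diagonal regime is the genuinely quantitative part. I would freeze the corner value at $(\sn{i},\sn{j})$ and split the integrand-minus-summand into three pieces: a term $A_{i,j}$ from the numerator difference $\dist^q(l(\gamma(t)),\gamma(s))-\dist^q(l(\gamma(\sn{j})),\gamma(\sn{i}))$, a term $B_{i,j}$ from the reciprocal denominator difference, and a term $C_{i,j}$ from the mismatch $\lambda_{n,i}\lambda_{n,j}$ versus $(\sn{i+1}-\sn{i})(\sn{j+1}-\sn{j})$. The elementary inequality $|b^q-a^q|\le q|b-a|\max(a,b)^{q-1}$ reduces $A_{i,j}$ and $B_{i,j}$ to estimating the linear differences of the distance-to-tangent-line and of $|\gamma(s)-\gamma(t)|$. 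The latter is straightforward from $\|\gamma'\|_{L^\infty}=1$ together with $K=\|\gamma''\|_{L^\infty}$. The subtle piece is the line-distance difference: I expect to represent $\dist(l(\gamma(t)),\gamma(s))=|P_{\gamma'(t)}(\gamma(s)-\gamma(t))-(\gamma(s)-\gamma(t))|$ via the orthogonal projection $P_{\gamma'(t)}$ onto $\R\gamma'(t)$, then add and subtract $P_{\gamma'(t)}(\gamma(\sn{i})-\gamma(t))$ and $P_{\gamma'(\sn{j})}(\gamma(\sn{i})-\gamma(\sn{j}))$ to exploit linearity of the projection. Combined with the Lipschitz bound $|\gamma'(t)-\gamma'(\sn{j})|\le K|t-\sn{j}|_{\Sl}$ and the two-sided comparison $|t-s|_{\Sl}\asymp|\sn{i}-\sn{j}|_{\Sl}$ valid in the far-diagonal regime, each of $|A_{i,j}|$ and $|B_{i,j}|$ should be bounded by $O(1/(n^2|i-j|_n))$. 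For $C_{i,j}$ I would invoke \cite[Lemma 5.8]{smutny_2004} to get $|\lambda_{n,i}-(\sn{i+1}-\sn{i})|=O((\sn{i+1}-\sn{i})^3)$, hence $|C_{i,j}|=O(1/n^4)$ and $\sum_{i,j}|C_{i,j}|=O(1/n^2)$.

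The main obstacle is precisely the projection-comparison step for the line-distance difference, where the two different tangent directions $\gamma'(t)$ and $\gamma'(\sn{j})$ generate two distinct orthogonal projection operators; one must rewrite the quantity so that the second-order Taylor remainders and the first-order tangent differences decouple cleanly. Once that estimate is secured, summing the remaining contributions amounts to a harmonic sum: $\sum_{|i-j|_n>\Upsilon}1/(n^2|i-j|_n)\lesssim (1/n)\sum_{k=1}^n 1/k$. Since $\sum_{k=1}^n 1/k=\ln n+O(1)$ and $\ln n=O(n^\varepsilon)$ for every $\varepsilon>0$, this yields the claimed bound $C_\varepsilon/n^{1-\varepsilon}$. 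The logarithm is exactly what prevents a linear rate in $1/n$ and matches the convergence rate obtained by Scholtes for the polygonal M\"obius energy.
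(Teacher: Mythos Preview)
Your proposal is correct and follows essentially the same approach as the paper's proof: the same near/far-diagonal split at a threshold $\Upsilon\asymp c_2/c_1$, the same uniform bound on the integrand via bi-Lipschitz plus $C^{1,1}$ for the near-diagonal $O(1/n)$ contribution, the same $A_{i,j}+B_{i,j}+C_{i,j}$ decomposition in the far regime, the same power-difference inequality, the same orthogonal-projection representation of the line distance with the add-and-subtract trick across the two tangent directions, the same invocation of \cite[Lemma~5.8]{smutny_2004} for $C_{i,j}$, and the same harmonic-sum endgame yielding the $\ln n$ loss absorbed into $n^\varepsilon$. You have even correctly flagged the projection-comparison step as the delicate point, which matches the paper's Step~5.
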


\section{$\Gamma$-convergence to the continuous tangent-point energy}
\label{gamma_section}
In the present section,  we show 
that the continuous tangent-point energy $\TP_q$
is the 
$\Gamma$-limit of the discrete tangent-point energies
$\E_q^n$ as $n\to\infty$ (see Theorem \ref{gamma_convergence}). 
As a consequence, we deduce that limits of discrete almost minimizers 
are minimizers of the continuous tangent-point energy;
see Corollary \ref{cor:convergence-mini-tp}.

{\bf $\Gamma$-convergence.}\,
In order to prove Theorem \ref{gamma_convergence} we need to verify 
the liminf and limsup inequality, 
see \cite[Definition 1.5]{braides_2002}. Here, the liminf inequality
is verified in a rather straightforward manner (Theorem 
\ref{thm:liminfineq}),
 whereas the proof of the limsup inequality requires more
work; see Theorem \ref{limsup_proof} below.

\begin{theorem}[Liminf inequality]\label{thm:liminfineq}
Let $\gamma,\g_n \in C^1_\textnormal{a}\left(\Sl,\R^3\right)$ 
with $\gamma_n \stackrel{C^1}{\longrightarrow} \gamma$ as $n \to \infty$. Then
$
\TP_q(\gamma)\leq \liminf_{n \to \infty} \E_q^n(\gamma_n).
$
\begin{proof}
We may assume that $\liminf_{n \to \infty} \E_q^n(\gamma_n) < \infty$. Then there exists a subsequence $\left(\gamma_{n_k}\right)_{k \in\N}$ satisfying
$
\liminf_{n \to \infty} \E_q^n(\gamma_n)=\lim_{k \to \infty} \E_q^{n_k} \left(\gamma_{n_k}\right)<\infty.
$
By definition of $\E_q^{n_k}$ we deduce $\gamma_{n_k} \in \B_{n_k}$ 
for all $k \in \N$; see \eqref{eq:discrete-tan-point} in the 
introduction.
Denote the point-tangent pairs that are interpolated by $\gamma_{n_k}$  
as $\left(\left(\left[q_{{n_k},i},t_{{n_k},i}\right],\left[q_{{n_k},i+1},t_{{n_k},i+1}\right]\right)\right)_{i=0, \dots, {n_k}-1}$,
with $q_{{n_k},0}=q_{{n_k},{n_k}}$ and $t_{{n_k},0}=t_{{n_k},{n_k}}$
for each $k\in\N$. 
Furthermore, we denote by $a_{{n_k},0}, \dots, a_{{n_k},{n_k}}$ the arclength parameters satisfying $\gamma_{n_k}(a_{{n_k},i})=q_{{n_k},i}$ and $|a_{{n_k},i+1}-a_{{n_k},i}|=\lambda_{{n_k},i}$ for all $i=0, \dots, {n_k}-1$. Define for all $s,t \in \Sl$ with $s\neq t$ the function
\begin{align*}
f_{n_k}(s,t):=
\sum_{i=0}^{{n_k}-1} \sum_{j=0, j\neq i}^{{n_k}-1} \Big(2\frac{\dist\left(l(a_{{n_k},j}),\gamma_{n_k}(a_{{n_k},i})\right)}{|\gamma_{n_k}(a_{{n_k},i})-\gamma_{n_k}(a_{{n_k},j})|^2}\Big)^q \chi_{[a_{{n_k},i},a_{{n_k},i+1})\times[a_{{n_k},j},a_{{n_k},j+1})}(s,t),
\end{align*}
where $\chi_A$ denotes the characteristic function of a set
$A\subset\R/L\Z\times \R/L\Z.$
Easy calculations  show that 
\begin{align*}
\lim_{k \to \infty} f_{n_k}(s,t)=\textstyle
\left(\frac{2\dist(l(t),\gamma(s))}{|\gamma(s)-\gamma(t)|^2}\right)^q
\quad\Foa s\not= t.
\end{align*}
The functions $f_{n_k}$ are non-negative, and measurable since they 
are piecewise constant. Rewriting the discrete tangent-point energies 
as
$
\E^n_q(\g_{n_k})=\textstyle
\int_{\R/L\Z}\int_{\R/L\Z}f_{n_k}(s,t)\d s \d t,
$
allows us to apply Fatou's lemma to obtain the desired liminf inequality.
\end{proof}
\end{theorem}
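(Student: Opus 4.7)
The plan is to recast each discrete energy as an integral of a piecewise constant function on $(\R/L\Z)^2$ and then apply Fatou's lemma. First I would reduce to the case $\liminf_n\E_q^n(\gamma_n)<\infty$ and extract a subsequence $(\gamma_{n_k})$ that achieves the liminf with finite values; the definition of $\E_q^n$ then forces $\gamma_{n_k}\in\B_{n_k}$ for every $k$. Letting $0=a_{n_k,0}<\dots<a_{n_k,n_k}=L$ denote the arclength parameters at the junctions of the biarc curve $\gamma_{n_k}$ (so $\gamma_{n_k}(a_{n_k,i})=q_{n_k,i}$, $\gamma_{n_k}'(a_{n_k,i})=t_{n_k,i}$ and $a_{n_k,i+1}-a_{n_k,i}=\lambda_{n_k,i}$), I would introduce $f_{n_k}:(\R/L\Z)^2\to[0,\infty)$ as the piecewise constant function equal to $(2\dist(l(q_{n_k,j}),q_{n_k,i})/|q_{n_k,i}-q_{n_k,j}|^2)^q$ on the rectangle $[a_{n_k,i},a_{n_k,i+1})\times[a_{n_k,j},a_{n_k,j+1})$ for $i\neq j$ and zero otherwise. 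By construction
\begin{equation*}
\iint_{(\R/L\Z)^2} f_{n_k}(s,t)\,\d s\,\d t=\E_q^{n_k}(\gamma_{n_k}),
\end{equation*}
so the liminf inequality becomes a lower semicontinuity statement for an integral.

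The core analytic step is the a.e.\ pointwise convergence
\begin{equation*}
f_{n_k}(s,t)\longrightarrow \Bigl(\tfrac{2\dist(l(t),\gamma(s))}{|\gamma(s)-\gamma(t)|^2}\Bigr)^{\!q}\quad\text{for $s\neq t$.}
\end{equation*}
The membership $\gamma_{n_k}\in\B_{n_k}$ forces $\lambda_{n_k,i}\le 2L/n_k\to 0$, so for fixed $(s,t)$ the indices $i_k,j_k$ with $s\in[a_{n_k,i_k},a_{n_k,i_k+1})$ and $t\in[a_{n_k,j_k},a_{n_k,j_k+1})$ satisfy $a_{n_k,i_k}\to s$ and $a_{n_k,j_k}\to t$. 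The uniform $C^1$-convergence $\gamma_{n_k}\to\gamma$, together with continuity of $\gamma$ and $\gamma'$, then yields $q_{n_k,i_k}\to\gamma(s)$, $t_{n_k,i_k}\to\gamma'(s)$, and similarly at the $j_k$-indices. Since $\gamma$ is injective and $C^1$, hence bilipschitz (Lemma \ref{lem:bilipschitz}), the denominator $|\gamma_{n_k}(a_{n_k,i_k})-\gamma_{n_k}(a_{n_k,j_k})|^2$ stays bounded away from zero near $(s,t)$, and joint continuity of $(x,y,v)\mapsto \dist(y+\R v,x)/|x-y|^2$ on $\{x\neq y,\,|v|=1\}$ delivers the claimed limit.

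Since each $f_{n_k}$ is non-negative and piecewise constant, hence measurable, Fatou's lemma yields
\begin{equation*}
\TP_q(\gamma)=\iint_{(\R/L\Z)^2}\liminf_{k\to\infty} f_{n_k}\,\d s\,\d t\le \liminf_{k\to\infty}\iint_{(\R/L\Z)^2} f_{n_k}\,\d s\,\d t=\liminf_{n\to\infty}\E_q^n(\gamma_n),
\end{equation*}
as desired. The main obstacle I anticipate is the pointwise convergence step: it hinges on the simultaneous vanishing of the mesh (from the $\B_{n_k}$-constraint) and the $C^1$-convergence of both the curves and their tangents, together with the quantitative non-degeneracy of the denominator that comes from the bilipschitz character of the injective $C^1$-limit $\gamma$. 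Everything else is essentially bookkeeping.
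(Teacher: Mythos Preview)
Your proposal is correct and follows essentially the same approach as the paper: reduce to a subsequence in $\B_{n_k}$, rewrite $\E_q^{n_k}(\gamma_{n_k})$ as the integral of a piecewise constant function $f_{n_k}$, establish a.e.\ pointwise convergence of $f_{n_k}$ to the $\TP_q$-integrand, and conclude via Fatou. You actually supply more detail than the paper (which dismisses the pointwise convergence as ``easy calculations''), correctly tracing it to the mesh bound $\lambda_{n_k,i}\le 2L/n_k$ from \eqref{eq:biarc-lengths} together with the $C^1$-convergence; note only that your appeal to the bilipschitz Lemma~\ref{lem:bilipschitz} tacitly uses injectivity of $\gamma$, which is not part of the hypothesis $C^1_\textnormal{a}$ as stated, though it is present in the intended application to $C^1_\textnormal{ia}$ in Theorem~\ref{gamma_convergence}.
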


An important first ingredient in the proof of the limsup inequality 
is the use of 
convolutions 
\begin{align}\label{eq:convolution}
\gamma_\varepsilon(x):=\left(\gamma \ast \eta_\varepsilon\right)(x)=
\textstyle
\int_\R \gamma(x-y)\eta_\varepsilon(y)\d y \quad\Fo x\in\Sl,
\end{align}
that approximate $\gamma$ in the $C^1$-norm. Here,
$\eta \in C^{\infty}\left(\R\right)$ is a non-negative
mollifier with $\supp \eta \subset [-1,1]$ and $\int_{\R} \eta(x)\d x =1$, 
and for any $\varepsilon>0$ we 
set $\eta_\varepsilon(x):=\frac{1}{\varepsilon}\eta\left(\frac{x}{\varepsilon} \right)$.

In general, the convolutions are not parametrized by arclength even
if $\g$ is, and they
do not need to have the same length as $\gamma$. Thus, we rescale 
the convolutions to have the same length as $\gamma$ and reparametrize
then according to arclength. The following theorem extends
\cite[Theorem 1.3]{blatt_2019b} to the case $\rho\ge\frac1s$.
A proof can be found in Appendix 
\ref{proof_convolution_convergence}. 

\begin{theorem}
\label{konv_faltung}
Let $s\in (0,1)$, $\rho \in [\frac1s,\infty)$, and $\gamma \in W_\textnormal{a}^{1+s,\rho} \left(\Sl,\R^3\right)$. For $\varepsilon>0$ denote by $\tilde{\gamma}_\varepsilon$ be the arclength 
parametrization of the rescaled convolutions
$\frac{\mathscr{L}\left(\gamma\right)}{\mathscr{L}\left(\gamma_\varepsilon\right)}\gamma_\varepsilon$ with $\tilde{\gamma}_\varepsilon(0)=\frac{\mathscr{L}\left(\gamma\right)}{\mathscr{L}\left(\gamma_\varepsilon\right)}\gamma_\varepsilon(0)$ where $\mathscr{L}\left(\gamma\right)$ is the length of $\gamma$ and $\mathscr{L}\left(\gamma_\varepsilon\right)$ is the length of $\gamma_\varepsilon$. Then $\tilde{\gamma}_\varepsilon \to\g$
in $W^{1+s,\rho}$ as $\varepsilon \to 0.$
\end{theorem}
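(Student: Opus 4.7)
I would factor the passage from $\gamma$ to $\tilde\gamma_\varepsilon$ into three moves: (i) mollification $\gamma\mapsto\gamma_\varepsilon$; (ii) length rescaling $\gamma_\varepsilon\mapsto\hat\gamma_\varepsilon:=c_\varepsilon\gamma_\varepsilon$ with $c_\varepsilon:=\mathscr{L}(\gamma)/\mathscr{L}(\gamma_\varepsilon)$; and (iii) arclength reparametrization $\hat\gamma_\varepsilon\mapsto\tilde\gamma_\varepsilon=\hat\gamma_\varepsilon\circ\phi_\varepsilon$, where $\phi_\varepsilon$ is the inverse of the arclength function of $\hat\gamma_\varepsilon$. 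The goal is to establish $W^{1+s,\rho}$-convergence after each move. The hypothesis $s\rho\geq 1$ enters through the Sobolev embedding $W^{s,\rho}(\Sl)\hookrightarrow C^{0,s-1/\rho}$ in the supercritical regime $s\rho>1$ (the borderline case $s\rho=1$ is handled by an analogous VMO-type argument and approximation), which upgrades $W^{1+s,\rho}$-convergence of the mollifications to $C^1$-convergence and thereby yields uniform control of the reparametrization.

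\textbf{Steps (i) and (ii).} Standard mollifier estimates yield $\gamma_\varepsilon\to\gamma$ in $W^{1+s,\rho}(\Sl,\R^3)$, and the above embedding upgrades this to $\gamma_\varepsilon\to\gamma$ in $C^1$. In particular $|\gamma_\varepsilon'|\to 1$ uniformly, so $\mathscr{L}(\gamma_\varepsilon)\to L=\mathscr{L}(\gamma)$ and $c_\varepsilon\to 1$. Linearity of the norm in the scalar factor then gives $\hat\gamma_\varepsilon\to\gamma$ in $W^{1+s,\rho}$ and in $C^1$, with the additional normalization $\mathscr{L}(\hat\gamma_\varepsilon)=L$.

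\textbf{Step (iii): reparametrization.} I would set $\psi_\varepsilon(t):=\int_0^t|\hat\gamma_\varepsilon'(\tau)|\,d\tau$. Because $|\hat\gamma_\varepsilon'|\to 1$ uniformly, $\psi_\varepsilon$ is a $C^\infty$-diffeomorphism of $\Sl$ close to the identity for small $\varepsilon$, and its inverse $\phi_\varepsilon$ satisfies $\phi_\varepsilon\to\id$ in $C^1$. A direct computation yields
\begin{equation*}
\tilde\gamma_\varepsilon'(x) = \frac{\hat\gamma_\varepsilon'(\phi_\varepsilon(x))}{|\hat\gamma_\varepsilon'(\phi_\varepsilon(x))|},
\end{equation*}
so proving $\tilde\gamma_\varepsilon\to\gamma$ in $W^{1+s,\rho}$ reduces to proving that this expression converges to $\gamma'$ in $W^{s,\rho}$.

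\textbf{Main obstacle: composition in $W^{s,\rho}$.} I would split
\begin{equation*}
\tilde\gamma_\varepsilon'-\gamma' = \Bigl(\tfrac{\hat\gamma_\varepsilon'}{|\hat\gamma_\varepsilon'|}-\gamma'\Bigr)\circ\phi_\varepsilon + \bigl(\gamma'\circ\phi_\varepsilon-\gamma'\bigr).
\end{equation*}
For the first summand, a change of variables in the Gagliardo double integral defining $[\cdot]_{W^{s,\rho}}$ gives $\|f\circ\phi_\varepsilon\|_{W^{s,\rho}}\leq (1+o(1))\|f\|_{W^{s,\rho}}$, since $\phi_\varepsilon$ is bi-Lipschitz with constants $1+o(1)$; combined with Lipschitz continuity of $v\mapsto v/|v|$ on $\{|v|\geq 1/2\}$, this term is controlled by $\|\hat\gamma_\varepsilon'-\gamma'\|_{W^{s,\rho}}\to 0$. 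The genuinely delicate part is the second summand, because $C^1$-closeness of $\phi_\varepsilon$ to $\id$ does not imply $\gamma'\circ\phi_\varepsilon\to\gamma'$ in $W^{s,\rho}$ when $\gamma'$ is only of fractional regularity. I would treat it by a density argument: for $\delta>0$, pick $g_\delta\in C^\infty(\Sl,\R^3)$ with $\|g_\delta-\gamma'\|_{W^{s,\rho}}<\delta$, apply the triangle inequality with $g_\delta$ and $g_\delta\circ\phi_\varepsilon$ inserted, bound $\|(\gamma'-g_\delta)\circ\phi_\varepsilon\|_{W^{s,\rho}}$ by $(1+o(1))\delta$ using the bi-Lipschitz composition bound, and for fixed $\delta$ use that the Gagliardo integrand of $g_\delta\circ\phi_\varepsilon-g_\delta$ is controlled pointwise by $\Lip(g_\delta)\,\|\phi_\varepsilon-\id\|_{C^0}$, so that $\|g_\delta\circ\phi_\varepsilon-g_\delta\|_{W^{s,\rho}}\to 0$ as $\varepsilon\to 0$ by dominated convergence. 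Sending first $\varepsilon\to 0$ and then $\delta\to 0$ completes the proof.
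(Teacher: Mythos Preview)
Your three-move decomposition (mollify, rescale, reparametrize) and the reduction of the problem to seminorm convergence via the arclength change of variables match the paper's setup. The paper, however, treats the seminorm step by applying Vitali's theorem directly to the Gagliardo integrand $I_\varepsilon(x,y)$ of $\tilde\gamma_\varepsilon'-\gamma'$: pointwise convergence comes from $C^1$-convergence, and uniform integrability from the bound $I_\varepsilon\le 2^{\rho-1}(J_\varepsilon+K)$, where $K$ is the fixed integrable Gagliardo integrand of $\gamma'$ and $J_\varepsilon\le c\,A_\varepsilon\circ\psi_\varepsilon$ with $A_\varepsilon$ the Gagliardo integrand of $\gamma_\varepsilon'$. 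Since $\gamma_\varepsilon\to\gamma$ in $W^{1+s,\rho}$, the family $(A_\varepsilon)$ is uniformly integrable, and the uniformly bi-Lipschitz change of variables $\psi_\varepsilon$ preserves this. No Nemytskii continuity and no density argument are needed.

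Your alternative splitting has two concrete gaps in execution. First, for the term $(F(\hat\gamma_\varepsilon')-\gamma')\circ\phi_\varepsilon$ with $F(v)=v/|v|$, Lipschitz continuity of $F$ alone does \emph{not} give $\|F(u)-F(w)\|_{W^{s,\rho}}\lesssim\|u-w\|_{W^{s,\rho}}$: one has $[F(u)]_{s,\rho}\le\Lip(F)[u]_{s,\rho}$, but the difference $F(u)-F(w)$ admits no such bound (take $F(t)=|t|$ with $u-w$ constant and $u$ changing sign). You can repair this by exploiting that $F$ is $C^{1,1}$ on $\{|v|\ge\tfrac12\}$ together with the available $L^\infty$-convergence: writing $F(u)-F(w)=G(u,w)(u-w)$ with $G$ Lipschitz yields a bound of the form $[F(u)-F(w)]_{s,\rho}\lesssim [u-w]_{s,\rho}+\|u-w\|_\infty([u]_{s,\rho}+[w]_{s,\rho})$. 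Alternatively one proves Nemytskii continuity via Vitali, which is essentially the paper's route. Second, for $h_\varepsilon:=g_\delta\circ\phi_\varepsilon-g_\delta$, the Gagliardo integrand is \emph{not} pointwise bounded by $\Lip(g_\delta)\|\phi_\varepsilon-\id\|_{C^0}$: that bound on $\|h_\varepsilon\|_\infty$ still leaves the non-integrable factor $|x-y|^{-1-s\rho}$, so dominated convergence fails as stated. You need the Lipschitz estimate $|h_\varepsilon(x)-h_\varepsilon(y)|\le\|h_\varepsilon'\|_\infty|x-y|$, and $\|h_\varepsilon'\|_\infty\to 0$ requires $\phi_\varepsilon\to\id$ in $C^1$ (not just $C^0$) together with uniform continuity of $g_\delta'$; the resulting integrand $\|h_\varepsilon'\|_\infty^\rho|x-y|^{\rho-1-s\rho}$ is integrable since $s<1$.
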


The following abstract lemma provides sufficient conditions to
transfer the limsup inequality from approximating elements
to the limit element. This result applied to smooth
convolutions approximating a given $C^1$-curve $\g$ will be the second
ingredient in the proof of the limsup inequality, Theorem 
\ref{limsup_proof} below.

\begin{lemma}[Limsup inequality by approximation]
\label{limsup_approx}
Let $(X,d)$ be a metric space and $\mathcal{F}_n, \; \mathcal{F} : 
X \to [-\infty,\infty]$. If a sequence $\left(x^m\right)_{m \in \N}
\subset X$ satisfies
\begin{enumerate}
\item  $d\left(x,x^m\right) \to 0$ as $m \to \infty$ for an element $x \in X$,\label{konvergenz_folge}
\item $\limsup_{m \to \infty} \mathcal{F}\left(x^m\right)\leq \mathcal{F}(x)$,\label{stetigkeit_funktional}
\item for every $m \in \N$ there exists a sequence $\left(x^m_n\right)_{n \in \N}$ with $d\left(x^m,x^m_n\right) \to 0$ as $n \to \infty$ and $\limsup_{n \to \infty} \mathcal{F}_n\left(x^m_n\right)\leq \mathcal{F}\left(x^m\right)$, \label{limsup_bed}
\end{enumerate}
then there exists a sequence $\left(y_n\right)_{n \in \N}\subset X$ 
with 
\begin{align*}
d\left(x,y_n\right) \to 0 \text{ as } n \to \infty \quad \text{and} \quad \limsup_{n \to \infty} \mathcal{F}_n\left(y_n\right) \leq \mathcal{F}(x).
\end{align*}
\end{lemma}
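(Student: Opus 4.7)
The plan is to run a standard diagonal (Attouch-type) argument: for each fixed $m$ I can approximate $x^m$ by $x^m_n$ in both the metric $d$ and the functional values $\mathcal{F}_n$, and by (i)--(ii) the $x^m$ already approximate $x$ and $\mathcal{F}(x)$. So I want to choose, for each $n$, an index $m=m(n)$ growing slowly enough that $x^{m(n)}_n$ still approximates $x^{m(n)}$, and then use transitivity.

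Concretely, I would proceed as follows. For every $m\in\N$, hypothesis (iii) gives $N_m\in\N$ such that for all $n\ge N_m$
\begin{align*}
d(x^m,x^m_n)<\tfrac{1}{m}\quad\text{and}\quad \mathcal{F}_n(x^m_n)\le\mathcal{F}(x^m)+\tfrac{1}{m},
\end{align*}
where in case $\mathcal{F}(x^m)=-\infty$ the second condition is replaced by $\mathcal{F}_n(x^m_n)\le -m$, which is still achievable since then $\limsup_n \mathcal{F}_n(x^m_n)=-\infty$. Without loss of generality I can take $(N_m)_{m\in\N}$ strictly increasing with $N_1\ge 1$. Define $m(n):=\max\{m\in\N:N_m\le n\}$ for $n\ge N_1$ (and $m(n):=1$ otherwise); then $m(n)\to\infty$ as $n\to\infty$. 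Set $y_n:=x^{m(n)}_n$ for $n\ge N_1$, and $y_n:=x$ for $n<N_1$.

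For convergence I use the triangle inequality
\begin{align*}
d(x,y_n)\le d(x,x^{m(n)})+d(x^{m(n)},x^{m(n)}_n)\le d(x,x^{m(n)})+\tfrac{1}{m(n)},
\end{align*}
and both terms tend to $0$ as $n\to\infty$ by (i) and $m(n)\to\infty$. For the functional I use $n\ge N_{m(n)}$ to get
\begin{align*}
\mathcal{F}_n(y_n)=\mathcal{F}_n(x^{m(n)}_n)\le\mathcal{F}(x^{m(n)})+\tfrac{1}{m(n)},
\end{align*}
so that
\begin{align*}
\limsup_{n\to\infty}\mathcal{F}_n(y_n)\le\limsup_{n\to\infty}\mathcal{F}(x^{m(n)})\le\limsup_{m\to\infty}\mathcal{F}(x^m)\le\mathcal{F}(x),
\end{align*}
where the last two inequalities use that $m(n)\to\infty$ and hypothesis (ii).

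There is essentially no deep obstacle here; the only point that requires a moment of care is the case $\mathcal{F}(x^m)=-\infty$, which is handled by the alternative bound $\mathcal{F}_n(x^m_n)\le-m$ noted above so that the chain of estimates does not collapse.
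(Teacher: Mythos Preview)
Your proof is correct and follows the same diagonal-selection idea as the paper's, but with a cleaner bookkeeping scheme. The paper instead defines, for each $n$, the set
\[
J_n=\big\{m\in\N: x^m_n\in U_m \text{ and } \mathcal{F}_n(x^m_n)\le \mathcal{F}(x^m)+\tfrac{1}{m}\big\}
\]
with $U_m=\{y:d(x,y)<d(x,x^m)+\tfrac1m\}$, and then sets $m_n$ to be the maximum of $J_n$ (or an element $\ge n$ if $J_n$ is unbounded, or $1$ if $J_n=\emptyset$), followed by a case analysis to show $m_n\to\infty$. Your approach inverts the quantifiers: you fix $m$, extract a threshold $N_m$ from (iii), force $(N_m)$ to be strictly increasing, and set $m(n)=\max\{m:N_m\le n\}$. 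This avoids the three-case split entirely and makes the argument shorter; it is the more textbook form of the Attouch diagonal lemma. A minor bonus is that you explicitly flag and handle the case $\mathcal{F}(x^m)=-\infty$, which the paper's proof leaves implicit.
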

We learnt this result from \cite[Lemma 1.0.4]{limberg_2019}; for the 
convenience of the reader we present its proof in Appendix \ref{app:C}.

The proof of the limsup inequality is inspired by Blatt's
improvement of Scholtes' $\Gamma$-convergence result for 
the M\"obius energy \cite[Theorem 4.8]{blatt_2019b}.

\begin{theorem}[Limsup inequality]
\label{limsup_proof}
For  
every $\gamma \in C^1_\textnormal{ia}
\left(\Sl,\R^3\right) $
there exists a sequence $\left(b_n \right)_{n \in \N} \subset C^1_\textnormal{ia}
\left(\Sl,\R^3\right)$
such that
\begin{align*}
b_n \stackrel{C^1}{\underset{n \to \infty}{\longrightarrow}} \gamma \quad \text{and} \quad \limsup_{n \to \infty} \E_q^{n} \left(b_n \right) \leq \TP_q(\gamma).
\end{align*}
\begin{proof}
If $\TP_q(\gamma)=\infty$, choose $b_n=\gamma$ for all $n \in \N$. Then $b_n\to \gamma$ in $C^1$ and the limsup inequality
follows trivially. From now on let $\TP_q(\gamma)<\infty$. 
Thus, we have \linebreak $\gamma \in 
W^{2-\frac{1}{q},q}\left(\Sl,\R^3\right)$ by 
\cite[Theorem 1.1]{blatt_2013b}. Moreover, Lemma
\ref{lem:bilipschitz} yields a $c_\gamma >0$ such that
\begin{align}
\label{bilipschitz}
c_\g|\gamma(s)-\gamma(t)|\geq  |s-t|_{\Sl}\quad\Foa t,s\in\R.
\end{align}
We now consider a sequence of suitably rescaled and
reparametrized convolutions of $\gamma$ and prove the limsup 
inequality for these convolutions. Applying Lemma \ref{limsup_approx} then yields the limsup inequality for $\gamma$.

\textbf{Step 1:} For $n \in \N$ define $\sn{i}:=\frac{iL}{n}$ for 
$i=0, \dots, n$. Then for all $i=0, \dots,n-1$ we have 
$\diffsn{i}=h_n=\tilde{h}_n=\frac{L}{n}$,
so that the $\mathcal{M}_n:=\{\sn{0},\ldots,\sn{n}\}$ form a
$(c_1-c_2)$-distributed sequence of partitions with $c_1=c_2=L$
for $n\ge 2$; see Definition \ref{def_part}. 

\textbf{Step 2:} For $k \in \N$ we set $\varepsilon_k:=\frac{1}{k}$. Let $\gamma_{\varepsilon_k}$ be the convolution as in \eqref{eq:convolution}
 and $\mathscr{L}\left(\gamma_{\varepsilon_k}\right)$ the length of 
 $\gamma_{\varepsilon_k}$. We then define $\tilde{\gamma}_k$ as the 
 arclength parametrization of the rescalings
 $L\gamma_{\varepsilon_k}/\mathscr{L}\left(\gamma_{\varepsilon_k}\right)$.
 Thus, $\tilde{\gamma}_k$ has the same length as $\gamma$ for every 
 $k \in \N$. Furthermore, $\tilde{\gamma}_k$ is on $[0,L)$ injective for 
 $k$ sufficiently large, which follows from the bilipschitz
 property \eqref{bilipschitz} of $\g$ together with the $C^1$-convergence
 of the convolutions $\g_{\varepsilon_k}\to \g$ as $k\to\infty$.
 By omitting finitely many indices we may assume that $\tilde{\g}_k
 \in C^\infty_\textnormal{ia}(\Sl,\R^3)$ for \emph{all} $k\in\N$.
For every $k\in\N$ there is by 
Lemma \ref{existenz_biarc_kurve_folge} 
some index $N_0(k)\in\N$ such 
that there exist proper 
$\tilde{\gamma}_k$-interpolating balanced biarc curves 
$\tilde{\beta}_n^k$ parametrized by arclength that interpolate the point-tangent pairs
$
\left(\left(\left[\tilde{\gamma}_k\left(\sn{i}\right),\tilde{\gamma}_k'\left(\sn{i}\right)\right],\left[\tilde{\gamma}_k\left(\sn{i+1}\right),\tilde{\gamma}_k'\left(\sn{i+1}\right)\right]\right)\right)_{i=0, \dots, n-1}$
for all $
n\ge N_0(k),
$
such that the matching points $m_{n,i}^k \in \Sigma_{++}^{n,i,k}$ satisfy
(see Definitions \ref{biarc_Kurve} and \ref{sigma++} (iv))
$
\left|\tilde{\gamma}_k\left(\sn{i}\right)-m_{n,i}^k\right|=\left|\tilde{\gamma}_k\left(\sn{i+1}\right)-m_{n,i}^k\right|
$ for all $ n\ge N_0(k),$ $
i=0,\ldots,n-1.$
Let $L_n^k:=\mathscr{L}(\tilde{\beta}_n^k)$ be the length of 
$\tilde{\beta}_n^k$, and notice that Theorem \ref{Laengenkonvergenz}
implies
\begin{equation}\label{eq:length-converges}
L_n^k\to L=\mathscr{L}(\tilde{\g}_k)\quad\textnormal{
for each $k\in\N$ as
$n\to\infty$.}
\end{equation}

\textbf{Step 3:} For $k \in \N$, let 
$\varphi_n^k$ be Smutny's  reparametrization \cite[Appendix A]{smutny_2004}
and  define $\tilde{B}_n^k:=\tilde{\beta}_n^k\circ\varphi_n^k,$ so that
Theorem \ref{C1_Konvergenz} implies
\begin{align}
\label{C1_B_n}
\|\tilde{\gamma}_k-\tilde{B}_n^k\|_{C^1} \to 0 \quad \textnormal{
for each $k\in\N$ as $ n \to \infty$.}
\end{align}
Now, define $B_n^k(s):=L \left(L_n^k\right)^{-1} 
\tilde{B}_n^k(s)$
for all $s \in \R$. Then $B_n^k$ has obviously length $L$.
However, $B_n^k$ is not parametrized by arclength. Nevertheless,
by means of \eqref{eq:length-converges} and \eqref{C1_B_n}
we find
$
\|B^k_n-\tilde{B}^k_n\|_{C^1}=\textstyle 
\big|\frac{L}{L^k_n}-1\big|\|\tilde{B}^k_n\|_{C^1}\to 0$ 
for each $k\in\N$ as $n\to\infty$. Consequently, by \eqref{C1_B_n},
one has $\|\tilde{\g}_k-B^k_n\|_{C^1}\to 0$ for each $k\in\N$ as 
$n\to\infty$, and therefore by means of Lemma \ref{c1_norm_bogenlaenge},
\begin{equation}\label{eq:beta-convergence}
\|\tilde{\gamma}_k - 
\beta_n^k\|_{C^1} \longrightarrow  0\quad\textnormal{for each $k\in\N$
as $n\to\infty$,}
\end{equation}
where $\beta_n^k$ is the reparametrization of $B_n^k$
by arclength.

\textbf{Step 4:} 
We now show that $\beta_n^k \in \mathcal{B}_n$ holds if $n$ is 
sufficiently large, such that the values $\E_q^n(\beta_n^k)$
 are finite by definition \eqref{eq:discrete-tan-point}
 in the introduction.
 We need to show that the length  $\lambda_{n,i}^k$ of the 
 $i$-th biarc of $\beta_n^k$ satisfies
 \eqref{eq:biarc-lengths}.
For that we apply Lemma \ref{Konv_max_bruch_lamda}
to the length
 $\tilde{\lambda}_{n,i}^k$ of the $i$-th biarc of 
 $\tilde{\beta}_n^k$. More precisely, we take the limit $n\to\infty$
 in the following inequality which holds 
 for each $k\in\N$, $n\ge N_0(k)$, $i=0,\ldots,n-1$,
 \begin{align*}
  -\max_{j=0, \dots, n-1} \left|
 {\textstyle \frac{\tilde{\lambda}_{n,j}^k}{({L}/{n})} -1} \right| 
 + 1\le {\textstyle\frac{\tilde{\lambda}_{n,i}^k}{({L}/{n})}} &\leq 
 \max_{j=0, \dots, n-1} \left|{\textstyle \frac{\tilde{\lambda}_{n,j}^k}{({L}/{n})}} -1 
 \right| + 1,
 \end{align*}
 to obtain 
 \begin{align}
1\longleftarrow \min_{i=0, \dots, n-1} {\textstyle
\frac{\tilde{\lambda}_{n,i}^k}{({L}/{n})}}\le \max_{i=0, \dots, n-1}
\textstyle
\frac{\tilde{\lambda}_{n,i}^k}{({L}/{n})} \longrightarrow 1
\quad\textnormal{as $n\to\infty$.}\label{new}
 \end{align}
Since the image of $\beta_n^k$ is just
the image of $\tilde{\beta}_n^k$ scaled by the factor 
$L \left(L_n^k\right)^{-1}$ we deduce $\lambda_{n,j}^k=L \left(L_n^k\right)^{-1} \tilde{\lambda}_{n,j}^k$. Combining this with \eqref{eq:length-converges}
we find for each $k\in\N$ and index $N_1(k)\ge N_0(k)$ such that
\begin{align}
\label{richtiger_Raum}
{\textstyle\frac{L}{2n}} \leq \min_{j=0, \dots,n-1}\lambda_{n,j}^k \leq \max_{j=0, \dots,n-1}\lambda_{n,j}^k \leq {\textstyle \frac{2L}{n}}\quad\Foa n\ge N_1(k),
\end{align}
which is \eqref{eq:biarc-lengths} for $\lambda_i:=\lambda_i^k$. 
Thus $\beta_n^k \in \mathcal{B}_n$ for all $n\ge N_1(k)$.

\textbf{Step 5:} 
The scaling property \eqref{scalings}
and the parameter invariance of the discrete  
 tangent-point energies 
 yields 
\begin{align*}
\E_q^n(\beta_n^k)
=\big(L (L_n^k)^{-1}\big)^{2-q} \tilde{\E}_q^n
(\tilde{\beta}_n^k ) \quad\textnormal{for all $k\in\N$ and $n\ge N_1(k)$,}
\end{align*}
so that we obtain by \eqref{eq:length-converges} and 
Theorem \ref{konv_ordnung_energien} applied to
$\g:=\tilde{\g}_k$ and $\beta_n:=\tilde{\beta}^k_n$
\begin{align}
|\TP_q(\tilde{\gamma}_k)-\E_q^n(\beta_n^k) |
&\leq |\TP_q(\tilde{\gamma}_k) - \tilde{\E}_q^n (\tilde{\beta}_n^k)|
+ |\tilde{\E}_q^n(\tilde{\beta}_n^k)|
\big|1-(L (L_n^k)^{-1} )^{2-q} \big|\longrightarrow 0\label{eq:energy-convergence}
\end{align}
for each $k\in\N$ as $n\to\infty$.

\textbf{Step 6:}
In this final step we check the assumptions of Lemma \ref{limsup_approx}.
The space $C_\textnormal{ia}^1\left(\Sl, \R^3 \right)$
 is a metric space with the metric induced by the $C^1$-norm.
 By the Morrey-Sobolev embedding 
 (see \cite[Theorem A.2]{knappmann-etal_2022} in the setting of periodic functions) there exists a constant $c_E >0$, such that
\begin{align*}
\| \tilde{\gamma}_k - \gamma\|_{C^1}\leq c_E\|\tilde{\gamma}_k - \gamma \|_{W^{2-\frac{1}{q},q}}.
\end{align*}
According to Theorem \ref{konv_faltung} applied to $\rho=q$ 
and $s=1-\tfrac{1}{q}$ for $q>2$ the right-hand side 
converges to $0$ as $k \to \infty$. Thus, $\tilde{\gamma}_k$ converges 
in the $C^1$-norm to $\gamma$, which verifies condition (i) in 
Lemma \ref{limsup_approx}. 
Furthermore, \cite[(4.2) Satz]{wings_2018} implies that $\TP_q$ is 
continuous on
$W_\textnormal{ia}^{2-\frac{1}{q},q}$ since 
$q > 2$. Thus, we obtain $\lim_{k \to \infty} \TP_q\left(\tilde{\gamma}_k\right) = \TP_q(\gamma)$,
which gives us condition (ii) of Lemma \ref{limsup_approx}. 
Combining \eqref{eq:beta-convergence} with \eqref{eq:energy-convergence}
verifies condition (iii) of Lemma \ref{limsup_approx}.
Hence,  Lemma \ref{limsup_approx}  yields  the limsup inequality 
for  $\gamma$. 
\end{proof}
\end{theorem}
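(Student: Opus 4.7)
The plan is to reduce to the trivial case and then, when $\TP_q(\gamma)<\infty$, perform a double approximation and glue the two layers with the abstract Lemma \ref{limsup_approx}. If $\TP_q(\gamma)=\infty$, taking $b_n\equiv\gamma$ works. Otherwise, Blatt's characterization of the energy space places $\gamma$ in $W^{2-1/q,q}_{\textnormal{ia}}(\Sl,\R^3)$, and Lemma \ref{lem:bilipschitz} gives a bilipschitz constant $c_\gamma$ which allows me to transfer injectivity to $C^1$-close approximants.

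The outer layer is smoothing. Fix a standard mollifier $\eta_{\varepsilon}$ and let $\tilde\gamma_k$ be the arclength, length-$L$ reparametrization of the rescaled convolution $L\gamma_{1/k}/\mathscr{L}(\gamma_{1/k})$ produced by Theorem \ref{konv_faltung} with $s=1-\tfrac1q$ and $\rho=q$. Then $\tilde\gamma_k\to\gamma$ in $W^{2-1/q,q}$, hence in $C^1$ via the Morrey embedding; injectivity of $\tilde\gamma_k$ for $k$ large follows from the bilipschitz property of $\gamma$ combined with $C^1$-convergence. The continuity of $\TP_q$ on $W^{2-1/q,q}$ for $q>2$ (Wings) gives $\TP_q(\tilde\gamma_k)\to\TP_q(\gamma)$. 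These verify hypotheses (i) and (ii) of Lemma \ref{limsup_approx} with $x=\gamma$, $x^m=\tilde\gamma_m$, $\mathcal F=\TP_q$, $\mathcal F_n=\E_q^n$.

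The inner layer is the biarc construction applied to the smooth $\tilde\gamma_k$. For the uniform partition $s_{n,i}=iL/n$ (which is $(L-L)$-distributed), Lemma \ref{existenz_biarc_kurve_folge} supplies proper balanced $\tilde\gamma_k$-interpolating biarc curves $\tilde\beta_n^k$ for $n$ large. Using Smutny's reparametrization, rescaling to length $L$, and then reparametrizing by arclength, I obtain $\beta_n^k$; Theorems \ref{Laengenkonvergenz} and \ref{C1_Konvergenz} together with a comparison of $C^1$-norms under arclength reparametrization yield $\beta_n^k\to\tilde\gamma_k$ in $C^1$ as $n\to\infty$. Lemma \ref{Konv_max_bruch_lamda} applied to $\tilde\beta_n^k$ forces the rescaled biarc lengths $\lambda_{n,i}^k$ to lie in $[L/(2n),2L/n]$ for $n$ large, so $\beta_n^k\in\mathcal B_n$ and $\E_q^n(\beta_n^k)<\infty$. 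Invoking the quantitative rate of Theorem \ref{konv_ordnung_energien} on $\tilde\gamma_k$ gives $\tilde\E_q^n(\tilde\beta_n^k)\to\TP_q(\tilde\gamma_k)$, and the scaling identity \eqref{scalings} together with $\mathscr{L}(\tilde\beta_n^k)\to L$ transfers this to $\E_q^n(\beta_n^k)\to\TP_q(\tilde\gamma_k)$. This is hypothesis (iii) of Lemma \ref{limsup_approx}, which then outputs the desired diagonal sequence $b_n$.

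The main technical obstacle I anticipate is the normalization bookkeeping: the mollified curve is neither of length $L$ nor arclength-parametrized, the interpolating biarc curves $\tilde\beta_n^k$ inherit a different length again, and the rescaled $B_n^k$ must be reparametrized once more. Every change of parameter and every rescaling must be compatible with keeping the resulting $\beta_n^k$ close to $\tilde\gamma_k$ in $C^1$ and, crucially, inside the narrow window \eqref{eq:biarc-lengths} that defines $\mathcal B_n$. A secondary subtle point is that the explicit rate in Theorem \ref{konv_ordnung_energien} is stated for $\tilde\E_q^n$ on $\tilde{\mathcal B}_n$, and one must verify that rescaling by $L/\mathscr{L}(\tilde\beta_n^k)\to 1$ does not destroy the limit, using only the scaling law \eqref{scalings} and boundedness of $\tilde\E_q^n(\tilde\beta_n^k)$.
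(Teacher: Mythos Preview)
Your proposal is correct and follows essentially the same route as the paper: the trivial case $\TP_q(\gamma)=\infty$; then for finite energy the two-layer approximation (mollify and rescale to $\tilde\gamma_k$ via Theorem~\ref{konv_faltung}, then biarc-interpolate on the uniform mesh via Lemma~\ref{existenz_biarc_kurve_folge} and normalize to $\beta_n^k\in\mathcal B_n$), with energy convergence supplied by Theorem~\ref{konv_ordnung_energien} plus the scaling law \eqref{scalings}, and the diagonal extraction via Lemma~\ref{limsup_approx}. The technical obstacles you flag (the chain of reparametrizations/rescalings needed to land in $\mathcal B_n$ while preserving $C^1$-closeness, and the passage from $\tilde\E_q^n$ to $\E_q^n$ using $L_n^k\to L$) are exactly the points the paper works through in its Steps~3--5.
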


\begin{remark}\label{rem:actually-biarcs}
For the proof of Theorem \ref{gamma_conv_rope} 
in Section \ref{sec:ropelength} it is important to note that
the actual recovery sequence for the limsup inequality in
the previous proof is
a subsequence of the (doubly subscripted)
arclength parametrized biarc curves
$\beta^k_n\in C^{1,1}_\textnormal{ia}(\Sl,\R^3)$ for $k\in\N$
and $n\ge N_1(k)$; see the choice of the abstract
recovery sequence $(y_n)_n$ towards the end of the
proof of Lemma \ref{limsup_approx} 
in Appendix
\ref{app:C}.
\end{remark}

{\it Proof of Theorem \ref{gamma_convergence}.}\,
According to \cite[Definition 1.5]{braides_2002} it suffices to
verify two fundamental inequalities. Indeed, the liminf inequality
is the content of Theorem \ref{thm:liminfineq}, 
whereas the limsup inequality
is established in Theorem \ref{limsup_proof}.\hfill $\Box$

{\bf Convergence of discrete almost minimizers.}\,
In this subsection, we prove the convergence of discrete almost 
minimizers of the discrete tangent-point energies in the metric space 
defined before. The following lemma of Scholtes 
\cite[Lemma A.1]{scholtes_2014b}
states the general result. For completeness we present its short
proof in Appendix \ref{app:C}.

\begin{lemma}[Convergence of minimizers]\label{konv_minimierer}
Let $(X,d)$ be a metric space and $\mathcal{F}_n, \mathcal{F} : 
X \to [-\infty,\infty]$, $Y\subset X$. Assume that $x_n \to x$ implies 
$\mathcal{F}(x)\leq \liminf_{n \to \infty} \mathcal{F}_n(x_n)$ and that 
for every $y \in Y$ there exists a sequence 
$\left(y_n\right)_{n \in \N}\subset X$ with $\limsup_{n \to \infty} 
\mathcal{F}_n\left(y_n\right)\leq \mathcal{F}(y)$.
Let $\left| \mathcal{F}_n(z_n) - \inf_X \mathcal{F}_n\right| \to 0$ and $z_n \to z \in X$ as $n$ tends to infinity. Then
$
\mathcal{F}(z) \leq \liminf_{n \to \infty} \inf_X \mathcal{F}_n \leq \inf_Y \mathcal{F}.
$
If, in addition, $z \in Y$, then
$
\mathcal{F}(z) = \min_Y \mathcal{F}$ and
$\lim_{n \to \infty} \mathcal{F}_n(z_n)=\mathcal{F}(z).
$
\end{lemma}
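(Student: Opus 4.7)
The plan is to chain together the liminf hypothesis, the limsup hypothesis on $Y$, and the almost-minimizing property of $(z_n)$ to trap $\mathcal{F}(z)$ between $\liminf_n \inf_X \mathcal{F}_n$ and $\inf_Y \mathcal{F}$ from the correct sides. First I would apply the liminf hypothesis directly to the converging sequence $z_n \to z$, which yields $\mathcal{F}(z) \leq \liminf_n \mathcal{F}_n(z_n)$. Since $|\mathcal{F}_n(z_n) - \inf_X \mathcal{F}_n| \to 0$, the right-hand side equals $\liminf_n \inf_X \mathcal{F}_n$, establishing the first of the two inequalities in the main claim.

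Next I would fix an arbitrary $y \in Y$ and invoke the limsup hypothesis to obtain a sequence $(y_n)_n \subset X$ with $\limsup_n \mathcal{F}_n(y_n) \leq \mathcal{F}(y)$. The elementary bound $\inf_X \mathcal{F}_n \leq \mathcal{F}_n(y_n)$ then gives $\liminf_n \inf_X \mathcal{F}_n \leq \limsup_n \mathcal{F}_n(y_n) \leq \mathcal{F}(y)$, and taking the infimum over $y \in Y$ produces $\liminf_n \inf_X \mathcal{F}_n \leq \inf_Y \mathcal{F}$. Concatenating with the previous step yields the full inequality chain $\mathcal{F}(z) \leq \liminf_n \inf_X \mathcal{F}_n \leq \inf_Y \mathcal{F}$.

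For the additional statement under the assumption $z \in Y$, the inequality $\inf_Y \mathcal{F} \leq \mathcal{F}(z)$ is automatic, so combined with the previous chain it forces equality throughout, in particular $\mathcal{F}(z) = \min_Y \mathcal{F}$. To upgrade the $\liminf$ to a genuine limit for $\mathcal{F}_n(z_n)$, I would apply the limsup hypothesis once more, this time to $y = z \in Y$, producing a sequence $(\tilde z_n)$ with $\limsup_n \mathcal{F}_n(\tilde z_n) \leq \mathcal{F}(z)$. Using $\inf_X \mathcal{F}_n \leq \mathcal{F}_n(\tilde z_n)$ and then adding back $\mathcal{F}_n(z_n) - \inf_X \mathcal{F}_n \to 0$, one obtains $\limsup_n \mathcal{F}_n(z_n) \leq \mathcal{F}(z)$. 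Together with $\mathcal{F}(z) \leq \liminf_n \mathcal{F}_n(z_n)$ from the first paragraph, this sandwiches $\mathcal{F}_n(z_n)$ and delivers $\lim_n \mathcal{F}_n(z_n) = \mathcal{F}(z)$.

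There is no real obstacle here: the argument is essentially a bookkeeping exercise in chaining inequalities, and everything reduces to remembering that the almost-minimizing condition lets one freely swap $\mathcal{F}_n(z_n)$ with $\inf_X \mathcal{F}_n$ inside a $\liminf$ or $\limsup$. The only point requiring a little care is that in the second paragraph one must use an \emph{arbitrary} $y \in Y$ (not $z$) so that the conclusion $\liminf_n \inf_X \mathcal{F}_n \leq \inf_Y \mathcal{F}$ holds without presupposing $z \in Y$, and only in the final paragraph does one specialize to $y = z$.
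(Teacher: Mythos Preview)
Your proof is correct and follows essentially the same approach as the paper's: chain the liminf hypothesis on $z_n\to z$, the almost-minimizing condition, and the recovery-sequence bound $\inf_X\mathcal{F}_n\le\mathcal{F}_n(y_n)$ for arbitrary $y\in Y$, then take the infimum over $Y$. The paper compresses all of this into a single displayed chain and dismisses the final two equalities with ``from which the last two equations follow as claimed if $z\in Y$'', whereas you spell out the $\limsup$ half of the sandwich explicitly---but the underlying argument is identical.
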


{\it Proof of Corollary \ref{cor:convergence-mini-tp}.}\,
The proof follows immediately from Lemma \ref{konv_minimierer} with 
the metric space
$
Y=X=C_\textnormal{ia}^1\left(\Sl, \R^3 \right)\cap\mathcal{K},
$
with metric induced by the $C^1$-norm. Notice that the knot class
$\mathcal{K}$
is stable under $C^1$-convergence; see, e.g., \cite{reiter_2005}.
Since $\TP_q(\gamma)<\infty$ holds, we obtain $\gamma \in W^{2-\frac{1}{q},q}\left(\Sl,\R^3\right)$ by \cite[Theorem 1.1]{blatt_2013b}.
\hfill $\Box $

\section{$\Gamma$-convergence to the Ropelength functional}
\label{sec:ropelength}
As a first step towards the proof of Theorem \ref{gamma_conv_rope}
we show that the continuous tangent-point energies 
$(\TP_k)^{\frac{1}{k}}$ $\Gamma$-converge  to the ropelength 
$\mathcal{R}$  on 
$C_\textnormal{ia}^{1,1}(\R/\Z,\R^3)$ equipped with
the $C^1$-norm
as $k\to\infty$. We follow the proof of 
\cite[Theorem 6.11]{gilsbach_2018}, where Gilsbach showed 
$\Gamma$-convergence of Integral Menger curvatures towards ropelength.

\begin{lemma}
\label{gamma_conv_cont_tpe_ropelength}
For any $\g\in C^{1,1}_\textnormal{ia}(\R/\Z,\R^3)$ one
has $(\TP_k)^{\frac1k}(\g)\to\rope(\g)$ as $k\to\infty$.
Moreover, 
$(\TP_k)^{\frac{1}{k}}  \stackrel{\Gamma}{\underset{k \to \infty}{\longrightarrow}} \rope$ on $(C_\textnormal{ia}^{1,1}(\Sc,\R^3),\|\cdot\|_{C^1} )$.
\end{lemma}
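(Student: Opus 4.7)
The strategy is to first establish the pointwise convergence $(\TP_k)^{1/k}(\gamma)\to\rope(\gamma)$ for every $\gamma\in C^{1,1}_\textnormal{ia}(\Sc,\R^3)$, and then deduce the $\Gamma$-convergence statement as a consequence. The key input is the well-known identity
\[
\triangle[\gamma]\,=\,\inf_{s\neq t}r_\textnormal{tp}(\gamma(s),\gamma(t))
\]
valid for $C^{1,1}$-curves (going back to Gonzalez--Maddocks, cf.\ the regularity discussion around \cite{gonzalez-etal_2002,cantarella-etal_2002}); together with arclength parametrization on $\R/\Z$ this gives $\rope(\gamma)=1/\triangle[\gamma]$, since $\mathscr{L}(\gamma)=1$.

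For the pointwise upper bound, the trivial estimate $r_\textnormal{tp}\geq\triangle[\gamma]$ integrated over $(\Sc)^2$ yields $\TP_k(\gamma)\leq\triangle[\gamma]^{-k}$, hence $(\TP_k)^{1/k}(\gamma)\leq\rope(\gamma)$. For the matching lower bound, given $\eta>0$ pick $(s_0,t_0)$, $s_0\neq t_0$, with $r_\textnormal{tp}(\gamma(s_0),\gamma(t_0))\leq\triangle[\gamma]+\eta$. Using continuity of $r_\textnormal{tp}$ off the diagonal (exploiting injectivity of $\gamma$, so $|\gamma(s_0)-\gamma(t_0)|>0$, and continuity of $\gamma'$), choose a neighborhood $U\times V\subset(\Sc\times\Sc)\setminus\{s=t\}$ of $(s_0,t_0)$ of positive Lebesgue measure $\mu_0$ on which $r_\textnormal{tp}(\gamma(\cdot),\gamma(\cdot))\leq\triangle[\gamma]+2\eta$. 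Then
\[
\TP_k(\gamma)\,\geq\,\int_U\int_V r_\textnormal{tp}(\gamma(s),\gamma(t))^{-k}\,ds\,dt\,\geq\,\frac{\mu_0}{(\triangle[\gamma]+2\eta)^k},
\]
so $(\TP_k)^{1/k}(\gamma)\geq\mu_0^{1/k}/(\triangle[\gamma]+2\eta)$. Taking $k\to\infty$ and then $\eta\to 0$ yields $\liminf_k(\TP_k)^{1/k}(\gamma)\geq 1/\triangle[\gamma]=\rope(\gamma)$, completing the pointwise convergence.

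For the $\Gamma$-limsup inequality, the constant recovery sequence $\gamma_k\equiv\gamma$ suffices: it converges to $\gamma$ in $C^1$ trivially, and the pointwise convergence gives $\limsup_k(\TP_k)^{1/k}(\gamma_k)=\rope(\gamma)$. For the $\Gamma$-liminf inequality, assume $\gamma_k\to\gamma$ in $C^1$. Rerun the pointwise lower bound, choosing $(s_0,t_0)$ and the neighborhood $U\times V$ based on $\gamma$ as above. Since $\overline{U\times V}$ is compact and bounded away from the diagonal, $C^1$-convergence $\gamma_k\to\gamma$ yields uniform convergence $r_\textnormal{tp}(\gamma_k(\cdot),\gamma_k(\cdot))\to r_\textnormal{tp}(\gamma(\cdot),\gamma(\cdot))$ on $U\times V$. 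Hence for $k$ large enough $r_\textnormal{tp}(\gamma_k(s),\gamma_k(t))\leq\triangle[\gamma]+3\eta$ throughout $U\times V$, so
\[
\TP_k(\gamma_k)\,\geq\,\frac{\mu_0}{(\triangle[\gamma]+3\eta)^k},
\]
and the same limit-passing argument delivers $\liminf_k(\TP_k(\gamma_k))^{1/k}\geq 1/\triangle[\gamma]=\rope(\gamma)$.

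The main technical obstacle is the identity $\triangle[\gamma]=\inf r_\textnormal{tp}$ in the required $C^{1,1}$ generality; while classical for smoother curves, it requires care in the $C^{1,1}$ setting and is best imported from the literature. A secondary issue is the degenerate case $\triangle[\gamma]=0$, in which $\rope(\gamma)=\infty$: one then exhibits, for any $M>0$, a pair $(s_0,t_0)$ with $r_\textnormal{tp}(\gamma(s_0),\gamma(t_0))\leq 1/M$ and repeats the neighborhood argument to show that both $\liminf_k(\TP_k)^{1/k}(\gamma)$ and $\liminf_k(\TP_k(\gamma_k))^{1/k}$ are bounded below by $M$, whence both equal $+\infty$ and the inequalities hold trivially.
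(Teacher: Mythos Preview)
Your proof is correct, but the paper takes a more abstract route. Both proofs rely on the identity $\sup_{s\neq t} r_{\textnormal{tp}}^{-1}(\gamma(s),\gamma(t)) = 1/\triangle[\gamma] = \rope(\gamma)$ for injective arclength $C^{1,1}$-curves (the paper cites \cite[Lemma 2]{schuricht-vdm_2003a}). For pointwise convergence, the paper simply observes that $(\TP_k)^{1/k}(\gamma) = \|r_{\textnormal{tp}}^{-1}(\gamma(\cdot),\gamma(\cdot))\|_{L^k((\R/\Z)^2)}$ and invokes the standard fact that $\|f\|_{L^k} \nearrow \|f\|_{L^\infty}$ on a probability space; you instead establish the upper and lower bounds by hand via your neighborhood argument. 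For $\Gamma$-convergence, the paper exploits two structural facts---the sequence $k \mapsto (\TP_k)^{1/k}$ is non-decreasing, and each $\TP_k$ is $C^1$-lower semicontinuous---and then applies \cite[Remark 1.40(ii)]{braides_2002}, which says that for a non-decreasing sequence of lower semicontinuous functionals the pointwise limit coincides with the $\Gamma$-limit. Your approach bypasses both monotonicity and lower semicontinuity by a direct stability argument, transferring the neighborhood lower bound from $\gamma$ to nearby $\gamma_k$ via uniform convergence of $r_{\textnormal{tp}}^{-1}(\gamma_k(\cdot),\gamma_k(\cdot))$ on compact sets away from the diagonal. Your argument is more self-contained; the paper's is shorter once the cited machinery is in hand. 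One minor remark: your handling of the degenerate case $\triangle[\gamma]=0$ is unnecessary here, since injective $C^{1,1}$-curves always have positive thickness (the paper notes $\rope(\gamma)<\infty$ via \cite[Theorem 1(iii)]{schuricht-vdm_2003a}).
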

\begin{proof}
According to \cite[Theorem 1 (iii)]{schuricht-vdm_2003a}
one has\footnote{Be aware of the notation: In 
\cite{schuricht-vdm_2003a} the expression
$\mathcal{R}[\cdot]$ was used for 
thickness $\triangle[\cdot]$, whereas $\mathcal{K}[\cdot]$ in 
\cite{schuricht-vdm_2003a} corresponds to $\triangle[\cdot]^{-1}$.
}
$\rope(\gamma)<\infty$ for 
$\gamma \in C_\textnormal{ia}^{1,1}(\R/\Z,\R^3)$.
In addition, by \cite[Lemma 2]{schuricht-vdm_2003a}
\begin{align}
\|r^{-1}_\textnormal{tp}(\g(\cdot),\g(\cdot))\|_{L^\infty(\R/\Z\times
\R/\Z)}=
\sup_{s,t \in \Sc, s\neq t} \textstyle
r^{-1}_\textnormal{tp}(\g(s),\g(t)) = 
\frac{1}{\triangle[\gamma]}=\rope(\gamma).\label{linfty}
\end{align}
It is well-known (see, e.g., \cite[E3.4]{alt_2016}) that the mapping 
$$
k\mapsto \|r^{-1}_\textnormal{tp}(\g(\cdot),\g(\cdot))\|_{L^k(\R/\Z
\times\R/\Z)}=(\TP_k)^{\frac1k}
$$
is non-decreasing and satisfies  by means of \eqref{linfty}
$$
\lim_{k\to\infty}(\TP_k)^{\frac1k}(\g)
=\|r^{-1}_\textnormal{tp}(\g(\cdot),\g(\cdot))\|_{L^\infty(\R/\Z
\times\R/\Z)}\overset{\eqref{linfty}}{=}
\rope(\gamma).
$$
Furthermore, the continuous tangent-point energy is lower semi-continuous with respect to the $C^1$-norm, see 
\cite[Proof of Corollary 2.3]{strzelecki-etal_2013a} or
\cite[Lemma 1.41]{gilsbach_2018}. 
Then by \cite[Remark 1.40 (ii)]{braides_2002} the pointwise limit of 
$(\TP_k)^{\frac{1}{k}}$ is also the $\Gamma$-limit and we obtain
$
(\TP_k)^{\frac{1}{k}}
\stackrel{\Gamma}{\underset{}{\longrightarrow}} \rope
$ as $k\to\infty$.
\end{proof}

\begin{lemma}
\label{gamma_conv_discrete_cont}
 $(\E_q^n)^{\frac{1}{q}}  
 \stackrel{\Gamma}{\underset{n \to \infty}{\longrightarrow}} 
 (\TP_q)^{\frac{1}{q}}$ 
 on $(C_\textnormal{ia}^{1,1}(\Sc,\R^3),
 \|\cdot\|_{C^1} )$ for all $q > 2$.
\begin{proof}
By Theorem \ref{gamma_convergence} we have 
$\E_q^n \stackrel{\Gamma}{\underset{}{\longrightarrow}}\TP_q
$ on
$(C_\textnormal{ia}^1(\Sc, \R^3), \|\cdot \|_{C^1})$ for
any $q>2$ as $n\to\infty$.
However, in the proof of the limsup inequality in
Theorem \ref{limsup_proof} the recovery sequence is a sequence
consisting only of biarc curves that are in
$C_\textnormal{ia}^{1,1}\left(\Sc, \R^3 \right)$; see
Remark \ref{rem:actually-biarcs}. 
Therefore we also have 
$\E^n_q \stackrel{\Gamma}{\underset{}{\longrightarrow}}\TP_q
$ 
on the space
$(C_\textnormal{ia}^{1,1}(\Sc, \R^3 ), \|\cdot \|_{C^1})$
as $n\to\infty$.
Now apply Lemma \ref{gamma_konv_verkettung} in Appendix
\ref{app:C} for
$\mathcal{F}_n:=\E_q^n$, $\mathcal{F}:=\TP_q$ and
the continuous and non-decreasing
function $g : (0,\infty) \to \R, x \mapsto x^{\frac{1}{q}}$ 
to infer  
$
(\E_q^n)^{\frac{1}{q}} = g \circ \mathcal{F}_{n} 
\stackrel{\Gamma}{\underset{n \to \infty}{\longrightarrow}} 
g\circ \mathcal{F} = (\TP_q)^{\frac{1}{q}}.
$
\end{proof}
\end{lemma}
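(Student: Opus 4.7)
The plan is to reduce the claim to the $\Gamma$-convergence $\E^n_q \stackrel{\Gamma}{\to} \TP_q$ already established on the larger space $C^1_\textnormal{ia}(\Sc,\R^3)$ in Theorem \ref{gamma_convergence}, and then to pass through the continuous monotone function $g:[0,\infty)\to\R$, $g(x):=x^{1/q}$. So the argument breaks into two independent steps: restricting the base space from $C^1_\textnormal{ia}$ to $C^{1,1}_\textnormal{ia}$, and composing with $g$.

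First I would upgrade $\E^n_q \stackrel{\Gamma}{\to} \TP_q$ from $(C^1_\textnormal{ia}(\Sc,\R^3),\|\cdot\|_{C^1})$ to the smaller metric subspace $(C^{1,1}_\textnormal{ia}(\Sc,\R^3),\|\cdot\|_{C^1})$. The liminf inequality survives this restriction trivially, because any approximating sequence $\gamma_n \to \gamma$ in $C^{1,1}_\textnormal{ia}$ is in particular such a sequence in $C^1_\textnormal{ia}$, and $\TP_q(\gamma)\le\liminf \E^n_q(\gamma_n)$ is given by Theorem \ref{thm:liminfineq}. For the limsup inequality, however, one needs a recovery sequence that actually lives in $C^{1,1}_\textnormal{ia}$, not merely in $C^1_\textnormal{ia}$. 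Here I would inspect the construction in the proof of Theorem \ref{limsup_proof} and observe that the recovery sequence is a (diagonal) subsequence of arclength-parametrized biarc curves $\beta^k_n$; every biarc curve is a finite concatenation of circular arcs glued with $C^1$ tangent continuity, hence automatically of class $C^{1,1}$. So the very same recovery sequence serves on the restricted subspace. This is exactly what Remark \ref{rem:actually-biarcs} records.

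Second, I would invoke a general composition principle for $\Gamma$-convergence: if $\mathcal{F}_n \stackrel{\Gamma}{\to} \mathcal{F}$ in a metric space and $g:[0,\infty)\to\R$ is continuous and non-decreasing, then $g\circ\mathcal{F}_n \stackrel{\Gamma}{\to} g\circ\mathcal{F}$. Applied to $\mathcal{F}_n=\E^n_q$, $\mathcal{F}=\TP_q$ (both are non-negative so the composition with $x\mapsto x^{1/q}$ is well-defined) and $g(x)=x^{1/q}$, this yields the desired $(\E^n_q)^{1/q}\stackrel{\Gamma}{\to}(\TP_q)^{1/q}$. The general lemma has a short direct proof: for liminf, if $\gamma_n\to\gamma$ then $\mathcal{F}(\gamma)\le\liminf_n\mathcal{F}_n(\gamma_n)$, and applying the continuous monotone $g$ and using that $g$ commutes with $\liminf$ for monotone continuous functions gives $g(\mathcal{F}(\gamma))\le\liminf_n g(\mathcal{F}_n(\gamma_n))$; for limsup, any recovery sequence for $\mathcal{F}$ is also a recovery sequence for $g\circ\mathcal{F}$ by continuity of $g$. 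This is precisely Lemma \ref{gamma_konv_verkettung} from Appendix \ref{app:C}, which I would cite rather than re-prove.

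I anticipate no real obstacle beyond the first step. The only subtle point is that one must know how the $C^1$-recovery sequence from Theorem \ref{limsup_proof} is actually constructed, in order to claim it lies automatically in $C^{1,1}$; an abstract $\Gamma$-convergence statement on $C^1_\textnormal{ia}$ does not by itself restrict to $C^{1,1}_\textnormal{ia}$, since generic $C^1$-recovery sequences need not be Lipschitz in their derivatives. The biarc construction saves this for free.
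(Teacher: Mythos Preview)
Your proposal is correct and follows essentially the same route as the paper: restrict the $\Gamma$-convergence from $C^1_\textnormal{ia}$ to $C^{1,1}_\textnormal{ia}$ by observing (via Remark \ref{rem:actually-biarcs}) that the recovery sequence in Theorem \ref{limsup_proof} already consists of $C^{1,1}$ biarc curves, and then compose with $g(x)=x^{1/q}$ using Lemma \ref{gamma_konv_verkettung}. Your discussion of why the restriction step is not automatic is a useful addition but otherwise the arguments coincide.
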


Next we compare two different discrete tangent-point energies.

\begin{lemma}
\label{Hoelder_discrete_energy}
Let $n,m,k \in \N$, $2\le k \leq m$ and 
$\g\in C^1(\R/\Z,\R^3)$
with length  $\mathscr{L}(\g)$. Then
$
\textstyle
\left(\E_k^n\right)^{\frac{1}{k}}(\g) \leq \left(\frac{4\mathscr{L}
(\g)^2n(n-1)}{n^2} \right)^{\frac{1}{k}-\frac{1}{m}}\left(\E_m^n\right)^{\frac{1}{m}}(\g).
$
\begin{proof}
We only have to consider the case that $\g\in\mathcal{B}_n$
since otherwise both sides of the inequality are infinite by definition
of the discrete energy $\E_k^n$;
see \eqref{eq:discrete-tan-point} in the introduction.
Denote by $\left(\left(\left[q_i,t_i\right],\left[q_{i+1},t_{i+1}\right] 
\right)\right)_{i=0, .., n-1}$ the point-tangent pairs that 
$\g$ interpolates. For $i\neq j$ define $x_{i,j}:=
\frac{2\;\dist(l(q_i),q_j)}{|q_i-q_j|^2}\geq 0$. Then we estimate
by means 
of the generalized mean inequality for finite sums,  
$(\frac1{\ell}\sum_{i=1}^\ell|a_i|^p)^{\frac1p}\le
(\frac1{\ell}\sum_{i=1}^\ell|a_i|^q)^{\frac1q}$ for $p\le q$
(here for $\ell:=n(n-1)$, $p:=k$, $q:=m$), and by
\eqref{eq:biarc-lengths}
\begin{align*}
\textstyle (\E_k^n)^{\frac{1}{k}}(\g)&
\textstyle
\beq \big(\sum_{i=0}^{n-1} \sum_{j=0,j\neq i}^{n-1} x_{i,j}^k 
\lambda_i \lambda_j \big)^{\frac{1}{k}}
\beq \big(\sum_{i=0}^{n-1} \sum_{j=0,j\neq i}^{n-1} 
\big[x_{i,j} (\lambda_i \lambda_j)^{\frac{1}{k}}\big]^k \big)^{\frac{1}{k}}\\
&\textstyle\bleq 
(n(n-1))^{\frac{1}{k}-\frac{1}{m}}\big(\sum_{i=0}^{n-1} \sum_{j=0,j\neq i}^{n-1} x_{i,j}^m \underbrace{\left(\lambda_i \lambda_j\right)^{\frac{m}{k}}}_{=\left(\lambda_i \lambda_j\right)\left(\lambda_i \lambda_j\right)^{\frac{m}{k}-1}} \big)^{\frac{1}{m}}\\
\bleq (n(n &-1))^{\frac{1}{k}-\frac{1}{m}} \max_{i,j=0,\dots,n-1,i\neq j} 
\textstyle
\underbrace{(\lambda_i \lambda_j)^{\frac{1}{k}-\frac{1}{m}}}_{\overset{\eqref{eq:biarc-lengths}}{\leq} 
\left(\frac{4\mathscr{L}(\g)^2}{n^2}\right)^{\frac{1}{k}-\frac{1}{m}}}
\underbrace{\textstyle\left(\sum_{i=0}^{n-1} \sum_{j=0,j\neq i}^{n-1} x_{i,j}^m \lambda_i \lambda_j\ \right)^{\frac{1}{m}}}_{=\left(\E_m^n\right)^{\frac{1}{m}}(\g)}\\
\end{align*}
\end{proof}
\end{lemma}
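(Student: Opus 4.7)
The plan is to recognise the discrete tangent-point energies as weighted discrete $\ell^p$-norms and apply a power-mean (generalised mean) inequality to pass from the $\ell^k$-norm to the $\ell^m$-norm, using the upper bound on the biarc lengths from \eqref{eq:biarc-lengths} to absorb the extra mass.

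First I would dispose of the trivial cases. If $\gamma \notin \mathcal{B}_n$ then $\E_m^n(\gamma) = \infty$ by the definition \eqref{eq:discrete-tan-point}, so the inequality holds trivially; hence assume $\gamma \in \mathcal{B}_n$. Setting $x_{i,j} := \frac{2\dist(l(q_j),q_i)}{|q_i-q_j|^2} \geq 0$ for $i \ne j$ (where $(q_i,t_i)$ are the point-tangent data of $\gamma$), I would rewrite
\begin{align*}
(\E_k^n)^{1/k}(\gamma) = \Bigl(\sum_{i\neq j} \bigl(x_{i,j} (\lambda_i\lambda_j)^{1/k}\bigr)^k\Bigr)^{1/k},
\end{align*}
i.e.\ an $\ell^k$-norm (with respect to counting measure on the $N := n(n-1)$ off-diagonal pairs) of the quantities $a_{i,j} := x_{i,j}(\lambda_i\lambda_j)^{1/k}$.

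The key step is the power-mean inequality $\bigl(\tfrac1N \sum |a_i|^p\bigr)^{1/p} \leq \bigl(\tfrac1N \sum |a_i|^q\bigr)^{1/q}$ for $p \leq q$, applied with $p=k$, $q=m$ and $N = n(n-1)$. Multiplying through by $N^{1/p}$ this yields
\begin{align*}
\Bigl(\sum_{i\neq j} a_{i,j}^k\Bigr)^{1/k} \leq N^{1/k - 1/m} \Bigl(\sum_{i\neq j} a_{i,j}^m\Bigr)^{1/m} = (n(n-1))^{1/k - 1/m} \Bigl(\sum_{i\neq j} x_{i,j}^m (\lambda_i\lambda_j)^{m/k}\Bigr)^{1/m}.
\end{align*}

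Now I would split $(\lambda_i\lambda_j)^{m/k} = \lambda_i\lambda_j \cdot (\lambda_i\lambda_j)^{m/k - 1}$. Since $m \ge k$, the exponent $m/k - 1$ is non-negative, so the upper bound $\lambda_i\lambda_j \leq 4\mathscr{L}(\gamma)^2/n^2$ (a consequence of $\lambda_i \leq 2\mathscr{L}(\gamma)/n$ from \eqref{eq:biarc-lengths}) may be pulled out of the sum to estimate
\begin{align*}
\Bigl(\sum_{i\neq j} x_{i,j}^m (\lambda_i\lambda_j)^{m/k}\Bigr)^{1/m} \leq \bigl(4\mathscr{L}(\gamma)^2/n^2\bigr)^{1/k - 1/m} (\E_m^n)^{1/m}(\gamma).
\end{align*}
Combining the two displayed inequalities gives exactly the claimed bound with constant $\bigl(4\mathscr{L}(\gamma)^2 n(n-1)/n^2\bigr)^{1/k - 1/m}$.

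There is no serious obstacle: the only point requiring attention is the direction in which \eqref{eq:biarc-lengths} must be applied (namely the upper bound, which is what is useful since $m/k-1 \geq 0$), and the reduction to the case $\gamma \in \mathcal{B}_n$ so that all quantities involved are finite.
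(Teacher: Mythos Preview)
Your proof is correct and follows essentially the same approach as the paper: reduce to $\gamma\in\mathcal{B}_n$, rewrite the energy as an $\ell^k$-norm of $x_{i,j}(\lambda_i\lambda_j)^{1/k}$ over the $n(n-1)$ off-diagonal pairs, apply the power-mean inequality to pass to the $\ell^m$-norm, then split $(\lambda_i\lambda_j)^{m/k}=(\lambda_i\lambda_j)(\lambda_i\lambda_j)^{m/k-1}$ and bound the surplus factor via the upper bound in \eqref{eq:biarc-lengths}. The only cosmetic difference is that the paper pulls the bound out as a maximum over $(i,j)$ before estimating, whereas you bound $(\lambda_i\lambda_j)^{m/k-1}$ termwise inside the sum; these are equivalent.
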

{\it Proof of Theorem \ref{gamma_conv_rope}.}\,
It suffices to prove the $\Gamma$-convergence for $L=1$, since
then the statement for general $L$ follows from
the scaling property and parametrization invariance of the energies 
involved. Indeed, assume the theorem was proven for $L=1$. Now take $L\neq 1$ and let $(\g_n)_{n\in \N}\subset
C^{1,1}_\textnormal{ia}(\Sl,\R^3)$ with $\g_n\to\g$ in $C^1$
as $n\to\infty$. Denote by $\tilde{\g}_n$ the arclength parametrization of $\frac{\g_n}{L}$. By Lemma \ref{c1_norm_bogenlaenge} this implies $\tilde{\g}_n \to \tilde{\g}$ in $C^1$ as $n \to \infty$, where $\tilde{\g}$ is the arclength parametrization of $\frac{\g}{L}$. Together with the fact 
that the ropelength functional is invariant under reparametrization and 
scaling, the liminf inequality for $L=1$ yields the liminf equality
for general $L$:
\begin{align*}
\rope(\g)=\rope(\tilde{\g})\leq 
\liminf_{n \to \infty}\left(\E_n^n\right)^{\frac{1}{n}}(\tilde{\g}_n)
=\liminf_{n \to \infty}\left(\E_n^n\right)^{\frac{1}{n}}{\textstyle
\big(\frac{\g_n}{L}\big)}
\overset{\eqref{scalings}}{=}
\liminf_{n \to \infty}L^{\frac{n-2}{n}}\left(\E_n^n\right)^{\frac{1}{n}}\left(\g_n\right).
\end{align*}
For the limsup inequality let $\g \in C^{1,1}_\textnormal{ia}(\Sl,\R^3)$. Then $\tilde{\g}(x):=\frac{\g(Lx)}{L}$ is the arclength parametrization of $\g$ scaled to unit length. Hence, there exists a  recovery sequence $(\tilde{\g}_n)_{n\in \N}\subset
C^{1,1}_\textnormal{ia}(\Sc,\R^3)$ such that
\begin{align}
\tilde{\g}_n \stackrel{C^1}{\to} \tilde{\g} \text{ as } n \to \infty \quad \text{and} \quad \limsup_{n \to \infty}\left(\E_n^n\right)^{\frac{1}{n}}(\tilde{\g}_n)\leq \rope(\tilde{\g}).
\label{recovery_scaled}
\end{align}
Define the reparametrization
$\varphi: [0,L] \to [0,1], x \mapsto \frac{x}{L}$ and set $\g_n(x):=
L
\tilde{\g_n}(\varphi(x))$ and $\hat{\g}(x):=
L\tilde{\g}(\varphi(x))$. Note that $\g_n$ is parametrized by 
arclength and that $\hat{\g}=\g$ holds. Then $\g_n \to \hat{\g}=\g$ in $C^1$ for $n \to \infty$ by \eqref{recovery_scaled}. Again, by the scaling property of the energies and the invariance under reparametrization we deduce with \eqref{recovery_scaled}
\begin{align*}
\limsup_{n \to \infty}
L^{\frac{n-2}{n}}\left(\E_n^n\right)^{\frac{1}{n}}(\g_n)
\overset{\eqref{scalings}}{=}
\limsup_{n \to \infty}
\left(\E_n^n\right)^{\frac{1}{n}}(\tilde{\g}_n)\leq 
\rope(\tilde{\g})=\rope(\g).
\end{align*}
So, it remains to prove the statement of Theorem 
\ref{gamma_conv_rope}
for $L=1$, and for that we take
a general sequence $(\g_n)_{n\in\N}\subset
C^{1,1}_\textnormal{ia}(\Sc,\R^3)$ with $\g_n\to\g$ in $C^1$
as $n\to\infty$. 

By Lemma \ref{gamma_conv_discrete_cont} for $q:=k$ 
\begin{align}
\label{ineq2}
(\TP_k)^{\frac{1}{k}}(\gamma)\leq \liminf_{n \to \infty}
(\E_k^n)^{\frac{1}{k}}(\gamma_n)=
\lim_{n\to\infty}\inf_{n \geq k}
(\E_k^n)^{\frac{1}{k}}(\gamma_n).
\end{align}
For $k \leq n$ we apply Lemma \ref{Hoelder_discrete_energy} to 
$\g:=\g_n$ and $m:=n$ to find
\begin{align*}
(\E_k^n)^{\frac{1}{k}}(\gamma_n) \leq \textstyle
\big(\frac{4\mathscr{L}(\gamma_n)^2 n(n-1)}{n^2} 
\big)^{\frac{1}{k}-\frac{1}{n}}(\E_n^n)^{\frac{1}{n}}(\gamma_n).
\end{align*}
Together with \eqref{ineq2} and $\mathscr{L}(\g_n)=1$ for all $n\in\N$,
this yields
\begin{align}
\label{ineq3}
 (\TP_k)^{\frac{1}{k}}(\gamma) \leq \lim_{n \to \infty}
\inf_{n \geq k}\textstyle\big(\frac{4n(n-1)}{n^2} \big)^{\frac{1}{k}-
\frac{1}{n}}(\E_n^n)^{\frac{1}{n}}(\gamma_n).
\end{align}
Now we have
\begin{align*}
\lim_{n \to \infty} {\textstyle
\big(\frac{4n(n-1)}{n^2} \big)^{\frac{1}{k}-\frac{1}{n}}}=
\lim_{n \to \infty} \exp\textstyle
\big( \left(\frac{1}{k}-\frac{1}{n} \big)\log\left(\frac{4n(n-1)}{n^2} 
\right)\right)=\exp\left(\frac{1}{k}\log(4) \right)=4^{\frac{1}{k}}.
\end{align*}
Combining this with the pointwise convergence in Lemma
\ref{gamma_conv_cont_tpe_ropelength} 
and \eqref{ineq3} we arrive at the desired liminf inequality:
\begin{align*}
\rope(\gamma)&\beq \lim_{k \to \infty} (\TP_k)^{\frac{1}{k}}(\gamma)
\bleq[\eqref{ineq3}] \lim_{k \to \infty} \lim_{n \to \infty}
\inf_{n \geq k}\textstyle
\big(\frac{4n(n-1)}{n^2} \big)^{\frac{1}{k}-\frac{1}{n}}
(\E_n^n)^{\frac{1}{n}}(\gamma_n)\\
&\beq \lim_{k \to \infty} 4^{\frac{1}{k}} \liminf_{n \to \infty} \left(\E_n^n\right)^{\frac{1}{n}}(\gamma_n)
\beq \liminf_{n \to \infty} \left(\E_n^n\right)^{\frac{1}{n}}(\gamma_n).
\end{align*}

To verify the limsup inequality let 
$\gamma \in  C_\textnormal{ia}^{1,1}\left(\Sc,\R^3\right)$, 
and for $n \in \N$ set $s_{n,i}:=\frac{i}{n}$ for $i=0,\dots,n$. 
Then we have $|s_{n,i+1}-s_{n,i}|=\frac{1}{n}$ for all $i=0,\dots,n-1$,
and therefore a sequence of $(c_1-c_2)$-distributed
partitions with $c_1=c_2=1$; see Defintition \ref{def_part}.
 Now we follow the proof of Theorem \ref{limsup_proof}. However, 
 since 
 $\gamma$ is now 
 a $C^{1,1}$-curve, we do not have to work with convolutions, but can 
 follow the proof for $\gamma$ directly. By Lemma 
 \ref{existenz_biarc_kurve_folge} there exists for $n$ sufficiently 
 large a $\gamma$-interpolating, proper and 
 balanced biarc curve $\tilde{\beta}_n$ interpolating 
 the point-tangent pairs
$
(([\gamma(\sn{i}),\gamma'(\sn{i})],[\gamma(\sn{i+1}),\gamma'(\sn{i+1})]))_{i=0, \dots, n-1}.
$
Then we obtain by Theorem \ref{Laengenkonvergenz} that
$\mathscr{L}(\tilde{\beta}_n)\to \mathscr{L}(\g)=1$ as $n\to\infty$.
Let $\varphi_n$ be the reparametrization function from 
\cite[Appendix A]{smutny_2004} and set $\tilde{B}_n:=\tilde{\beta}_n
\circ\varphi_n$. Then by Theorem \ref{C1_Konvergenz}, 
we have $\tilde{B}_n \to \gamma$ in $C^1$ for $n \to \infty$. 
Setting $B_n:=\mathscr{L}(\tilde{\beta}_n)^{-1} \tilde{B}_n$ 
we obtain  as in the proof of Theorem \ref{limsup_proof} that 
$B_n \to \gamma$ in $C^1$ for $n \to \infty$. 
Let $\beta_n$ be the arclength parametrization of $B_n$. 
By Lemma \ref{c1_norm_bogenlaenge} we finally arrive at 
$\beta_n \to \gamma$ in $C^1$ for $n \to \infty$. The biarc-curves
$\beta_n$ are only  reparametrized versions 
of $\tilde{\beta}_n$ rescaled by the factor 
$\mathscr{L}(\tilde{\beta}_n)^{-1}$
so that we can show exactly as 
in the proof of Theorem \ref{limsup_proof}  that 
$\beta_n \in \mathcal{B}_n$ for $n$ sufficiently large. Moreover,
due to the $C^1$-convergence towards $\g$, the
$\beta_n$ are  also injective for $n$ large enough.
Since $\beta_n$ is scaled to unit length and parametrized by arclength, 
we have $\beta_n \in C_\textnormal{ia}^{1,1}(\Sc,\R^3)$ for 
$n$ sufficiently large. Set $L_n:=\mathscr{L}(\tilde{\beta}_n)$.
By the scaling property of the discrete 
tangent-point 
energy \eqref{scalings}
and its parameter invariance  
 we have
\begin{align}
\label{scaling}
(\E_k^n)^{\frac{1}{k}}(\beta_n)=
L_n^{1-\frac{2}{k}}
(\E_k^n)^{\frac{1}{k}}(\tilde{\beta}_n)\quad\Foa k > 2.
\end{align}
Abbreviating
$
x_{i,j}:=\frac{2\dist(\gamma(s_{n,i})+\R\gamma'(s_{n,i}),\gamma(s_{n,j}))}{\left| \gamma(s_{n,i})-\gamma(s_{n,j})\right|^2}
$
for $i,j=0,\dots,n-1$ with $i\neq j$ we can write and estimate
for sufficiently large $n\in\N$
\begin{align}
\left(\E_k^n\right)^{\frac{1}{k}}(\beta_n)&\beq[\eqref{scaling}]L_n^{1-\frac{2}{k}}\left(\E_k^n\right)^{\frac{1}{k}}(\tilde{\beta}_n)
\beq \textstyle
L_n^{1-\frac{2}{k}} \big(\sum_{i=0}^{n-1} \sum_{j=0,j\neq i}^{n-1} x_{i,j}^k \lambda_i \lambda_j \big)^{\frac{1}{k}}\notag\\
&\textstyle
\beq L_n^{1-\frac{2}{k}} \big(\frac{1}{n(n-1)} \sum_{i=0}^{n-1} 
\sum_{j=0,j\neq i}^{n-1} n(n-1) x_{i,j}^k 
\lambda_i \lambda_j
\big)^{\frac{1}{k}}\notag\\
&\textstyle
\bleq L_n^{1-\frac{2}{k}} \big(\frac{4L_n^2 n (n-1)}{n^2} 
\big)^{\frac{1}{k}}\big(\frac{1}{n(n-1)} \sum_{i=0}^{n-1} 
\sum_{j=0,j\neq i}^{n-1}  x_{i,j}^k \big)^{\frac{1}{k}}.\label{4}
\end{align}
Here, we used \eqref{eq:biarc-lengths} for 
$\tilde{\beta}_n\in\mathcal{B}_n$, which can be verified for
$n$ sufficiently large exactly as 
for the $\tilde{\beta}_n^k$
in Step 4 of the proof of Theorem \ref{limsup_proof}; see in particular
\eqref{new}.
Now, observe that by
\cite[E3.4]{alt_2016} applied to the
discrete measure $\mu:=\sum_{i=0}^{n-1} \sum_{j=0,j\neq i}^{n-1}
\delta_{(\sn{i},\sn{j})}$ with $\mu((\Sl)^2)=n(n-1)$  we have
\begin{align}\label{3}
\lim_{k \to \infty}{\textstyle \big(
\frac{1}{n(n-1)} \sum_{i=0}^{n-1} \sum_{j=0,j\neq i}^{n-1}  x_{i,j}^k 
\big)^{\frac{1}{k}}}=\max_{i,j=0,\dots,n-1,i\neq j}
x_{i,j}.
\end{align}
With 
$
\lim_{k \to \infty} L_n^{1-\frac{2}{k}} \big(\frac{4L_n^2 n (n-1)}{n^2} 
\big)^{\frac{1}{k}}=L_n.
$
we obtain by means of \eqref{4}, \eqref{3}, and 
\cite[Lemma 2.5]{schuricht-vdm_2003a} for $n$ sufficiently large
\begin{align}
\begin{split}
\label{limsup_ineq1}
\limsup_{k \to \infty} \left(\E_k^n\right)^{\frac{1}{k}}(\beta_n) &
\overset{\eqref{4},\eqref{3}}{\leq} 
L_n \max_{i,j=0,\dots,n-1,i\neq j} x_{i,j}
\leq L_n \sup_{s,t \in \Sc, s\neq t} \textstyle
\frac{2\dist(\gamma(s)+\R\gamma'(s),\gamma(t))}{\left| \gamma(s)-\gamma(t)\right|^2}\\
&=L_n  \sup_{s,t \in \Sc, s\neq t}\textstyle
\frac{1}{r_\textnormal{tp}
(\gamma(s),\gamma(t))}
=L_n \rope(\gamma).
\end{split}
\end{align}
Now let $k \geq n$. By virtue of Lemma \ref{Hoelder_discrete_energy} 
applied to $\beta_n$, $m:=k$ and replacing the index $k$ in
that lemma by $n$ here,  we have
$
\left(\E_n^n\right)^{\frac{1}{n}}(\beta_n)\leq 
\textstyle(\frac{4L_n n (n-1)}{n^2} )^{\frac{1}{n}-\frac{1}{k}} 
(\E_k^n)^{\frac{1}{k}}(\beta_n),
$
which leads to
\begin{align}
\label{limsup_ineq3}
\left(\E_n^n\right)^{\frac{1}{n}}(\beta_n) &\leq 
\limsup_{k \to \infty}\textstyle\big(\frac{4L_n n (n-1)}{n^2} 
\big)^{\frac{1}{n}-\frac{1}{k}} \left(\E_k^n\right)^{\frac{1}{k}}(\beta_n)\bleq[\eqref{limsup_ineq1}]\textstyle
\big(\frac{4L_n n (n-1)}{n^2} \big)^{\frac{1}{n}} L_n \rope(\gamma)
\end{align}
for $n$ sufficiently large.
Finally, taking the limsup yields the desired limsup inequality
\begin{align*}
\limsup_{n \to \infty} \left(\E_n^n\right)^{\frac{1}{n}}(\beta_n)
\bleq[\eqref{limsup_ineq3}] \limsup_{n \to \infty} 
\underbrace{\big(\textstyle\frac{4L_n n (n-1)}{n^2} 
\big)^{\frac{1}{n}}}_{\to 1} \underbrace{L_n}_{\to \mathscr{L}(\g)=1} \rope(\gamma)=\rope(\gamma)
\end{align*}
\hfill $\Box$

{\it Proof of Corollary \ref{conv_mini_ropelength}.}\,
Apply Lemma \ref{konv_minimierer} to 
the metric space
$
Y=X=C_\textnormal{ia}^{1,1}\left(\Sc, \R^3 \right)
\cap\mathcal{K}
$
with metric induced by the $C^1$-norm.
Notice as in the proof of Corollary \ref{cor:convergence-mini-tp}
that according to \cite{reiter_2005} the knot class
$\mathcal{K}$
is stable under $C^1$-convergence.
Since $\rope(\gamma)<\infty$ holds, we obtain by
\cite[Lemma 2]{gonzalez-etal_2002} that
$\gamma \in C_\textnormal{ia}^{1,1}\left(\Sc, \R^3 \right)$.

\appendix
\section{Convergence of convolutions in $W^{2-\frac{1}{q},q}(\Sl,\R^3)$}
\label{proof_convolution_convergence}
For fixed $L>0$, $s\in (0,1)$ and $\rho\in [1,\infty)$
define the seminorm $[f]_{s,\rho}$
of an $L$-periodic locally $\rho$-integrable
function $f:\R\to\R^n$ as
\begin{equation}\label{semi-norm}
[f]_{s,\rho}:=\textstyle
\int_{\Sl}\int_{\Sl}\frac{|f(x)-f(y)|^\rho}{|x-y|_{\R/L\Z}^{1+s\rho}}
\d x \d y,
\end{equation}
where $|x-y|_{\R/L\Z}$ denotes the periodic distance on $\R$ defined
in \eqref{eq:periodic-norm}. Then the \emph{periodic fractional\footnote{Also known as periodic Sobolev-Slobodecki\v{\i} space.} Sobolev space}
$W^{1+s,\rho}(\R/L\Z,\R^n)$ consists of those Sobolev functions 
$f\in W^{1,\rho}(\R/L\Z,\R^n)$ whose weak derivatives $f'$ have finite
 seminorm $[f']_{s,\rho}$.
The norm on $W^{1+s,\rho}(\R/L\Z,\R^n)$ 
is given by $(\|f\|_{W^{1,\rho}}+
[f']_{s,\rho})^{\frac1\rho}$

{\it Proof of Theorem \ref{konv_faltung}.}\,
The case $\rho=\frac1s$ is treated in 
\cite[Theorem 1.3]{blatt_2019b},
so we may assume from now on that $\rho>\frac1s $.\\
\textbf{Step 1:}
According to the Morrey-Sobolev embedding
\cite[Theorem A.2]{knappmann-etal_2022} we have
$\g\in C^1(\R/L\Z,\R^3)$
which implies that $\g'$ is of \emph{vanishing mean oscillation},
in short $\g'\in\VMO(\R/L\Z,\R^3)$, 
that is
$
\textstyle
\lim_{r \to 0} (\sup_{x \in \Sl} \frac{1}{2r}(\int_{B_r(x)} |\gamma'(y) - {\gamma'}_{x,r} |\d y ) )=0,
$
where
$
{\gamma'}_{x,r}:= \frac{1}{2r}\int_{B_r(x)} \gamma'(z)\d z
$
denotes the integral mean. Indeed, 
$\gamma'$ is uniformly continuous so that for every $\varepsilon>0$
there exists a $\delta=\delta(\varepsilon)>0$ such that 
$\left| \gamma'(x)-\gamma'(y)\right|<\frac{\varepsilon}{2}$ for all $x \in \Sl$ and $y \in B_{\delta}(x)$. Let $0<r<\delta$ and $x \in \Sl$. Then
\begin{align*}
\textstyle
\frac{1}{2r} \int_{B_r(x)} 
|\gamma'(y) & - {\gamma'}_{x,r} |\d y 
\leq \sup_{y \in B_r(x)} |\gamma'(y) - \gamma'(x)| + 
|\gamma'(x) - {\gamma'}_{x,r}|\\
& \le 
\textstyle
\frac{\varepsilon}{2} + \frac{1}{2r}\int_{B_r(x)} 
|\gamma'(x)-\gamma'(z) |\d z
< \varepsilon\quad\Foa x\in\R/L\Z,
\end{align*}
thus
$
\sup_{x \in \Sl} \frac{1}{2r}(\int_{B_r(x)} |\gamma'(y) - 
{\gamma'}_{x,r} |\d y ) <\varepsilon,
$
which implies that $\g'\in\VMO(\Sl,\R^3)$ 
since $\varepsilon>0$ was arbitrary. 

\textbf{Step 2:} 
For the lengths $L_\varepsilon:=\mathscr{L}(\gamma_\varepsilon)$ 
and $L:=\mathscr{L}(\g)$ we  estimate
\begin{align}
\label{length_faltung}
\textstyle
|L_\varepsilon-L|
&\leq \textstyle
\int_0^L \big| |\gamma'_\varepsilon(x) | 
- |\gamma'(x) |\big| \d x 
\leq \| \;\left|\gamma'_\varepsilon \right| - \left|\gamma' \right|\;\|_{C^0} L\stackrel{}{\underset{\varepsilon \to 0}{\longrightarrow}}0,
\end{align}
since $\g'\in\VMO(\Sl,\R^3)$ allows us to apply 
\cite[Theorem 1.1]{blatt_2019b} to deduce that 
$\left|\gamma'_\varepsilon \right|$ converges uniformly to 
$|\gamma'|=1$ as $\varepsilon$ tends to $0$. Therefore, there is an
$\varepsilon_0>0$ such that
\begin{align}
\label{bounds_derivative}
\textstyle \frac{1}{2}\leq \big|\frac{L}{L_\varepsilon}\g_\varepsilon' (x)\big|\leq 2\quad\Foa x\in\Sl,\,\varepsilon \in (0,\varepsilon_0].
\end{align}

\textbf{Step 3:} 
Since the convolutions $\g_\varepsilon$ converge to $\g$ in $C^1$
we obtain by means of \eqref{length_faltung} that also the 
rescalings $L\g_\varepsilon/L_\varepsilon$ converge towards $\g$
in $C^1$. According to Lemma \ref{c1_norm_bogenlaenge} we obtain
\begin{equation}\label{double-plus}
\|\tilde{\g}_\varepsilon-\g\|_{C^1}\longrightarrow 0\quad\textnormal{as $\varepsilon\to 0$.}
\end{equation}

\textbf{Step 4:} It remains to show that
$
\left[\tilde{\gamma}'_\varepsilon - \gamma' \right]_{s,\rho} \to 0
$
holds as $\varepsilon \to 0$, since then together with 
\eqref{double-plus} 
we have established $\|\tilde{\g}_\varepsilon-\g\|_{W^{1+s ,\rho}}\to 0$ as $\varepsilon\to 0$. 
Abbreviating the integrand of the seminorm by 
$
\textstyle
I_\varepsilon(x,y):= \frac{\left|\left(\tilde{\gamma}'_\varepsilon(x)- \gamma'(x)\right)-\left(\tilde{\gamma}'_\varepsilon(y)-\gamma'(y) \right) \right|^\rho}{|x-y|_{\Sl}^{1+s\rho}}
$
we
want to apply Vitali's theorem (see, e.g., \cite[3.23]{alt_2016})
to prove 
$\|I_\varepsilon\|_{L^1} \to 0$ 
as $\varepsilon \to 0$. 
Since we have a compact domain it suffices to show that the 
sequence $\left(I_\varepsilon\right)_{\varepsilon>0}$ is uniformly 
integrable and converges pointwise to $0$ a.e. on
$\Sl\times\Sl$.
The pointwise convergence $I_\varepsilon(x,y)\to 0$ (even for all $x\not=y$)
follows from the $C^1$-convergence \eqref{double-plus}.

So, we need to show the uniform integrability. 
In the obvious inequality
\begin{align}\label{obvious}
\textstyle
I_\varepsilon(x,y)\leq 2^{\rho-1} \left[\frac{\left|\tilde{\gamma}'_\varepsilon(x)-\tilde{\gamma}'_\varepsilon(y) \right|^\rho}{|x-y|_{\Sl}^{1+s\rho}} + \frac{\left|\gamma'(x)-\gamma'(y)\right|^\rho}{|x-y|_{\Sl}^{1+s\rho}} \right]
\end{align}
we estimate both summands on the right-hand side separately.

This is easy for the second summand. Fix
$\tilde{\varepsilon}>0$. Since $\gamma$ is in $W^{1+s,\rho}(\Sl,\R^3)$, 
we find
a $\delta_1=\delta_1(\tilde{\varepsilon})>0$ such that for every 
measurable subset $E \subset \left(\Sl\right)^2$ with $|E|<\delta_1$ we obtain
\begin{align}
\label{second_term_integrable}
\textstyle
\int \int_E \frac{\left|\gamma'(x)-\gamma'(y) \right|^\rho}{|x-y|_{\Sl}^{1+s\rho}}\d x\d y < \frac{\tilde{\varepsilon}}{2^\rho}.
\end{align}
Regarding the first summand in \eqref{obvious} 
we consider the arclength function
$
\textstyle
s_\varepsilon(x):=\int_0^x |\frac{L}{L_\varepsilon}\gamma'_\varepsilon(z) |\d z
$ such that $s_\varepsilon(0)=0$.
By \eqref{bounds_derivative}
the derivative
$
s'_\varepsilon(z)=|\frac{L}{L_\varepsilon}\gamma'_\varepsilon(z) |
$
is uniformly bounded away from $0$ for all $\varepsilon\in
(0,\varepsilon_0]$.
As a consequence, $s_\varepsilon$ is 
for $\varepsilon\in (0,\varepsilon_0]$ invertible. 
Let $\tilde{s}_\varepsilon$ denote the inverse function of 
$s_\varepsilon$. 
As a next step, we will show $|s_\varepsilon(x)-s_\varepsilon(y)|_{\Sl}\geq \frac{1}{2}|x-y|_{\Sl}$ for $x,y \in \Sl$ and $\varepsilon\in (0,\varepsilon_0]$. Let $0\leq x < y < L$, so that by monotonicity $0\leq s_\varepsilon(x)\leq s_\varepsilon(y)<L$. First, assume that $|s_\varepsilon(x)-s_\varepsilon(y)|_{\Sl}=|s_\varepsilon(x)-s_\varepsilon(y)|=s_\varepsilon(y)-s_\varepsilon(x)$.  Then we estimate by means of \eqref{bounds_derivative}
\begin{align*}
\textstyle
|s_\varepsilon(x)-s_\varepsilon(y)|_{\Sl}
=\int_x^y \big|\frac{L}{L_\varepsilon}\gamma'_\varepsilon(z) \big|\d z
\overset{\eqref{bounds_derivative}}{\geq} \frac{1}{2}(y-x)\geq \frac{1}{2}|x-y|_{\Sl}.
\end{align*}
If $|s_\varepsilon(x)-s_\varepsilon(y)|_{\Sl}=L-(s_\varepsilon(y)-s_\varepsilon(x))$, then again by \eqref{bounds_derivative}
\begin{align*}
|s_\varepsilon(x)-s_\varepsilon(y)|_{\Sl}&=\textstyle
L-\int_x^y \big|\frac{L}{L_\varepsilon}\gamma'_\varepsilon(z) \big|\d z=\int_0^x \big|\frac{L}{L_\varepsilon}\gamma'_\varepsilon(z) \big|\d z + \int_y^L \big|\frac{L}{L_\varepsilon}\gamma'_\varepsilon(z) \big|\d z\\
&\overset{\eqref{bounds_derivative}}{\geq}\textstyle 
\frac{1}{2}(L-(y-x))\geq \frac{1}{2}|x-y|_{\Sl}.
\end{align*}
In particular, this yields for the inverse function
\begin{align}
\label{inverse_lipschitz}
|\tilde{s}_\varepsilon(x)-\tilde{s}_\varepsilon(y)|_{\Sl}\leq 2 |x-y|_{\Sl}\quad\Foa x, y \in \Sl,\,\varepsilon \in (0,\varepsilon_0].
\end{align}
Due to 
\eqref{length_faltung}
there exists a 
constant $c>0$ such that
\begin{align}
\label{constante_bruch_L}
\textstyle
2^{(2+s)\rho}\left|\frac{L}{L_\varepsilon}\right|^\rho\left[1+2^\rho\left|\frac{L}{L_\varepsilon}\right|^\rho \right] \leq c \quad
\Foa \varepsilon>0.
\end{align}
Now we estimate pointwise for $x \neq y$ with Jensen's inequality
and by \eqref{inverse_lipschitz} 
\begin{align*}
J_\varepsilon(x,y)&\textstyle
:=\frac{\left|\tilde{\gamma}'_\varepsilon(x)-
\tilde{\gamma}'_\varepsilon(y)\right|^\rho}{|x-y|_{\Sl}^{1+s\rho}}
=\frac{\left|\frac{L}{L_\varepsilon}
\g_\varepsilon'(\tilde{s}_\varepsilon(x))\tilde{s}_\varepsilon'(x)
-\frac{L}{L_\varepsilon}\g_\varepsilon'(\tilde{s}_\varepsilon(y))
\tilde{s}_\varepsilon'(y)\right|^\rho}{|\tilde{s}_\varepsilon(x)-\tilde{s}_\varepsilon(y)|_{\Sl}^{1+s\rho}}
\frac{|\tilde{s}_\varepsilon(x)-\tilde{s}_\varepsilon(y)|_{\Sl}^{1+s\rho}}{|x-y|_{\Sl}^{1+s\rho}}\\
&\overset{\eqref{inverse_lipschitz}}{ \leq} \textstyle
2^{(1+s)\rho} \big|\frac{L}{L_\varepsilon}\big|^\rho 
\Big[\frac{|\tilde{s}_\varepsilon'(x)|^\rho
|\g_\varepsilon'(\tilde{s}_\varepsilon(x))
-\g_\varepsilon'(\tilde{s}_\varepsilon(y))|^\rho}{|\tilde{s}_\varepsilon(x)
-\tilde{s}_\varepsilon(y)|_{\Sl}^{1+s\rho}}
+\frac{|\g_\varepsilon'(\tilde{s}_\varepsilon(y))|^\rho
|\tilde{s}_\varepsilon'(x)-\tilde{s}_\varepsilon'(y)|^\rho}{|\tilde{s}_\varepsilon(x)-\tilde{s}_\varepsilon(y)|_{\Sl}^{1+s\rho}} \Big].
\end{align*}
Together with $|\g_\varepsilon'|\le 1$, $|s_\varepsilon'(x)|=
|\frac{L}{L_\varepsilon}\g_\varepsilon'(x)|\in 
\left[\tfrac{1}{2},2\right]$ due to \eqref{bounds_derivative}, 
$|\tilde{s}_\varepsilon'(x)|=
{|\frac{L}{L_\varepsilon}\g_\varepsilon'(\tilde{s}_\varepsilon(x))|^{-1}} 
\in \left[\tfrac{1}{2},2\right]$ for $\varepsilon \in (0,\varepsilon_0]$,
and the estimate
\begin{align*}
|\tilde{s}_\varepsilon'(x)-\tilde{s}_\varepsilon'(y) |
&\textstyle
\beq\Big|{\big|\frac{L}{L_\varepsilon}\g_\varepsilon'(\tilde{s}_\varepsilon(x))\big|^{-1}}-{\big|\frac{L}{L_\varepsilon}\g_\varepsilon'(\tilde{s}_\varepsilon(y))\big|^{-1}}\Big| 
\leq \big|\frac{L}{L_\varepsilon} \big|\frac{\left|\g_\varepsilon'(\tilde{s}_\varepsilon(x))-\g_\varepsilon'(\tilde{s}_\varepsilon(y))\right|}{\left|\g_\varepsilon'(\tilde{s}_\varepsilon(x))\right|\left|\g_\varepsilon'(\tilde{s}_\varepsilon(y))\right|}\\
&\textstyle
\bleq[\eqref{bounds_derivative}] 4\left|\frac{L}{L_\varepsilon} \right| \left|\g_\varepsilon'(\tilde{s}_\varepsilon(x))-\g_\varepsilon'(\tilde{s}_\varepsilon(y))\right|,
\end{align*}
we obtain for all $\varepsilon\in (0,\varepsilon_0]$ the inequality 
\begin{align}\label{J}
J_\varepsilon(x,y)\leq \textstyle
2^{(1+s)\rho}\big|\frac{L}{L_\varepsilon}\big|^\rho\big[2^\rho+4^\rho\big|\frac{L}{L_\varepsilon}\big|^\rho \big]\frac{\left|\g_\varepsilon'(\tilde{s}_\varepsilon(x))-\g_\varepsilon'(\tilde{s}_\varepsilon(y))\right|^\rho}{|\tilde{s}_\varepsilon(x)-\tilde{s}_\varepsilon(y)|_{\Sl}^{1+s\rho}} \bleq[\eqref{constante_bruch_L}]c A_\varepsilon(\psi_\varepsilon(x,y))
\end{align}
for $\textstyle A_\varepsilon(x,y)
:=\frac{\left|\g_\varepsilon'(x)
-\g_\varepsilon'(y)\right|^\rho}{|x-y|_{\Sl}^{1+s\rho}}$ and the transformation 
$\psi_\varepsilon : (\Sl)^2 \to (\Sl)^2$ sending $(x,y)$ to 
$(\tilde{s}_\varepsilon(x),\tilde{s}_\varepsilon(y))$. 
Observe, that by \eqref{bounds_derivative}, $\psi_\varepsilon$ is 
bilipschitz, since $\text{det}(D\psi_\varepsilon(x,y))=
|\tilde{s}_\varepsilon'(x)||\tilde{s}_\varepsilon'(y)|
\in [\tfrac{1}{4},4]$. This implies by \eqref{J} that
\begin{align*}
J_\varepsilon(x,y)\leq 4c A_\varepsilon(\psi_\varepsilon(x,y))\left|\text{det}(D\psi_\varepsilon(x,y))\right|\quad
\Foa \varepsilon \in (0,\varepsilon_0].
\end{align*}
Let now $E \subset \left(\Sl\right)^2$. By change of variables
\begin{align}
\label{absch_erster_Term_I}
\textstyle \int\int_E J_\varepsilon(x,y) \d x \d y&\leq \textstyle 4c \int\int_E A_\varepsilon(\psi_\varepsilon(x,y))\left|\text{det}(D\psi_\varepsilon(x,y))\right| \d x \d y \nonumber\\
&\textstyle
=4c \int\int_{\psi_\varepsilon(E)} A_\varepsilon(x,y) \d x \d y
\quad\Foa \varepsilon\in (0,\varepsilon_0].
\end{align}
It is well-known that the standard convolution $\g_\varepsilon$ 
converges in $W^{1+s, \rho}$ to $\g$; see, e.g., 
\cite[Lemma 11]{fiscella-etal_2015}, which according to
Vitali's theorem implies that
 the $A_\varepsilon(x,y)$
are 
 uniformly integrable. 
In particular, for given $\tilde{\varepsilon}>0$ there exists
$\delta_2=\delta_2(\tilde{\varepsilon})>0$ such that 
if $|\psi_\varepsilon(E)|<\delta_2$, we have
\begin{align}
\label{first_term_integrable}
\textstyle\int\int_{\psi_\varepsilon(E)} A_\varepsilon(x,y)
\d x \d y < \frac{\tilde{\varepsilon}}{c 2^{\rho+2}}\quad\Foa \varepsilon>0.
\end{align}
Since $\psi_\varepsilon$ is uniformly Lipschitz-continuous 
for  $\varepsilon\in (0,\varepsilon_0]$, 
there exists a $\delta_3>0$ such that $|E| <\delta_3$ 
implies $|\psi_\varepsilon(E) |<\delta_2$. 
Now set $\tilde{\delta}:=\min\{ \delta_1, \delta_3\}$ 
so that for any set $E\subset( \Sl)^2$ with $|E|<\tilde{\delta}$ 
we infer by means of \eqref{obvious}, \eqref{absch_erster_Term_I}, 
\eqref{second_term_integrable}, and \eqref{first_term_integrable} 
\begin{align*}
\textstyle\int\int_E I_\varepsilon(x,y)\d x \d y 
\bleq[\eqref{obvious},\eqref{absch_erster_Term_I}]& 2^{\rho-1} 
\textstyle
\Big[4c \int\int_{\psi_\varepsilon(E)} 
A_\varepsilon(x,y)
\d x \d y
+ \int\int_E \frac{\left|\gamma'(x)-\gamma'(y)\right|^\rho}{|x-y|_{\Sl}^{1+s\rho}}\d x \d y\Big]\\
\ble[\eqref{second_term_integrable},\eqref{first_term_integrable}]&
\textstyle
2^{\rho-1} \left[4c \frac{\tilde{\varepsilon}}{c 2^{\rho+2}} + \frac{\tilde{\varepsilon}}{2^\rho}\right]
\beq \tilde{\varepsilon}\quad\Foa \varepsilon\in (0,\varepsilon_0].
\end{align*}
Hence, 
$\left(I_\varepsilon\right)_{\varepsilon\in (0,\varepsilon_0]}$ 
is uniformly integrable.
\hfill $\Box$

\section{Quantitative analysis of $C^1$-curves}\label{app:B}

\begin{lemma}[Injective $C^1$-curves are bilipschitz]
\label{lem:bilipschitz}
For any curve $\g\in C^1_\textnormal{ia}(\R/L\Z,\R^3)$ there is a constant
$c_\g>0$ such that
\begin{equation}\label{eq:bilipschitz}
|t-s|_{\R/L\Z}\le c_\g|\g(t)-\g(s)|\quad\Foa t,s\in\R.
\end{equation}
\end{lemma}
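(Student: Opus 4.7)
The plan is to prove a uniform positive lower bound on the quotient
$q(s,t):=|\gamma(t)-\gamma(s)|/|t-s|_{\R/L\Z}$ on the compact domain $(\R/L\Z)^2$ off the diagonal, and then take $c_\gamma:=1/\inf q$. The natural splitting is ``near the diagonal'' versus ``away from the diagonal'', each handled by a different mechanism (Taylor expansion, respectively compactness plus injectivity).

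\textbf{Step 1 (small $|t-s|_{\R/L\Z}$).} Since $\gamma\in C^1$ on the compact quotient, $\gamma'$ is uniformly continuous with modulus $\wg$, and $\wg(h)\to 0$ as $h\to 0^+$. Choose $\delta\in(0,L/2)$ with $\wg(\delta)\le 1/2$. For $s,t\in\R$ with $0<|t-s|_{\R/L\Z}\le\delta$, picking $k\in\Z$ with $|s-(t+kL)|=|t-s|_{\R/L\Z}$ and writing
\[
\gamma(t)-\gamma(s)=\int_s^{t+kL}\gamma'(u)\d u=(t+kL-s)\gamma'(s)+\int_s^{t+kL}(\gamma'(u)-\gamma'(s))\d u,
\]
the arclength condition $|\gamma'(s)|=1$ together with the bound $\wg(\delta)\le 1/2$ on the tail integral gives
\[
|\gamma(t)-\gamma(s)|\ge (1-\wg(\delta))|t-s|_{\R/L\Z}\ge\tfrac12|t-s|_{\R/L\Z}.
\]

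\textbf{Step 2 (large $|t-s|_{\R/L\Z}$).} The set $K_\delta:=\{(s,t)\in(\R/L\Z)^2:|s-t|_{\R/L\Z}\ge\delta\}$ is compact, and by injectivity of $\gamma$ the continuous function $(s,t)\mapsto|\gamma(s)-\gamma(t)|$ is strictly positive on $K_\delta$; hence it attains a positive minimum $m>0$. Using the trivial bound $|s-t|_{\R/L\Z}\le L/2$, this yields
\[
|\gamma(s)-\gamma(t)|\ge m\ge\tfrac{2m}{L}|s-t|_{\R/L\Z}\quad\textnormal{on }K_\delta.
\]

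\textbf{Step 3 (combine).} Setting $c_\gamma:=\max\{2,L/(2m)\}$ gives \eqref{eq:bilipschitz} in both regimes. The inequality is trivial when $s\equiv t\pmod L$. No serious obstacle arises; the only subtlety is to select, in the near-diagonal case, a non-periodic representative realizing the periodic distance so that the fundamental theorem of calculus applies on an ordinary interval. Injectivity is used exclusively through the strict positivity of $|\gamma(\cdot)-\gamma(\cdot)|$ on the compact set $K_\delta$, which in turn relies on the closedness of the diagonal in the identified periodic domain.
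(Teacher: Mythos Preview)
Your proof is correct and follows essentially the same approach as the paper: a near-diagonal estimate via the Taylor expansion and the modulus of continuity of $\gamma'$ (with the same threshold $\wg(\delta)\le\tfrac12$), and a far-from-diagonal bound via injectivity and compactness, combined into $c_\gamma=\max\{2,L/(2m)\}$. The paper's argument is identical apart from notation, and you are a touch more explicit about selecting a representative realizing the periodic distance.
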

\begin{proof}
Using  the  Taylor expansion
\begin{equation}\label{eq:taylor}
\textstyle
\g(s+h)-\g(s)=\int_s^{s+h}\g'(\tau)\d \tau =
\g'(s)h+\int_s^{s+h}(\g'(\tau)-\g'(s))\d \tau
\end{equation}
we choose $h_0=h_0(\g)\in (0,\frac{L}2]$ such that  
$\omega_{\g'}(h_0)\le
\frac12$
to infer for all $s\in\R$
\begin{equation}\label{eq:local-bilipschitz}
\textstyle
|\g(s+h)-\g(s)|\ge (1-\omega_{\g'}(h_0))|h|\ge\frac12 |h|=
\frac12 |h|_{\R/L\Z}\quad\Foa |h|\le h_0.
\end{equation}
On the other hand, since $\g$ is injective,
we find a constant $\delta_0=\delta_0(\g)>0$ such 
that 
\begin{equation}\label{eq:global-bilipschitz}
\textstyle
|\g(s+h)-\g(s)|\ge \delta_0\ge\frac{2\delta_0}L |h|
=\frac{2\delta_0}L |h|_{\R/L\Z}\quad\Foa s\in\R,\,
h_0\le |h|\le\frac{L}2.
\end{equation}
which implies \eqref{eq:bilipschitz} for $c_\g:=\max\{2,
\frac{L}{2\delta_0}\}.$\\
\end{proof}

\begin{lemma}
\label{absch_sk}
Let $\gamma \in C^1_\textnormal{ia}\left(\Sl,\R^3\right)$ and
$h\in (0,\frac{L}2]$ such that $\wg(h)<1$ and $h\leq \frac{L}{2}$. 
Then, for every $s \in \R$, 
\begin{align}
1-\wg(h)&\leq \textstyle\frac{\left\langle \gamma(s+h)-\gamma(s),\gamma'(s)\right\rangle}{\diffgamma{s+h}{s}} \leq 1 \label{bruch_s},\\
1-\wg(h)&\leq \textstyle\frac{\left\langle \gamma(s+h)-\gamma(s),\gamma'(s+h)\right\rangle}{\diffgamma{s+h}{s}} \leq 1.\label{bruch_s+h}
\end{align}
\begin{proof}
Since $0<h\leq \frac{L}{2}$  we have $|s+h-s|_{\Sl} = h$;
see \eqref{eq:periodic-norm}.
We use the Taylor expansion \eqref{eq:taylor} to estimate
\begin{align}
\label{absch_skalarprodukt_s}
\big\langle \gamma(s+h)-\gamma(s),\gamma'(s)\big\rangle
\geq h\Big[1-\sup_{\tau \in [s,s+h]}|\gamma'(\tau)-\gamma'(s)|\Big] 
\geq h\left[1- \wg(h)\right]
\end{align}
and analogously
\begin{align}
\label{absch_skalarprodukt_s+h}
\left\langle \gamma(s+h)-\gamma(s),\gamma'(s+h)\right\rangle
&\geq h\left[1- \wg(h)\right].
\end{align}
Using the above estimate for the inner product and
the Lipschitz estimate $|\g(s+h)-\g(s)|\le h$ we can deduce
\begin{align*}
\textstyle\frac{\left\langle \gamma(s+h)-\gamma(s),\gamma'(s)\right\rangle}{\diffgamma{s+h}{s}}\bgeq[\eqref{absch_skalarprodukt_s}] \frac{h(1-\wg(h))}{|\gamma(s+h)-\gamma(s)|}\geq \frac{h(1-\wg(h))}{h}= 1-\wg(h).
\end{align*}
Applying the Cauchy-Schwarz inequality yields the right part of inequality \eqref{bruch_s}.
Thus, statement \eqref{bruch_s} is shown. In the same manner we can conclude the statement \eqref{bruch_s+h} with the Cauchy-Schwarz inequality and estimate \eqref{absch_skalarprodukt_s+h}.
\end{proof}
\end{lemma}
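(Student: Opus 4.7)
The lemma asks for two two-sided bounds on the normalized directional inner products, and both inequalities have identical structure, so I will describe a symmetric plan. First, since $h\in(0,L/2]$, the periodic distance reduces to $|s+h-s|_{\Sl}=h$, so I can treat the arguments as living in a classical real interval. Both upper bounds are immediate: because $\g$ is arclength parametrized, $|\g'(s)|=|\g'(s+h)|=1$, and Cauchy--Schwarz gives $\langle \g(s+h)-\g(s),\g'(\sigma)\rangle\le |\g(s+h)-\g(s)|$ for either choice of $\sigma\in\{s,s+h\}$, hence the quotients are at most $1$.

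For the lower bound in \eqref{bruch_s}, I would use the Taylor / fundamental theorem identity
\begin{equation*}
\g(s+h)-\g(s)=h\g'(s)+\int_s^{s+h}\bigl(\g'(\tau)-\g'(s)\bigr)\d\tau,
\end{equation*}
take the inner product with $\g'(s)$, and bound the remainder integrand by $|\g'(\tau)-\g'(s)|\le \wg(h)$ for $\tau\in[s,s+h]$ using the definition of the modulus of continuity together with $|\g'(s)|=1$. This yields
\begin{equation*}
\langle \g(s+h)-\g(s),\g'(s)\rangle \ge h\bigl(1-\wg(h)\bigr).
\end{equation*}
Combining this with the Lipschitz-type bound $|\g(s+h)-\g(s)|\le \int_s^{s+h}|\g'(\tau)|\d\tau=h$ (which comes again from arclength parametrization) gives
\begin{equation*}
\frac{\langle \g(s+h)-\g(s),\g'(s)\rangle}{|\g(s+h)-\g(s)|}\ge \frac{h(1-\wg(h))}{h}=1-\wg(h),
\end{equation*}
as desired.

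For \eqref{bruch_s+h} I would repeat the same argument with the Taylor expansion recentered at $s+h$, namely $\g(s+h)-\g(s)=h\g'(s+h)+\int_s^{s+h}(\g'(s+h)-\g'(\tau))\d\tau$, take the inner product with $\g'(s+h)$, and apply the same modulus-of-continuity and Lipschitz bounds.

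There is no real obstacle here; the proof is essentially a two-line calculation once the right Taylor identity is written down. The only point where one must be slightly careful is to notice that the denominator bound $|\g(s+h)-\g(s)|\le h$ is precisely what allows the factor of $h$ produced by the numerator estimate to cancel cleanly, so that the final lower bound depends only on $\wg(h)$ and not on $h$ itself. The hypothesis $\wg(h)<1$ is not actually needed for the inequalities to hold, only to guarantee that the lower bound is meaningful (non-negative).
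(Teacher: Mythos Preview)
Your proposal is correct and follows essentially the same approach as the paper: Taylor expansion to bound the numerator below by $h(1-\wg(h))$, the arclength Lipschitz bound $|\g(s+h)-\g(s)|\le h$ for the denominator, and Cauchy--Schwarz for the upper bound. Your observation that the hypothesis $\wg(h)<1$ is only needed to make the lower bound informative (rather than for the inequalities themselves) is also accurate.
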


\begin{lemma}
\label{c1_norm_bogenlaenge} \cite[cf. Lemma B.8]{strzelecki-vdm_2023}
Let $\gamma \in C^1\left([a,b],\R^3\right)$ satisfy $|\gamma'|\geq v_\gamma >0$ and  $\mathscr{L}(\gamma)>0$ and let $\Gamma \in C^1\left([0,\mathscr{L}(\gamma)],\R^3\right)$ be the arclength parametrization. Suppose that $\beta \in C^1\left([a,b],\R^3 \right)$ has equal length, i.e. 
$\mathscr{L}(\gamma)=\mathscr{L}(\beta)$, and satisfies 
\begin{align}
\label{distance}
\textstyle\| \gamma - \beta \|_{C^1\left([a,b],\R^3 \right)} < \varepsilon \leq \frac{v_\gamma}{2}.
\end{align}
Then $\beta$ possesses an arclength parametrization $B\in C^1\left([0,\mathscr{L}(\gamma)],\R^3 \right)$ with
\begin{align}\label{B9A}
\textstyle
\| \Gamma - B \|_{C^1\left([0,\mathscr{L}(\gamma)],\R^3 \right)} \leq  \frac{2}{v_\gamma}\omega_{\gamma'}\left(\frac{(b-a)\varepsilon}{v_\gamma} \right) + \omega_{\gamma}\left(\frac{(b-a)\varepsilon}{v_\gamma} \right) + \varepsilon \left(1+ \frac{2}{v_\gamma}\right),
\end{align}
where $\omega_{\gamma}$ denotes the modulus of continuity of $\gamma$ and $\omega_{\gamma'}$ denotes the modulus of continuity of the tangent $\gamma'$ of $\gamma$.
\begin{proof}
Without loss of generality, we can assume $\Gamma(s)=\gamma(t(s))$ for $s \in [0,\mathscr{L}(\gamma)]$ where $t : [0,\mathscr{L}(\gamma)] 
\to [a,b]$ is 
the inverse function of the arclength function
$
s(t):=\int_a^t |\gamma'(u)| \d u
$
for $t \in [a,b]$. Furthermore, the conditions $|\gamma'|\geq v_\gamma$ and \eqref{distance} imply
$
|\beta'(t)|\geq  \frac{v_\gamma}{2}>0
$
for all $t \in [a,b]$. Hence, the arclength function of $\beta$,
$
\sigma(t):=\int_a^t |\beta'(u)|\d u
$
is therefore also
invertible. Let $\tau : [0,\mathscr{L}(\gamma)] \to [a,b]$ be the inverse function of $\sigma$ and define the arclength parametrization of $\beta$ as
$
B(s):=\beta(\tau(s))
$
for $s\in [0,\mathscr{L}(\gamma)]$. Now fix an $s \in 
[0,\mathscr{L}(\gamma)]$. Then there exist unique $t,\tau \in [a,b]$ such that $s=s(t)=\sigma(\tau)$. This leads to
\begin{align}
0 &= \sigma(\tau) - s(t) 
=\textstyle
\int_a^{\tau} \left( |\beta'(u)| - |\gamma'(u)| \right) \d u - \int_{\tau}^t |\gamma'(u)|\d u. \label{equality}
\end{align}
Thus we can estimate
\begin{align}\label{absch_t_tau}
\textstyle
v_\gamma |t-\tau| \bleq \big|\int_{\tau}^t |\gamma'(u)|\d u  \big|
\beq[\eqref{equality}] \left|\int_a^{\tau} \left(|\beta'(u)|-|\gamma'(u)| \right)\d u \right| \bleq[\eqref{distance}] \varepsilon(b-a).
\end{align}
Now we can use \eqref{distance} and \eqref{absch_t_tau} to estimate the distance between $\Gamma$ and $B$ by
\begin{align}
\label{absch_c0}
\left|\Gamma(s)-B(s)\right| &\beq |\gamma(t)-\beta(\tau)| 
\bleq |\gamma(t)-\gamma(\tau)| + |\gamma(\tau)-\beta(\tau)| \nonumber\\
&\bleq[\eqref{distance}] \omega_{\gamma}(|t-\tau|) + \varepsilon 
\bleq[\eqref{absch_t_tau}] \textstyle
\omega_{\gamma}\big(\frac{(b-a)\varepsilon}{v_\gamma} \big) 
+ \varepsilon.
\end{align}
With 
$
\tau'(s)=\frac{1}{\sigma'(\tau(s))}=\frac{1}{\left|\beta'(\tau(s))\right|}
$
we obtain
$
B'(s)=\beta'(\tau(s))\tau'(s)=\frac{\beta'(\tau(s))}{\left|\beta'(\tau(s))\right|}
$
and analogously 
$
\Gamma'(s)=\frac{\gamma'(t(s))}{\left|\gamma'(t(s)) \right|}.
$
This leads to
\begin{align}
\label{absch_c1}
&\left|\Gamma'(s)-B'(s)\right| \beq
\textstyle
\big|\frac{\gamma'(t)}{|\gamma'(t)|} - \frac{\beta'(\tau)}{|\beta'(\tau)|} \big| \nonumber\\
\bleq& \textstyle
\frac{\left|\gamma'(t)-\gamma'(\tau) \right|}{|\gamma'(t)|} + |\gamma'(\tau)|\big| \frac{1}{|\gamma'(t)|} - \frac{1}{|\gamma'(\tau)|}\big| 
  + \frac{\left|\gamma'(\tau) - \beta'(\tau)\right|}{|\gamma'(\tau)|} + |\beta'(\tau)|\big| \frac{1}{|\gamma'(\tau)|} - \frac{1}{|\beta'(\tau)|} 
  \big| \nonumber \\
\bleq& \textstyle
\frac{1}{v_\gamma}\omega_{\gamma'}(|t-\tau|) + \frac{1}{v_\gamma}\omega_{\gamma'}(|t-\tau|)  + \frac{\varepsilon}{v_\gamma} + \frac{\varepsilon}{v_\gamma} 
\bleq[\eqref{absch_t_tau}] \frac{2}{v_\gamma}\big( \omega_{\gamma'}\big(\frac{(b-a)\varepsilon}{v_\gamma} \big) + \varepsilon \big).
\end{align}
With \eqref{absch_c0} and \eqref{absch_c1}, we deduce \eqref{B9A}.
\end{proof}
\end{lemma}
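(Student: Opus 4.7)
\textbf{Proof plan for Lemma \ref{c1_norm_bogenlaenge}.}
The plan is to set up the two arclength functions $s(t):=\int_a^t|\g'(u)|\,du$ and $\sigma(t):=\int_a^t|\b'(u)|\,du$ in parallel and compare them through the equal-length hypothesis $s(b)=\sigma(b)=\mathscr{L}(\g)$. Since $|\g'|\ge v_\g$ and, by the $C^1$-closeness \eqref{distance} together with $\varepsilon\le v_\g/2$, one has $|\b'|\ge v_\g/2>0$ on $[a,b]$, both functions are $C^1$-diffeomorphisms. Denote their inverses by $t:[0,\mathscr{L}(\g)]\to[a,b]$ and $\tau:[0,\mathscr{L}(\g)]\to[a,b]$, and set $\Gamma(s):=\g(t(s))$, $B(s):=\b(\tau(s))$, which are the arclength reparametrizations.

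The crucial preliminary estimate is on $|t(s)-\tau(s)|$. For fixed $s$, the equality $\sigma(\tau(s))=s(t(s))$ rewrites as $\int_a^{\tau}(|\b'|-|\g'|)\,du=\int_\tau^t|\g'|\,du$. The right-hand side is bounded below by $v_\g|t-\tau|$ while the left-hand side is bounded above by $(b-a)\varepsilon$ using \eqref{distance}, so $|t-\tau|\le (b-a)\varepsilon/v_\g$. This is the single mechanism that lets me convert a $C^1$-distance in the $t$-variable into control of the arclength reparametrization.

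With that in hand, the two estimates \eqref{B9A} become routine. For the $C^0$-part I split $|\Gamma(s)-B(s)|=|\g(t)-\b(\tau)|\le|\g(t)-\g(\tau)|+|\g(\tau)-\b(\tau)|\le \omega_\g(|t-\tau|)+\varepsilon$. For the $C^1$-part I use $\Gamma'(s)=\g'(t)/|\g'(t)|$ and $B'(s)=\b'(\tau)/|\b'(\tau)|$, then insert the intermediate vector $\g'(\tau)/|\g'(\tau)|$ and estimate both the change of direction (via $\omega_{\g'}$) and the change of normalization (via $|\,|\g'|-|\b'|\,|\le \varepsilon$ divided by $v_\g$ on both sides). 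Collecting these bounds with the estimate on $|t-\tau|$ yields the stated constant.

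The main obstacle is not analytic but bookkeeping: one must carefully track which quantities are controlled by $\omega_\g$, by $\omega_{\g'}$, by $\varepsilon$, and by $1/v_\g$, and ensure the $\varepsilon\le v_\g/2$ hypothesis is invoked exactly where needed to keep $|\b'|$ away from zero. Once the $|t-\tau|$-bound is established, everything else is triangle-inequality manipulations with the standard identities for derivatives of unit tangent vectors.
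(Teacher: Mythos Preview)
Your proposal is correct and follows essentially the same approach as the paper's proof: the same arclength functions $s(\cdot)$ and $\sigma(\cdot)$, the same key identity $\int_a^{\tau}(|\beta'|-|\gamma'|)\,du=\int_\tau^t|\gamma'|\,du$ yielding $|t-\tau|\le (b-a)\varepsilon/v_\gamma$, and the same triangle-inequality splits for the $C^0$- and $C^1$-parts (the paper inserts $\gamma'(\tau)/|\gamma'(\tau)|$ exactly as you describe). No substantive differences.
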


\section{Auxiliary results in the $\Gamma$-convergence framework}
\label{app:C}
{\it Proof of Lemma \ref{limsup_approx}.}\,
For a given sequence $(x^m)_m\subset X$ with properties (i) and (ii), let 
$\left(x^m_n\right)_{n \in \N}$ be a 
sequence satisfying (iii). We define the neighborhoods
\begin{align}
\label{def_Um}
U_m(x)\equiv U_m:=\big\{y \in X \; \colon \; d(x,y)<d\left(x,x^m\right)+
\textstyle\frac{1}{m} \big\}\quad\Fo m\in\N,
\end{align}
and the index sets
\begin{align*}
J_n(x)\equiv J_n
:=\big\{m \in \N \; \colon \; x^m_n \in U_m \text{ and } \mathcal{F}_n\left(x^m_n\right)\leq \mathcal{F}\left(x^m\right)+
\textstyle
\frac{1}{m} \big\}\quad\Fo n\in\N.
\end{align*}
For every $n \in \N$, we then set 
\begin{align*}
m_n:=\begin{cases}
\max \;J_n, &\text{if } J_n\neq \emptyset \text{ is bounded}\\
\min \; \left(J_n \cap \left\{m \in \N \; | \; m \geq n \right\}\right), &\text{if } J_n \text{ is unbounded}\\
1, &\text{if } J_n = \emptyset
\end{cases}
\end{align*}
{\bf Claim:  $\lim_{n\to \infty} m_n=\infty$.}\,
For the proof of this claim we distinguish three cases.

\textbf{Case 1:} There exists an index $N \in \N$ 
such that $J_n \neq \emptyset$ is bounded for all $n \geq N$. 
Suppose for contradiction that 
there exists a constant $c>0$ and a subsequence $k(n) \to \infty$ as 
$n \to \infty$ such that $m_{k(n)} \leq c$ for all $n \geq N$.
Thus, 
\begin{align}
\label{assumptions_contra}
\mathcal{F}_{k(n)}(x^m_{k(n)}) > \mathcal{F}(x^m)\textstyle
+ \frac{1}{m} \quad \text{or} \quad x^m_{k(n)} \notin U_m\quad\Foa m\ge c,\,
n\ge N.
\end{align}
If $x^{m}_{k(n)} \notin U_{m}$ holds for any $m \geq c$, we deduce 
from \eqref{def_Um} that $
d(x,x^{m}_{k(n)} )\geq d(x,x^{m}) + \frac{1}{m},$ and 
therefore, $
d(x^{m}_{k(n)},x^{m}) \geq d(x,x^{m}_{k(n)} ) - 
d(x,x^{m})  = \frac{1}{m}.$
This contradicts $d(x^{m}_{k(n)},x^{m}) \to 0$ as $n \to \infty$ in 
assumption (iii). Thus, we have  $x^{m}_{k(n)} \in U_{m}$ 
for all $m \geq c$ and consequently 
$\mathcal{F}_{k(n)}(x^{m}_{k(n)})
> \mathcal{F}(x^{m}) + \frac{1}{m}$ in 
\eqref{assumptions_contra} must hold. By assumption 
(iii) we have
$
\limsup_{n \to \infty} \mathcal{F}_n(x^{m}_n)\leq 
\mathcal{F}(x^{m})$, or equivalently $\inf_{n \in \N} \sup_{k \geq n}
\mathcal{F}_k(x^{m}_k) \leq \mathcal{F}(x^{m}).$
By the definition of the infimum, there exists an
index $n_0=n_0(m) \in \N$ such that
\begin{align*}
\textstyle\sup_{k \geq n_0} \mathcal{F}_k\left(x^{m}_k\right) \leq \inf_{n \in \N} \sup_{k \geq n} \mathcal{F}_k\left(x^{m}_k\right) + \frac{1}{m} \leq \mathcal{F}\left(x^{m}\right)+\frac{1}{m}
\end{align*}
and thus
$
\mathcal{F}_k(x^{m}_k) \leq \mathcal{F}(x^{m})+\frac{1}{m}$
holds for all $k \geq n_0$, which contradicts the first assumption in 
\eqref{assumptions_contra}. Hence, we conclude that the assumption was wrong and $\lim_{n \to \infty} m_n = \infty$ must hold in this case.\\
\textbf{Case 2:} $J_n=\emptyset$ for infinitely many $n\in\N$.
Then for these $n$ we have, similarly as in \eqref{assumptions_contra},
either $\mathcal{F}_n(x_n^m)>\mathcal{F}(x^m)+\frac1m$ or $x^m_n\in
U_m$ for all $m$. Exactly as 
in Case 1 both options lead to contradictive
statements, which rules out Case 2.

\textbf{Case 3:} There exists a subsequence $n_l$ such that $J_{n_l}$ is unbounded. By definition of $J_{n_l}$, we get $m_{n_l} \geq n_l$ and thus $m_{n_l} \to \infty$ as $l \to \infty$.
Every other subsequence of $m_n$ can then be handled as in case 1. 
By the subsequence principle we get 
$m_n \to \infty$ as $n$ tends to infinity, which proves the claim.

We now define $y_n :=x^{m_n}_n$ for all $n \in \N$. By the definition of $m_n$, we get $y_n \in U_{m_n}$ and thus
$
0\leq d(x,y_n)<d(x,x^{m_n})+\frac{1}{m_n}
$
for every $n \in \N$. Applying $m_n \to \infty$ as $n \to \infty$ 
and assumption (i) we then deduce $d\left(x,y_n\right) \to 0$ as $n \to \infty$. Furthermore, we estimate
\begin{align*}
\limsup_{n \to \infty} \mathcal{F}_n\left(y_n\right) 
= \limsup_{n \to \infty} \mathcal{F}_n\left(x^{m_n}_n\right) 
\leq \limsup_{n \to \infty} \big(\mathcal{F}\left(x^{m_n}\right){
\textstyle + \frac{1}{m_n}} \big)\bleq[\textnormal{(ii)}] \mathcal{F}(x).\hfill \qquad\Box 
\end{align*}

{\it Proof of Lemma \ref{konv_minimierer}.}\,
For $y\in Y$ and $y_n\to y$ with $\limsup_{n\to\infty}\mathcal{F}_n(y_n)
\le\mathcal{F}(y)$ one estimates
$$
\mathcal{F}(z)\le\liminf_{n\to\infty}\mathcal{F}_n(z_n)=
\liminf_{n\to\infty}\inf_X\mathcal{F}_n\le
\liminf_{n\to\infty}\mathcal{F}_n(y_n)\le\mathcal{F}(y).
$$
Taking the infimum over all $y\in Y$ on the right-hand side yields
the desired inequality, from which the last two equations
follow as claimed if $z\in Y$.
\hfill $\Box$

\begin{lemma}
\label{f_inf_vertauschen}
For any continuous non-decreasing map 
$g : (0,\infty) \to \R$ and any set $A \subset (0,\infty)$ one
has $\inf(g(A))=g(\inf(A))$ and $\sup(g(A))=g(\sup(A))$.
\begin{proof}
For every $a \in A$ it holds that $a \geq \inf(A)$. Since $g$ is 
non-decreasing, we have $g(a)\geq g(\inf(A))$, which yields $\inf(g(A))\geq g(\inf(A))$. For any $\varepsilon>0$ there exists  
$x \in A$ such that $x<\inf(A)+\varepsilon$. Then
\begin{align*}
g(\inf(A))\leq \inf(g(A))\leq g(x)\leq g(\inf(A)+\varepsilon)\quad\Foa
\varepsilon>0.
\end{align*}
Sending $\varepsilon$ to zero, we obtain by continuity of $g$ that 
$g(\inf(A))= \inf(g(A))$. In the same manner, we prove that $\sup(g(A))=g(\sup(A))$.
\end{proof}
\end{lemma}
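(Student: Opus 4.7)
The plan is to prove both identities by establishing two-sided inequalities. I will give full details only for the infimum statement; the supremum statement follows from an entirely symmetric argument (or by applying the infimum result to the non-decreasing continuous function $x \mapsto -g(-x)$ after a change of variable, if one prefers a one-line reduction, though a direct repetition is cleaner here).

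First I would note the easy direction $\inf g(A) \geq g(\inf A)$. For every $a \in A$ one has $a \geq \inf A$, hence by monotonicity of $g$ also $g(a) \geq g(\inf A)$. Thus $g(\inf A)$ is a lower bound for the set $g(A)$, and passing to the infimum yields the claimed inequality. This uses nothing but monotonicity of $g$.

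For the reverse inequality $\inf g(A) \leq g(\inf A)$, I would exploit the defining property of the infimum: for every $\varepsilon > 0$ there exists $x_\varepsilon \in A$ with $x_\varepsilon < \inf A + \varepsilon$. By monotonicity $g(x_\varepsilon) \leq g(\inf A + \varepsilon)$, and by definition of the infimum of $g(A)$ we also have $\inf g(A) \leq g(x_\varepsilon)$. Combining these,
\[
\inf g(A) \;\leq\; g(\inf A + \varepsilon) \qquad \textnormal{for every } \varepsilon > 0.
\]
Letting $\varepsilon \to 0^+$ and invoking continuity of $g$ at the point $\inf A$ (which is assumed to lie in the domain $(0,\infty)$) yields $\inf g(A) \leq g(\inf A)$, completing the proof.

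The only potential obstacle is a domain issue: $g$ is defined on $(0,\infty)$, so the statement tacitly requires that $\inf A$ and $\sup A$ both lie in $(0,\infty)$; equivalently, that $A$ is bounded and bounded away from $0$. In the applications in the main text (Lemma \ref{gamma_conv_discrete_cont}, where $g(x) = x^{1/q}$ is applied to finite positive energy values) this is automatic, so no extra care beyond what is sketched above is needed.
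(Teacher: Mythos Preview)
Your proof is correct and follows essentially the same approach as the paper: both establish $\inf g(A)\geq g(\inf A)$ from monotonicity and the reverse inequality by choosing $x_\varepsilon\in A$ with $x_\varepsilon<\inf A+\varepsilon$, then passing to the limit using continuity. Your additional remark that the statement tacitly requires $\inf A,\sup A\in(0,\infty)$ is a valid observation the paper leaves implicit.
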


\begin{lemma}
\label{gamma_konv_verkettung}
Let $(X,d)$ be a metric space, $\mathcal{F}_n,\mathcal{F} : 
X \to (0,\infty)$ and $\mathcal{F}_n 
\stackrel{\Gamma}{\underset{n \to \infty}{\longrightarrow}} \mathcal{F}$.
Furthermore, let $g : (0,\infty) \to \R$ be a continuous and monotone increasing map. Then $g \circ \mathcal{F}_n \stackrel{\Gamma}{\underset{n \to \infty}{\longrightarrow}} g \circ \mathcal{F}$.
\begin{proof}
First, we prove the liminf inequality. Let $x \in X$ 
and $x_n \to x$ for $n \to \infty$. By assumption, we have 
$\mathcal{F}(x)\leq \liminf_{n \to \infty} \mathcal{F}_n(x_n)$. This yields with the monotonicity and continuity of $g$ and Lemma \ref{f_inf_vertauschen}
\begin{align*}
g(\mathcal{F}(x))&\leq g(\liminf_{n \to \infty} 
\mathcal{F}_n(x_n))=g(\lim_{n \to \infty} \inf_{m \geq n} 
\mathcal{F}_m(x_m))=\lim_{n \to \infty}g( \inf_{m \geq n} 
\mathcal{F}_m(x_m))\\
&=\lim_{n \to \infty}\inf_{m \geq n} g( 
\mathcal{F}_m(x_m))=\liminf_{n \to \infty} g(\mathcal{F}_n(x_n)).
\end{align*}
For the limsup inequality fix $x\in X$. By assumption, there 
exists a recovery sequence $x_n \to x$ as $n \to \infty$ such that 
$\mathcal{F}(x)\geq \limsup_{n \to \infty}\mathcal{F}_n(x_n)$. 
Then again, we obtain
\begin{align*}
g(\mathcal{F}(x))&\geq g(\limsup_{n \to \infty} 
\mathcal{F}_n(x_n))=g(\lim_{n \to \infty} \sup_{m \geq n} 
\mathcal{F}_m(x_m))=\lim_{n \to \infty}g( \sup_{m \geq n} 
\mathcal{F}_m(x_m))\\
&=\lim_{n \to \infty}\sup_{m \geq n} g( 
\mathcal{F}_m(x_m))=\limsup_{n \to \infty} g(\mathcal{F}_n(x_n)).
\end{align*}
\end{proof}
\end{lemma}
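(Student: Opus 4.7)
The plan is to verify the two defining inequalities of $\Gamma$-convergence (liminf and limsup) for the composition $g\circ\mathcal{F}_n$ separately, in each case reducing to the corresponding inequality for $\mathcal{F}_n$ and then commuting $g$ past the relevant $\liminf$ or $\limsup$. Since $g$ is only assumed continuous and non-decreasing, I cannot invert it, so I must be careful that the commutation goes in the right direction.

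For the liminf inequality, I would pick an arbitrary $x\in X$ together with an arbitrary sequence $x_n\to x$. The hypothesis $\mathcal{F}_n\stackrel{\Gamma}{\to}\mathcal{F}$ gives $\mathcal{F}(x)\leq \liminf_{n\to\infty}\mathcal{F}_n(x_n)$, and applying the non-decreasing $g$ preserves this inequality. It then remains to identify $g(\liminf_n\mathcal{F}_n(x_n))$ with $\liminf_n g(\mathcal{F}_n(x_n))$. Writing $\liminf_n a_n=\lim_{n\to\infty}\inf_{m\geq n}a_m$, Lemma~\ref{f_inf_vertauschen} lets me pull $g$ inside the $\inf_{m\geq n}$ for each fixed $n$, and the continuity of $g$ lets me pull it through the outer (monotone) limit in $n$. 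For the limsup inequality the approach is dual: for a given $x\in X$, I take a recovery sequence $x_n\to x$ for $\mathcal{F}$ supplied by the $\Gamma$-convergence, apply $g$, and then use the $\sup$-part of Lemma~\ref{f_inf_vertauschen} together with continuity of $g$ to exchange $g$ with $\limsup_n=\lim_{n\to\infty}\sup_{m\geq n}$, obtaining $\limsup_n g(\mathcal{F}_n(x_n))\leq g(\mathcal{F}(x))$. The same sequence $x_n$ thus serves as a recovery sequence for $g\circ\mathcal{F}_n$.

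There is no substantial obstacle here: the lemma is essentially a bookkeeping exercise once the commutation of $g$ with $\inf/\sup$ (Lemma~\ref{f_inf_vertauschen}) and with monotone sequential limits (by continuity) is available. The one thing worth double-checking is that the $\inf_{m\geq n}\mathcal{F}_m(x_m)$ and $\sup_{m\geq n}\mathcal{F}_m(x_m)$ actually take values in the domain $(0,\infty)$ of $g$, which is automatic because $\mathcal{F}_m>0$ and $g$ is defined on all of $(0,\infty)$ with no restriction on behaviour at $\infty$ other than what continuity of a map $(0,\infty)\to\R$ imposes; the outer limits in $n$ may produce values in $(0,\infty]$, but the identity $\lim_n g(b_n)=g(\lim_n b_n)$ still holds by continuity on $(0,\infty)$ and monotone convergence in the boundary case.
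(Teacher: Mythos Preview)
Your proposal is correct and follows essentially the same route as the paper's own proof: verify the liminf and limsup inequalities separately by applying the monotone $g$ to the corresponding inequality for $\mathcal{F}_n$, then use Lemma~\ref{f_inf_vertauschen} to commute $g$ with $\inf_{m\ge n}$ (resp.\ $\sup_{m\ge n}$) and continuity of $g$ to commute with the outer limit in $n$. Your additional remark about the boundary case at $\infty$ (and implicitly at $0$) is a point the paper's proof leaves unaddressed, so if anything you are slightly more careful.
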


%\addtocontents{toc}{\SkipTocEntry}
 \section*{Acknowledgments}
We are indebted to Henrik Schumacher  for
 fruitful discussions on possible discretizations of
 knot energies. The first author is supported
 by {the DFG}-Gra\-du\-ier\-ten\-kolleg \emph{Energy, Entropy, and 
 Dissipative Dynamics (EDDy)},  project no. 320021702/GRK2326.
The second 
author's work was partially funded by 
DFG Grant no.\@ Mo 966/7-1 
\emph{Geometric curvature functionals: 
energy landscape and discrete methods} 
(project
no. 282535003),
and by the Excellence Initiative of the 
German federal and state governments.

\addtocontents{toc}{\SkipTocEntry}
%\section*{References}
\bibliographystyle{acm}
\bibliography{../bib-files/refs-bookproj}
%\bibliography{refs-bookproj}
%\bibliography{Literaturverzeichnis_eng}

%\fancyhead[R]{\textit{Bibliography}}
%\fancyhead[L]{}
%\bibliographystyle{gerplain}
%\bibliographystyle{bababbrv-lf}
%\bibliographystyle{abbrv}
%\renewcommand{\refname}{Bibliography}
%\bibliography{Literaturverzeichnis_eng}

%\listoftodos

\end{document}